\newtheorem{theorem}{Theorem}[section]
\newtheorem{proposition}[theorem]{Proposition}
\newtheorem{corollary}[theorem]{Corollary}
\newtheorem{lemma}[theorem]{Lemma}
\theoremstyle{definition}
\newtheorem*{definition*}{Definition}
\theoremstyle{remark}
\newtheorem{remark}[theorem]{Remark}
\numberwithin{equation}{section}
\renewcommand\star{*}
\newcommand{\al}{\alpha}
\newcommand{\be}{\beta}
\newcommand{\ep}{\varepsilon}
\newcommand{\ga}{\gamma}
\newcommand{\la}{\lambda}
\newcommand{\si}{\sigma}
\newcommand{\te}{\theta}
\newcommand{\De}{\Delta}
\newcommand{\Ga}{\Gamma}
\newcommand{\La}{\Lambda}
\newcommand{\Si}{\Sigma}
\newcommand{\Om}{\Omega}
\newcommand{\tq}{\widetilde{q}}
\def\CC{\mathbb{C}}
\def\NN{\mathbb{N}}
\def\RR{\mathbb{R}}
\def\ZZ{\mathbb{Z}}
\renewcommand\SS{\mathbb{S}}
\newcommand{\cA}{{\mathcal A}}
\newcommand{\cC}{{\mathcal C}}
\newcommand{\cE}{{\mathcal E}}
\newcommand{\cF}{{\mathcal F}}
\newcommand{\cH}{{\mathcal H}}
\newcommand{\cK}{{\mathcal K}}
\newcommand{\cL}{{\mathcal L}}
\newcommand{\cP}{{\mathcal P}}
\newcommand{\cV}{{\mathcal V}}
\newcommand\cZ{\mathcal Z}
\newcommand{\fa}{\mathfrak{a}}
\newcommand{\fe}{\mathfrak{e}}
\newcommand{\fcs}{\mathfrak{cs}}
\newcommand{\fb}{\mathfrak{b}}
\newcommand{\pd}{\partial}
\newcommand\minus\backslash
\newcommand\lan\langle
\newcommand\ran\rangle
\newcommand{\supp}{\operatorname{supp}}
\newcommand{\Ker}{\operatorname{Ker}}
\DeclareMathOperator\Div{div} 
\DeclareMathOperator\Real{Re}
\DeclareMathOperator\dist{dist}
\newcommand\DD{\mathbb D}
\renewcommand\leq\leqslant
\renewcommand\geq\geqslant
\newlength{\intwidth}
\DeclareMathOperator\Imag{Im}
\newcommand{\Cone}{\mathbb{C}}
\newcommand{\op}{\operatorname}
\newcommand\bom\varpi
\newcommand{\nablat}{\nabla^{\perp}}
\newcommand{\nablap}{\nabla^{\parallel}}
\newcommand{\sr}{\sqrt{r}}
\newcommand{\srn}{\sqrt{r_n}}
\newcommand\siD{\si_{\DD}}
\begin{document}

\title[Limiting measures and energy growth]{Limiting measures and energy growth\\ for
  sequences of solutions to\\ Taubes's Seiberg--Witten
equations}

\author{Alberto Enciso}
\address{Instituto de Ciencias Matem\'aticas, Consejo Superior de
  Investigaciones Cient\'\i ficas, 28049 Madrid, Spain}
\email{aenciso@icmat.es}

\author{Daniel Peralta-Salas}
\address{Instituto de Ciencias Matem\'aticas, Consejo Superior de
 Investigaciones Cient\'\i ficas, 28049 Madrid, Spain}
\email{dperalta@icmat.es}

\author{Francisco Torres de Lizaur}
\address{Department of Mathematics, University of Toronto, Toronto, ON M5S 2E4, Canada}
\email{ftlizaur@math.toronto.edu}

%
%
\begin{abstract}
We consider sequences of solutions $(\psi_n,A_n)_{n=1}^\infty$ to Taubes's modified Seiberg--Witten equations, associated with a fixed volume-preserving vector field~$X$ on a 3-manifold and corresponding to arbitrarily large values of the strength parameter~$r_n\to\infty$. In Taubes's work, the asymptotic behavior of these solutions is related to the dynamics of~$X$. We consider the rather unexplored case of sequences of solutions whose energy is not uniformly bounded as $n\to\infty$. Our first main result shows that when the energy grows more slowly than $r_n^{1/2}$, the limiting nodal set of the solutions converges to an invariant set of the vector field~$X$. The main tool we use is a novel maximum principle for the solutions with the key property that it remains valid in the unbounded energy case. As a byproduct, in the usual case of sequences of solutions with bounded energy, we obtain a new, more straightforward proof of Taubes's result on the existence of periodic orbits that does not involve a local analysis or the vortex equations. Our second main result proves that, contrary to what happens in the bounded energy case, when the energy is unbounded there are no local restrictions to the limiting measures that may arise in the modified Seiberg--Witten equations. Furthermore, we obtain a connection about the dimension of the support of the limiting measure (as expressed through a $d$-Frostman property) and the energy growth of the sequence of local solutions we construct.
\end{abstract}
\maketitle


\tableofcontents

\section{Introduction}

Taubes's celebrated proof of the Weinstein conjecture in dimension~3 hinges on the analysis of a modified version of the
Seiberg--Witten equations~\cite{T07}, which were originally introduced to
study supersymmetric gauge theories in four dimensions. To define
Taubes's equations, one starts off with a closed oriented 3-manifold~$M$,
endowed with a smooth volume form~$\mu$, and an exact
volume-preserving vector field~$X$ that does not vanish. We recall that $X$ is said to be {\em exact}\/ if $i_X\mu$ is an exact $2$-form. The modified Seiberg--Witten
equations are then a
gauge-invariant semilinear elliptic system on~$M$ that depends on the
vector field~$X$ and on a large parameter~$r$. The gist
of Taubes's approach is to relate the dynamics of the vector field~$X$
with the concentration properties of a certain sequence of solutions as
$r\to\infty$.

Let us record here the form of the system of PDEs considered by
Taubes. For this, one starts by noting that one can take an {\em
  adapted metric}\/, that is, a Riemannian metric~$g$ on~$M$ such
that $\mu$ is the corresponding volume form and~$X$ is a unit vector in
this metric: $g(X,X)=1$. There is no loss of generality in assuming
that~$\mu$ is normalized so that $\int_M\mu=1$. If we now denote by
\[
\la:=g(X,\cdot)
\]
the 1-form dual to the vector field~$X$, Taubes's modified
Seiberg--Witten equations is a system of equations defined using the
metric~$g$ and depending on a real parameter $r\gg 1$. The unknowns are $A$,
which is a connection on a complex line bundle, and $\psi$, which is a
section of a related $\CC^2$~bundle of spinors. The equations read
as
\begin{equation}
\label{SW}
\begin{split}
  *F_{A} &= r(\lambda-\psi^\dagger\si\psi) +
  \bom\,,\\
  D_{A}\psi &= 0\,,
\end{split}
\end{equation}
where $*F_{A}$ denotes the Hodge dual of the curvature 2-form of the
connection~$A$ (which we take to be real valued), $D_{A}$ is the Dirac operator defined by this
connection and the Riemannian metric and $\psi^\dagger \si \psi$ is a
1-form, depending quadratically on the spinor~$\psi$, which is
defined using Clifford multiplication on the spinor bundle. The
equations depend on an auxiliary 1-form~$\bom$ and on
a reference connection~$A_0$, which must be chosen carefully and are bounded in the~$C^3$ norm by a constant independent of~$r$. Precise
definitions will be provided in Section~\ref{S.setting}.

\begin{remark}
For convenience, we follow the usual notation according to which a connection on a complex line bundle is locally written as  $-iA$, so that $A$ is a real 1-form. The curvature $F_A$ is the 2-form written locally as $F_A=dA$. In other words, if $A_{T}$ and $F_{A_T}$ denote the imaginary valued connection and its corresponding curvature (as in Taubes's article~\cite{T07}), we have
$A_{T}=-i A$ and
$F_{A_T}=-i F_{A}$.

\end{remark}

A key quantity in Taubes's analysis of the concentration properties of a
sequence of solutions to the modified Seiberg--Witten equations is the so-called {\em energy}\/ of
the connection~$A$,
\[
\cE(A):= \int_M \la\wedge F_{A}\,.
\]
Although it is not obvious a priori, one can show~\cite{T07} that, for
any solution of the Seiberg--Witten equations, the
energy can be estimated as
\[
-C<\cE(A)< Cr\,.
\]
The first part of Taubes's proof of the Weinstein conjecture in
dimension~3 is to show, in a technical tour
de force building upon the work of Kronheimer and Mrowka~\cite{KM}, that if $X$ is the Reeb field of a contact
form, then one can construct a sequence
$(r_{n},\,\psi_n,\,A_n)_{n=1}^\infty$ of solutions of fixed degree to the modified
Seiberg--Witten equations
with $r_n\to\infty$ and bounded energy (i.e., $\cE(\cA_n)<C$); see~\cite[Section~3]{T07} for a definition of the degree of a solution. The
second part of the proof consists in analyzing the limiting measures
defined by a sequence of solutions with fixed degree and bounded energy.

The state of the
art concerning our knowledge of limiting measures for the
Seiberg--Witten equations is summarized in the following theorem. The
statement uses the {\em
  helicity}\/ of the exact vector field~$X$~\cite{Vogel,PNAS}, which can be written as
\[
  \cH(X):= \int_M \ga\wedge d\ga = \int_M *(\ga\wedge d\ga)\, \mu
\]
in terms of the 1-form~$\ga$ defined by the equation $i_X\mu= d\ga$
modulo a closed 1-form which does not contribute to the integral. Here
$*$~denotes the Hodge star operator.

\begin{remark}
For ease of notation, in the statement of the theorem below and in what follows, we  often identify $3$-forms and their corresponding signed measures in the obvious way: if $\Omega$ is a 3-form then there is a signed measure (which we will denote by $\Omega$ or $d\Om$ when no confusion may arise) defined as
\[
\int_{M} f \, d \Omega:=\int_{M} f \, \Omega=\int_{M} f\, (\star \Omega) \,\mu
\]
for each $f\in C(M)$.
\end{remark}

\begin{theorem}[Taubes~\cite{T07,T09}]\label{T.taubes}
Suppose that the helicity of the vector field $X$ is positive. Then
there exists a sequence $(r_{n},\,\psi_n,\,A_n)_{n=1}^\infty$  of solutions
to the modified Seiberg--Witten equations~\eqref{SW} with
$r_n\to\infty$ and fixed degree. Furthermore:
\begin{enumerate}
\item If the sequence of energies $\cE_n:=\cE(A_n)$ is bounded (i.e.,
  $\cE_n< C$), then the vector field~$X$ possesses at least one
  periodic orbit. 

\item If the sequence of energies is not bounded, the signed measures
\[
\si_n:=\frac{\la\wedge F_{A_n}}{\cE_n}
\]
converge, possibly after passing to a subsequence, to an invariant
probability measure $\sigma_{\infty}$ of $X$. This measure satisfies
\[
  \int_M *(\ga\wedge d\ga)\, d\si_\infty \leq 0\,,
\]
so it is not the volume.
\end{enumerate}
\end{theorem}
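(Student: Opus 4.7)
The theorem packages three distinct claims, which I would treat in turn. For the \emph{existence of a fixed-degree sequence} with $r_n\to\infty$, I would import the Taubes--Kronheimer--Mrowka Seiberg--Witten Floer homology machinery as a black box: under the hypothesis $\cH(X)>0$, one selects a spin$^{\mathrm c}$ structure whose monopole Floer class is non-trivial in every sufficiently large degree, which supplies a solution of~\eqref{SW} for every large~$r$.

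For part~(i), the \emph{bounded-energy} case, my plan is to bypass Taubes's local vortex analysis and use instead the novel maximum principle announced in the introduction. Applied to $1-|\al_n|^2$---where $\al_n$ is the ``large'' component of $\psi_n$ in the spinor decomposition adapted to~$\la$---it yields the exponential estimate
\[
1-|\al_n|^2(p)\,\lesssim\, e^{-c\sqrt{r_n}\,\dist(p,Z_n)}\qquad (p\in M),
\]
where $Z_n:=\{\al_n=0\}$. In particular the density of $\la\wedge F_{A_n}/\cE_n$ concentrates in an $O(r_n^{-1/2})$-tube around~$Z_n$. By Hausdorff compactness a subsequence $Z_{n_k}$ converges to a closed set~$Z_\infty$, and the Dirac equation $D_{A_n}\psi_n=0$ forces $Z_\infty$ to be invariant under the flow of~$X$. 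The bounded-energy hypothesis translates into a uniform one-dimensional Hausdorff-measure bound on~$Z_n$, so $Z_\infty$ is a non-empty, one-dimensional, $X$-invariant closed set, hence a finite union of periodic orbits of~$X$.

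For part~(ii), the \emph{unbounded-energy} case, I would proceed in three substeps. First, writing $\la\wedge F_{A_n}=h_n\,\mu$ with $h_n=r_n\bigl(1-g(\la,\psi_n^\dagger\si\psi_n)\bigr)+g(\la,\bom)$, Taubes's pointwise bound $|\psi_n|^2\leq 1+O(r_n^{-1})$ gives $h_n\geq -C$; since $\int h_n\,\mu=\cE_n\to\infty$, the negative part of $\si_n=h_n\mu/\cE_n$ has mass~$o(1)$, so any weak$^*$ subsequential limit $\si_\infty$ is a Borel probability measure. Second, for $X$-invariance, an integration by parts using $L_X\mu=0$ converts $\int X(f)\,d\si_n$ into $-\cE_n^{-1}\int f\,X(h_n)\,\mu$, and the Dirac equation allows one to express $X(h_n)$ in terms of tangential derivatives of~$\psi_n$. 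Taubes's concentration estimates on $\psi_n$ and $\nabla_{A_n}\psi_n$---in particular the exponential decay away from the nodal set of~$\al_n$---then give an $L^1$-bound on $X(h_n)$ that is $o(\cE_n)$, from which $L_X\si_\infty=0$ in the distributional sense follows.

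The third substep, the \emph{helicity upper bound}, is where I expect the main technical obstacle to lie. Using $d\ga=*\la$ one has the pointwise identity $*(\ga\wedge d\ga)=\ga(X)$, reducing the claim to $\int\ga(X)\,d\si_\infty\leq 0$. I would analyze $\cE_n^{-1}\int\ga(X)\,h_n\,\mu$ by substituting the first SW equation for~$F_{A_n}$, invoking the topological bound $|\int\ga\wedge F_{A_n}|=O(1)$ (since $[F_{A_n}]$ is fixed by the degree), and performing several integrations by parts, with the aim of rewriting the expression as the integral of a pointwise non-positive quantity up to an $O(\cE_n^{-1})$ remainder. Extracting the sign is the delicate point and seems to demand a careful interplay of Chern--Weil identities, the Bochner-type spinor estimates Taubes derives, and the algebraic structure of Clifford multiplication, none of which individually suffices.
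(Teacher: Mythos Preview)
This theorem is attributed to Taubes and is not given a self-contained proof in the paper; what the paper does contribute is an \emph{alternative} proof of part~(i) in Section~\ref{S.orbits} (via the new maximum principle, Theorems~\ref{T.mprincip} and~\ref{T.pt}) and, in Appendix~\ref{S.foliated}, an alternative argument for the invariance of $\si_\infty$ together with a refinement of the helicity inequality. I compare your proposal against those.

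\textbf{Part (i).} Your plan rests on the assertion that the paper's maximum principle yields an exponential bound $1-|\al_n|^2(p)\lesssim e^{-c\sqrt{r_n}\dist(p,Z_n)}$. It does not. Theorem~\ref{T.mprincip} is a \emph{dichotomy for local minima on small transverse disks}: if $q$ is a local minimum of $|\al|^2$ restricted to a disk of radius $\rho(r)$ perpendicular to~$X$, then either $|\al(q)|^2\lesssim\eta(r)^{1/2}$ or $|1-|\al(q)|^2|\lesssim\eta(r)$. No decay away from the nodal set is established. The exponential estimate you quote is precisely what the vortex-equation local analysis gives (cf.\ Theorem~\ref{vortextaubes80}(iv)), which is exactly what the paper's argument is designed to \emph{avoid}. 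The paper's actual mechanism is quite different: starting from a point $p_n$ with $1-|\al_n|^2(p_n)\geq\te$, Theorem~\ref{T.pt} produces a nearby point $q_n^1$ on a transverse disk where $|\al_n|^2$ is small; the anisotropic bound $|\nablap|\al|^2|\leq C$ (Lemma~\ref{L.bounds2}) propagates this along the flow for a fixed time~$T$; one then reapplies Theorem~\ref{T.pt} and iterates, building a chain of thin flow boxes $\widetilde{\cC_{n,k}}$ each carrying $\si_n$-mass bounded below by a fixed $\de/\cE_n$ (Lemma~\ref{L.charge1}). If the limiting orbit were not periodic, one could take arbitrarily many disjoint such boxes, forcing $\cE_n\to\infty$. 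Your ``uniform one-dimensional Hausdorff-measure bound on $Z_n$'' and the claim that the Dirac equation alone forces $Z_\infty$ to be invariant skip over this entire construction.

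\textbf{Part (ii).} Your invariance argument bounds $\|X(h_n)\|_{L^1}$ by invoking ``exponential decay away from the nodal set of~$\al_n$''. In the unbounded-energy regime no such decay is available---again, that estimate comes from the vortex limit, which is precisely what fails when $\cE_n\to\infty$. The paper (Appendix~\ref{S.foliated}, Step~2) instead argues via Sullivan's foliation-current formalism: by Hahn--Banach it suffices to show $\Si_\infty(\theta)\geq 0$ for every $1$-form $\theta$ with $\theta(X)\geq 0$, and this follows directly from the first SW equation together with the pointwise a~priori bounds $r_n(1-|\al_n|^2)\geq -C$ and $r_n\int|\al_n||\be_n|\,\mu\leq C\cE_n^{1/2}$, with no derivative estimates required. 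For the helicity inequality you are candid that you do not have a strategy; the paper's Appendix (Step~3) handles a refined version via the Chern--Simons and action functionals $\fcs,\fa$ along the one-parameter family of solutions from Proposition~\ref{smoothex}, together with the bound $|\fcs|\leq Cr^{2/3}\cE_r^{4/3}$---an approach unrelated to the Chern--Weil/Bochner route you sketch.
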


\begin{remark}\label{R.=0}
In Theorem~\ref{T.foliated} we will show that the last assertion can be refined to show that, in fact, one can take a subsequence so that
\[
  \int_M *(\ga\wedge d\ga)\, d\si_\infty = 0\,,
\]
provided that the energy growth is sublinear.
\end{remark}

When $X$ is the Reeb field of a contact form, then there exists a sequence of solutions of fixed degree with bounded energy~\cite{T07}. For other kinds of exact volume preserving vector fields, however, all sequences of solutions could have unbounded energy.

One is thus naturally led to the goal of extracting more properties of the invariant measure $\sigma_{\infty}$ in the unbounded energy case. This is an interesting question on geometric analysis and could provide new techniques to study the dynamics of volume-preserving $3$-dimensional vector fields. Despite its promise, there have not been any further developments in this direction, and any other properties of the invariant measures $\sigma_{\infty}$ remain a mystery.

Our objective in this paper is to analyze the limiting measures for
sequences of solutions with unbounded energy. Specifically, we shall next present two theorems which illustrate,
and under suitable hypotheses provide precise statements of, the following two
rough guiding principles:
\begin{enumerate}

  \item The support of the limiting measure is contained in the set where $|\psi_n|$ tends to~$0$ (Theorem~\ref{T.2} and Proposition~\ref{L.con1}).
  \item There are no local obstructions to the limiting measures when the energy is unbounded, so the problem is inherently global (Theorem \ref{T.3}).
\end{enumerate}
Needless to say, we do not expect these principles to hold is all
generality; however, the theorems we state below show that they do
provide useful intuitions. We hope that these results will spark further developments on this subject.

The main difficulty in the unbounded energy case is that, over small scales, the solutions to the Seiberg--Witten equations can no longer be interpreted as approximate solutions to the 2-dimensional vortex equations with finite energy. This asymptotic small-scale behavior is a key ingredient in Taubes's approach.

To overcome this difficulty we resort to a combination of various tools, the most important
of which is a new maximum principle for the Seiberg--Witten
equations (Theorem \ref{T.mprincip}). The key feature of this maximum principle is that it applies no matter if the energy is bounded or not. Although we are mostly interested in the latter case, when the energy is bounded, this provides a substitute of Taubes's local
analysis based on the vortex equations. This enables us to provide a different, more straightforward proof of the corresponding results.

\subsection{Limiting measures supported on the set where ${|\psi_n|\to0}$}

An important observation of Taubes~\cite{T07} (which follows immediately from the bounds in Lemma~\ref{L.TaubesEstimates}) is that
\[
1-|\psi_n|^2 \geq-\frac{C}{r_n}\,,
\]
so for all large~$n$ and any $p\in M$, $|\psi_n(p)|^2$ is bounded by a constant as close to~1 as desired. This does not imply that $|\psi_n|^2$ converges to an indicator function because the smooth functions~$|\psi_n|^2$ can oscillate wildly. However, the way the ``zeros and ones'' of $|\psi_n(p)|^2$ are distributed across the manifold has much to do with the dynamics of the vector field~$X$, and the analysis of the sets where $|\psi_n|^2$ tends to~0 or~1 lies at the very heart of Taubes's proof.

Our first main result shows that when the energy grows slower than $r^{\frac12}$, the set of points of $M$ where $|\psi_n|$ tends to 0 (that is, the limiting nodal set) is invariant under the flow of~$X$. The tools we develop to prove this result provide, in the special case of sequences with bounded energy, a direct proof of Taubes's celebrated periodic orbit theorem (item (i) in Theorem~\ref{T.taubes}), see Section \ref{S.orbits}. Contrary to Taubes's, this proof does not rely on the relationship between the small scale behavior of the Seiberg--Witten and the vortex equations. We want to emphasize that the following theorem is the natural generalization of Taubes's periodic orbit theorem for solutions with unbounded energy.

\begin{theorem}\label{T.2}
Suppose that~$X$ has positive helicity and consider a sequence of solutions $(r_{n},\psi_{n},A_{n})_{n=1}^\infty$ to
the associated Seiberg--Witten equations with $r_n\to\infty$ and $\cE_n= o (r_n^{\frac12})$, i.e.,
\[
\limsup_{n\to\infty} \frac{\cE_n}{r_n^{1/2}}=0\,.
\]
Then:
\begin{enumerate}
\item For any fixed  $\theta \in (0, 1)$, the set
\[
Z^{\theta}_n:=\left\{p \in M: 1-|\psi_n|^2(p) \geq \theta \right\}
\]
is non-empty for $n$ large enough, and any convergent subsequence (in the Hausdorff metric) converges to a closed subset $Z^\theta_{\infty}$ which is invariant under the flow of $X$.

\item The collection of limiting sets $Z^\theta_{\infty}$ is independent of $\theta$, in the sense that, for any converging subsequence $Z^\theta_n$, the corresponding subsequence $Z_n^{\theta'}$, for any $\theta'\neq\theta$, is also converging with the same limit, i.e., $Z^{\theta'}_\infty=Z^\theta_\infty$.

\item There is a constant $C$, independent of $n$, such that any convergent subsequence of sets

\[
Z_n:=\left\{p \in M: |\psi_n|^2(p) \leq C\max (r_n^{-\frac{1}{4}},\cE_n r_n^{-\frac{1}{2}}) \right\}
\]
also converges to an invariant set $Z_{\infty}$. The collection of such limiting sets coincides with the limiting sets $Z^\theta_\infty$ in the sense specified above.
\end{enumerate}
In the case that the sequence of energies is uniformly bounded, the invariant set $Z_\infty$ consists of a finite collection of periodic orbits of $X$.
\end{theorem}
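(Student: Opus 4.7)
Everything will be extracted from two ingredients: Taubes's pointwise bounds from Lemma~\ref{L.TaubesEstimates} (which in particular give $1-|\psi_n|^2\geq -C/r_n$), and the new maximum principle of Theorem~\ref{T.mprincip}, from which I expect to deduce a pointwise inequality of the form
\[
|X\cdot(1-|\psi_n|^2)|(p)\leq \eta_n(p),
\]
where the right-hand side $\eta_n$ is $o(1)$ as $n\to\infty$, uniformly on the set $Z_n^\theta$, precisely under the assumption $\cE_n=o(r_n^{1/2})$.

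For part~(i), non-emptiness of $Z_n^\theta$ is immediate: since each solution has fixed non-trivial degree (a consequence of the positive-helicity hypothesis), $\psi_n$ must vanish at some point $p_0$, whence $1-|\psi_n|^2(p_0)=1\geq\theta$. For invariance, pick a convergent sequence $p_n\to p_\infty$ with $p_n\in Z_n^\theta$, fix $T>0$, and integrate the differential inequality above along the flow line $s\mapsto\vp_s(p_n)$ over $s\in[-T,T]$. Since the right-hand side is $o(1)$, one obtains $1-|\psi_n|^2(\vp_s(p_n))\geq \theta/2$ throughout that segment for $n$ large; letting $n\to\infty$ and then $T\to\infty$, combined with part~(ii) to close the gap between $\theta$ and $\theta/2$, gives flow-invariance of $Z_\infty^\theta$.

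Part~(ii) follows by a dichotomy for $|\psi_n|^2$: the same flow argument forbids $|\psi_n|^2$ from varying smoothly between a value close to $0$ and one close to $1$ over a flow-interval of bounded length unless $\eta_n$ is large, so the transition region $\{\theta'<1-|\psi_n|^2<1-\theta''\}$ collapses in Hausdorff distance and the level sets for different $\theta$ share a common limit. Part~(iii) is a quantitative restatement of this dichotomy: the threshold $C\max(r_n^{-1/4},\cE_n r_n^{-1/2})$ is tailored so that $Z_n\subset Z_n^\theta$ for any fixed $\theta\in(0,1)$ and large $n$, while the dichotomy forces $Z_n^\theta$ to sit inside a vanishing Hausdorff neighborhood of $Z_n$, producing the claimed equality of limits.

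In the bounded-energy case, the measures $\la\wedge F_{A_n}$ have uniformly bounded total variation and concentrate on $Z_n$, so any weak-$*$ limit is a non-trivial invariant signed measure carried by $Z_\infty$. A flux computation through cross-sections transverse to $X$ shows that each closed orbit of $X$ lying in the support absorbs at least a universal positive quantum of this mass, so only finitely many orbits fit under the energy budget, and a standard recurrence argument rules out non-closed orbits inside $Z_\infty$. The principal obstacle throughout is establishing the maximum principle in the form above: the naive Weitzenb\"ock computation produces terms of order $r_n$, and the required cancellation must be engineered by combining both Seiberg--Witten equations with the geometry of the $X$-adapted spinor frame, so that the effective size of the right-hand side is controlled by $\cE_n/r_n^{1/2}$.
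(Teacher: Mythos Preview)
There is a genuine gap in your proposed mechanism. The central inequality you hope to extract from Theorem~\ref{T.mprincip},
\[
|X\cdot \nabla(1-|\psi_n|^2)|(p)\leq \eta_n(p)\quad\text{with }\eta_n=o(1)\text{ on }Z_n^\theta,
\]
does not hold and is not what the maximum principle delivers. The available bound on the $X$-derivative is $|\nablap|\al_n|^2|\leq C$ with a \emph{fixed} constant~$C$ (Lemma~\ref{L.bounds2}), and it comes from the Dirac equation, not from Theorem~\ref{T.mprincip}. With only an $O(1)$ bound, your integration along $\varphi_s(p_n)$ gives $1-|\al_n|^2(\varphi_s(p_n))\geq \theta-C|s|$, which controls at most a flow segment of fixed length $\theta/C$; letting $T\to\infty$ is not justified, and you cannot close the argument by invoking part~(ii) since your proof of~(ii) relies on the same inequality.

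The paper's actual mechanism is a two-step iteration that combines both ingredients. Theorem~\ref{T.mprincip} is a dichotomy for local minima of $|\al_n|^2$ on small \emph{transverse} disks: such a minimum is either $\leq C\eta(r)^{1/2}$ or $\geq 1-C\eta(r)$. This is upgraded in Theorem~\ref{T.pt} (via an energy counting argument using Lemma~\ref{L.charge1} and the hypothesis $\cE_n=o(r_n^{1/2})$) to the statement: near any point with $1-|\al_n|^2\geq\theta$ there is, within distance $\rho_n=L\cE_n r_n^{-1/2}\to0$ on a transverse disk, a point~$q_n$ with $|\al_n(q_n)|^2\leq C(r_n^{-1/4}+\rho_n)$. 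One then flows $q_n$ forward a \emph{fixed short} time~$T$ using the $O(1)$ parallel bound to get $|\al_n(\phi_X^T q_n)|^2\leq \epsilon_n+CT<1-\theta$, applies Theorem~\ref{T.pt} again on a transverse disk there to reset to a point with $|\al_n|^2\leq\epsilon_n$, and iterates. The chain of thin flow boxes so produced shadows the orbit through~$p$ and lies in $Z_n^\theta$, giving invariance. Parts~(ii) and~(iii) follow directly because Theorem~\ref{T.pt} already says each point of $Z_n^\theta$ is within $\rho_n\to0$ of a point of~$Z_n$, so $\dist_H(Z_n,Z_n^\theta)\to0$. In the bounded-energy case the same flow-box construction yields, if the orbit is not periodic, arbitrarily many pairwise disjoint cylinders each carrying $\cE_n\si_n$-mass at least~$\delta>0$, contradicting $\cE_n\leq C$; this is the ``recurrence argument'' you allude to, but it requires the flow-box chain built above, not a flux computation alone.
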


\begin{remark}
If instead of taking the Hausdorff limit of the sequences of sets in Theorem~\ref{T.2}, we take the upper Kuratowski limit, this is always compact and unique (independent of the subsequence), so we can write $Z^\theta_\infty=Z_\infty$ for all $\theta\in(0,1)$.
\end{remark}

This theorem is proved in Section \ref{S.T2}, using the maximum principle presented in Section \ref{S.max}.
It should be emphasized that, in general, the limiting invariant set $Z_{\infty}$ could be the whole manifold $M$. Indeed, because of the high oscillations of $|\psi_n|$ for large $n$, the fact that a point $p$ is in $Z_{\infty}$ does not imply that $1-|\psi_n|^2(p) >0$ for all large enough $n$; it could very well happen that $|\psi_n|(p)=1$ for all $n$. We can only characterize $Z_\infty$ when the energy is uniformly bounded.

Concerning points where $|\psi_n|$ tends to~1, the next proposition establishes that if $|\psi_n|\to 1$ on an open set~$U$, then the limiting measure does not charge this set. This result is proved in Section~\ref{S.open1}.

\begin{proposition}\label{L.con1}
Let $(r_n,\psi_n, A_n)_{n=1}^\infty$ be a sequence of solutions with unbounded energy. If $|\psi_n|\to 1$ pointwise on an open set $U\subset M$ as $n\to\infty$, then $\si_\infty(U)=0$.
\end{proposition}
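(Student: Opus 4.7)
The plan is to prove that $\int_M\phi\,d\sigma_n\to 0$ for every non-negative test function $\phi\in C_c^\infty(U)$. Since $\sigma_\infty$ is non-negative by Theorem~\ref{T.taubes}, weak convergence and inner regularity then give $\sigma_\infty(U)=0$ (take compact $K\subset U$, sandwich $\chi_K\le\phi\le\chi_U$, and pass to the supremum in $K$).

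The first step is to unwrap the first equation in~\eqref{SW}. Using the pointwise identity $\lambda\wedge*\eta=\langle\lambda,\eta\rangle_g\,\mu$ for a 1-form $\eta$, and the decomposition $\psi_n=(\alpha_n,\beta_n)$ adapted to $\lambda$ (for which $\langle\lambda,\psi_n^\dagger\sigma\psi_n\rangle_g=|\alpha_n|^2-|\beta_n|^2$), one obtains
\[
\lambda\wedge F_{A_n}=\bigl[\,r_n(1-|\alpha_n|^2)+r_n|\beta_n|^2+h_n\,\bigr]\mu,
\]
with $|h_n|\le C$ uniformly in $n$. The Taubes estimates collected in Lemma~\ref{L.TaubesEstimates} give $r_n|\beta_n|^2\le C$ on $M$, so together with $h_n$ this contributes at most $O(\cE_n^{-1})$ to $\int_M\phi\,d\sigma_n$. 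The whole problem therefore reduces to showing
\[
\int_M \phi\,r_n(1-|\alpha_n|^2)\,\mu=o(\cE_n).
\]

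The key tool here is the new maximum principle of Section~\ref{S.max} (Theorem~\ref{T.mprincip}), applied to the non-negative function $1-|\alpha_n|^2$. Away from the nodal set, it should yield a pointwise estimate of the form $r_n(1-|\alpha_n|^2)(p)\le C$ at any point where $|\alpha_n|$ is bounded below by a fixed positive constant. Because $|\psi_n|\to 1$ on $U$ and $|\beta_n|\to 0$ uniformly by Taubes's bounds, we have $|\alpha_n|^2\to 1$ pointwise on $U$; Egorov's theorem then supplies, for any $\delta>0$, a subset $U_\delta\subset U$ with $\mu(U\setminus U_\delta)<\delta$ on which $|\alpha_n|^2\ge 3/4$ uniformly once $n\ge N(\delta)$, and the maximum principle gives $\int_{U_\delta}\phi\,r_n(1-|\alpha_n|^2)\,\mu\le C$.

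The principal obstacle is the complementary integral $\int_{U\setminus U_\delta}\phi\,r_n(1-|\alpha_n|^2)\,\mu$, where $r_n(1-|\alpha_n|^2)$ could a priori be of order $r_n$ and the small measure of $U\setminus U_\delta$ is not obviously enough to compensate for this. My strategy is to use the maximum principle in a stronger, propagative form: since $1-|\alpha_n|^2$ satisfies a semilinear elliptic inequality whose principal term is coercive wherever $|\alpha_n|$ is not small, the smallness on $U_\delta$ should force smallness on any open set abutting $U_\delta$, which (by taking $\delta$ small) can be arranged to cover $U$. The decisive virtue of the maximum principle from Section~\ref{S.max}, as opposed to Taubes's vortex analysis, is that these bounds hold without any a priori control on the local energy, which is exactly what is needed in the unbounded energy regime treated here; once established, dividing by $\cE_n\to\infty$ completes the argument.
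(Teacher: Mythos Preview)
Your reduction to showing $\int_M\phi\,r_n(1-|\alpha_n|^2)\,\mu=o(\cE_n)$ is correct, but the argument you propose to achieve it has a genuine gap. Theorem~\ref{T.mprincip} does \emph{not} give a pointwise bound of the form $r_n(1-|\alpha_n|^2)(p)\le C$ at points where $|\alpha_n|$ is bounded below. That theorem applies only at \emph{local minima} of $|\alpha_n|^2$ on small transverse disks of radius $\rho(r_n)$, and its conclusion is the dichotomy $|\alpha_n(q)|^2\le C\eta(r_n)^{1/2}$ or $|1-|\alpha_n(q)|^2|\le C\eta(r_n)$ with $\eta(r_n)=r_n^{-1/2}+\rho(r_n)^2$. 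Even in the favourable branch this yields at best $r_n(1-|\alpha_n|^2)\le Cr_n\eta(r_n)$, which is of order $r_n^{1/2}$ rather than $O(1)$, and in any case the statement says nothing about points that are not local minima. So on your set $U_\delta$ the only information you have is $1-|\alpha_n|^2\le 1/4$, giving $r_n(1-|\alpha_n|^2)\le r_n/4$, and your integral is not controlled. The ``propagative'' maximum principle you invoke for $U\setminus U_\delta$ is not formulated anywhere and would need its own proof.

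The paper's proof avoids pointwise estimates altogether and works directly with the elliptic equation for $|\alpha_n|^2$ from Proposition~\ref{P.albe}. Setting $v_n:=|\alpha_n|^2-1$, one multiplies the equation by $\chi^2v_n$ for a cutoff $\chi$ supported in a small ball $B_\rho\subset U$, integrates by parts, and uses $\|v_n\|_{L^\infty(B_\rho)}\to 0$ (this is where the hypothesis enters) to absorb bad terms and obtain $\cE_n^{-1}\int_{B_{\rho/2}}|\nabla|\alpha_n|^2|^2\to 0$. A second pass, multiplying the equation by a cutoff supported in $B_{\rho/2}$ and integrating, then converts this gradient bound into $\sigma_n(B_{\rho/4})\to 0$. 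The argument is an integral energy estimate, not a maximum principle; Proposition~\ref{P.albe} (and specifically the bound $|H|\le C(1+|\nabla^\perp|\alpha|^2|)$) is the tool you should be using here.
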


\subsection{Absence of local obstructions for the limiting measures}

Our second main result proves that, locally, any invariant measure can arise as the limiting measure for some sequence of solutions to the Seiberg--Witten equations. Thus, contrary to what happens in the bounded energy case, when the energy is unbounded any attempt to derive some restrictions to the possible invariant sets of the vector field~$X$ from the PDE must involve global arguments.

To state the theorem, we start by
fixing a flow box
$\cC\subset M$ of the vector field~$X$. We choose local coordinates and identify $\cC= (0,1)\times\DD$, where
$\DD$ is the (open) unit 2-dimensional disk, and assume that $X=\pd_t$ with
$t$ being the coordinate on the interval~$(0,1)$. Note that any
$X$-invariant measure on~$\cC$ can then be written as
\[
\sigma=\siD \otimes dt \,
\]
where $\siD$ is a measure supported on~$\DD$ and $dt$ is the Lebesgue measure on
the interval. Without loss of generality, we can
normalize~$\siD$ and assume that it is a probability measure.

The following theorem does not only show that there are no local obstructions for the limiting measure obtained from solutions to the Seiberg--Witten equations. Furthermore, it also suggests that there is a connection between the dimension of the support of the invariant measure and the energy sequence. Roughly speaking, the faster the growth of $\cE_n$ that we allow, the larger the dimension of the support of the measure~$\si_\infty$. A convenient way of articulating this connection is by recalling that a probability measure $\sigma$ on~$\DD$ is {\em $d$-Frostman}\/ if the measure of any ball $B(x,\ep)$ of radius~$\ep$ is bounded as
\[
\sigma(B(x,\ep))\leq C\ep^d\,,
\]
for all $\ep>0$ and $x\in\DD$. It is standard~\cite[Exercise~1.15.20]{Tao} that this property implies that the Hausdorff dimension of the support of $\sigma$ is at least $d$ (i.e., $\dim_H \text{(supp } \sigma)\geq d$), but this property is strictly stronger in that it provides some quantitative control on the measure. It is worth mentioning that the dimension of the support of the metric is also connected with its regularity (i.e., very roughly speaking, the better the integrability properties of the weak derivatives of the measure, as estimated using Sobolev or Besov spaces, the higher the Hausdorff dimension of its support). For the benefit of the reader, we specify this connection in Proposition~\ref{P.frost}.

\begin{theorem}\label{T.3}
  Let $\siD$ be any probability measure on~$\DD$. There is an adapted metric~$g$ on~$\cC$ and a sequence of solutions $(r_n,\psi_n,A_n)$ to the Seiberg--Witten
  equations~\eqref{SW} with $\bom=0$ on the flow box $\cC$ such that
 \begin{enumerate}
 \item Setting $\cE_n:= \int_{\cC} \la\wedge F_{A_n}$, we have
  \[
\frac{\la\wedge F_{A_n}}{\cE_n}\rightarrow \siD\otimes\, dt
\]
in the sense of weak convergence of measures.
\item If $\sigma_{\DD}$ is $d$-Frostman for some $d>0$, then we can choose the sequence of solutions such that
\[
\lim_{n\to\infty}\cE_{n} r_{n}^{-\theta}= 0
\]
with $\theta:=\min\big\{\frac{1}{4}, \, \frac{d}{2(d+1)}\big\}$.
\end{enumerate}
\end{theorem}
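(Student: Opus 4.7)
The plan is to construct the sequence of solutions explicitly on $\cC$ by placing two-dimensional vortex configurations on $\DD$ whose positions are distributed according to $\siD$, and then extending trivially along $t$. I would choose the adapted metric to be of product form $g = dt^2 + g_\DD$ for a smooth metric $g_\DD$ on $\DD$, so that $X=\pa_t$ is a unit field and $\la = dt$. Under the $t$-invariant ansatz, with the spin$^c$ bundle split so that the second component of $\psi$ vanishes, the 3D Seiberg--Witten system \eqref{SW} on $(\cC,g)$ with $\bom = 0$ reduces to the classical abelian vortex equations on $(\DD, g_\DD)$ at parameter $r$. By the Jaffe--Taubes theory, for any finite family of points $\{p_j\}_{j=1}^N \subset \DD$ with mutual separation bounded below by a small multiple of $r^{-1/2}$ and any $r$ sufficiently large, there is a smooth vortex solution $(\alpha, A)$ with $\alpha$ vanishing exactly at the $p_j$, the curvature $F_A$ concentrated in discs of radius $O(r^{-1/2})$ about each $p_j$, and $F_A/(2\pi N) \to N^{-1}\sum_j \de_{p_j}$ weakly on $\DD$ as $r\to\infty$. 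Lifted trivially to $\cC$, this produces a genuine solution with $\cE = 2\pi N$ and $(\la \wedge F_A)/\cE = [F_A/(2\pi N)] \otimes dt$.

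For part~(i), I approximate the given probability measure $\siD$ weakly by atomic probability measures $\mu_n = N_n^{-1}\sum_{j=1}^{N_n} \de_{p_j^{(n)}}$ (which is possible since such atomic measures are weakly dense in the space of probability measures on $\DD$). Taking $r_n$ large enough so that the vortex cores are pairwise disjoint, a diagonal argument yields the desired weak convergence $(\la \wedge F_{A_n})/\cE_n \to \siD \otimes dt$ on $\cC$.

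For part~(ii), the $d$-Frostman hypothesis gives a quantitative control on the minimum number of vortices needed. I would cover a neighborhood of $\supp \siD$ by cells of side $\ep_n \to 0$; the Frostman condition gives $\siD(\mathrm{cell}) \leq C\ep_n^d$, so $\supp\siD$ is covered by $O(\ep_n^{-d})$ cells. Placing approximately $N_n\,\siD(\mathrm{cell})$ vortex points in each cell, using $N_n \sim \ep_n^{-d}$ in total with an interpoint separation at least a constant multiple of $\ep_n$, the vortex non-overlap condition reads $\ep_n \gtrsim r_n^{-1/2}$. Setting $\ep_n = r_n^{-\alpha}$ with $0 < \alpha < 1/2$ therefore gives $\cE_n = 2\pi N_n \sim r_n^{\alpha d}$. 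Since for every $d > 0$ one has $\theta := \min\{1/4,\,d/(2(d+1))\} \leq 1/(2(d+1)) \cdot d < d/2$, one may pick $\alpha := \theta/d - \eta$ with $\eta > 0$ small (and $\alpha < 1/2$), so that $\cE_n r_n^{-\theta} \sim r_n^{-d\eta} \to 0$, as required.

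The main obstacle will be verifying that the two-dimensional ansatz indeed produces honest solutions of the 3D Seiberg--Witten system on $\cC$: this requires a careful identification of the spin$^c$ structure on $\cC$, the choice of reference connection $A_0$, and the local form of the Dirac operator in the product metric, so that the 3D Dirac equation genuinely collapses to its 2D Cauchy--Riemann-type counterpart for $t$-invariant spinors. Once this reduction is in place, the rest is combinatorial (placing the vortex points according to the Frostman structure) and quantitative (the resulting asymptotic for $\cE_n$). A secondary issue is working on the open disk $\DD$ rather than on a closed surface, which can be handled by importing the vortex solutions from $\RR^2$ (where exponential decay of the vortex profile means each vortex is essentially supported in a disc of radius $O(r^{-1/2})$) and localizing their contributions well inside $\DD$.
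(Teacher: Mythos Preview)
Your overall strategy is exactly the paper's: take the flat product metric on $\cC$, use the $t$-invariant ansatz with $\beta=0$, $A_t=0$ to reduce Seiberg--Witten to the rescaled vortex equations on $\DD$, invoke Jaffe--Taubes to produce multi-vortex solutions with prescribed zero sets $\cP_n$ approximating $\siD$ by atomic measures, and let $r_n\to\infty$ along a suitable sequence. The reduction step and the identification $\cE_n\approx 2\pi N_n$ (up to an exponentially small boundary leak) are handled just as you describe.

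The gap is in your concentration claim and its consequences for part~(ii). You assert that for an $N$-vortex solution the curvature is ``concentrated in discs of radius $O(r^{-1/2})$ about each $p_j$'' as soon as the mutual separation exceeds a small multiple of $r^{-1/2}$. This is not what the standard estimates give when $N$ is allowed to grow: the universal exponential decay in Jaffe--Taubes is measured from the sublevel set $\Omega^-=\{|\phi|^2\le 1/2\}$, not from the zero set, and an energy argument only bounds each connected component of $\Omega^-$ by a ball of radius $O(N r^{-1/2})$ (this is the paper's Lemma~\ref{balls}). Consequently the separation condition actually needed is $\ep_n\gg N_n r_n^{-1/2}$, not $\ep_n\gtrsim r_n^{-1/2}$. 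With the Frostman relation $\ep_n\sim N_n^{-1/d}$ this forces $N_n^{(d+1)/d}=o(r_n^{1/2})$, i.e.\ $N_n=o(r_n^{d/(2(d+1))})$; a separate estimate on $\int\log|\phi_{r_n}|^2$ over the vortex cores (needed to turn the formal identity $\Delta\log|\phi|^2+2r(1-|\phi|^2)=4\pi\sum m_j\delta_{z_j}$ into the weak convergence statement) produces a term of size $N_n^4/r_n$, giving the additional constraint $N_n=o(r_n^{1/4})$. These two constraints are precisely the origin of $\theta=\min\{1/4,\,d/(2(d+1))\}$. Your computation in part~(ii) instead yields any exponent below $d/2$, which is strictly better than $\theta$; this signals that your separation hypothesis is too weak to carry the weak-convergence proof, and the diagonal argument as written does not close. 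To make the sketch into a proof you would need either to justify the $O(r^{-1/2})$ concentration uniformly in $N$ (which would genuinely improve the theorem), or to impose the stronger separation and carry out the quantitative analysis of $\int_\DD\log|\phi_{r_n}|^2$ as in the paper's Lemmas~\ref{balls}--\ref{insid}.
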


\subsection{Structure of the paper}

In Section~\ref{S.setting} we recall the definition of the various
objects appearing in the modified Seiberg--Witten equations and some properties of the solutions. Some further auxiliary
equations are derived too. In Section~\ref{S.max} we prove a new
maximum principle for these equations that can be effectively applied
to sequences of solutions with unbounded energy. This result turns out to be a fundamental tool to analyze the properties of the limiting invariant measures. The proofs of Theorem~\ref{T.2} and Proposition~\ref{L.con1} are presented in
Section~\ref{S.T2}. As an additional application of the new maximum principle, we also include an alternative proof of Taubes's periodic orbit theorem. The proof of Theorem~\ref{T.3} on the absence of local obstructions for the limiting measure is given in Section~\ref{S.3}. Finally, in Section~\ref{S.ergodic} we show that the vector field $X$ cannot be ergodic provided that the energy growth is linear.  We also include Appendix~\ref{S.foliated} with an additional result that can be useful for future work in the subject. Concretely, we reinterpret the concentration properties of solutions to the Seiberg--Witten equations using Sullivan's theory of currents, thus implying as a particular consequence the refinement stated in Remark~\ref{R.=0}.

\section{Setting and preliminary results}\label{S.setting}

In this section, following Taubes~\cite{T07} (see also~\cite{Sun}), we include the precise formulation of the modified Seiberg--Witten equations, Taubes's theorem on the existence of solutions (Theorem~\ref{T.existence}) and the basic a priori estimates (Lemmas~\ref{L.TaubesEstimates} and~\ref{L.bounds2}). Our main contribution is Proposition~\ref{P.albe}, which shows that the function $|\alpha|^2$ defined below satisfies an explicit second order elliptic PDE on $M$. This statement is not included in Taubes's works and is key to prove the new maximum principle we present in Section~\ref{S.max}.

\subsection{Definitions and existence of solutions}\label{S.defs}

Let us recall the definition of the modified
Seiberg--Witten equations. The reader can find further details
in~\cite{T07,T09,Hutchings}. Throughout, $g$ denotes a fixed
Riemannian metric on the 3-manifold $M$ adapted to the volume-preserving
vector field~$X$, and $\la:= g(X,\cdot)$ stands for the dual
1-form. In
what follows, we will assume that the helicity of the vector field~$X$
is positive.

We start by recalling that a {\em spin-c structure\/} on $M$ is a
pair $\frak s= ({\mathbb
    S},\si)$, where ${\mathbb S}$ is a rank-$2$ Hermitian vector
  bundle on $M$, called the {\em spinor bundle\/}, and
\[
\si: TM \longrightarrow \op{End}({\mathbb S})
\]
is a bundle map, called {\em Clifford multiplication\/}, such that for each $p\in M$:
\begin{enumerate}
\item
If $V,W\in T_pM$, then
\[
\si(V)\si(W) + \si(W)\si(V) = -2g(V,W)\,.
\]
\item
If $e_1,e_2,e_3$ is an oriented orthonormal frame for $T_pM$,
then
\[
\si(e_1)\si(e_2)\si(e_3)=1\,.
\]
\end{enumerate}
Of course, taking a local trivialization, we can identify $\SS$ with $\CC^2$.

A {\em spin-c connection} $A$ on $\SS$ is a connection that behaves naturally with respect to the Clifford multiplication: for any section $\psi: M \rightarrow \SS$ and any vector fields $X, Y \in TM$, we have
\[
\nabla^{A}_X (\sigma(Y) \psi)=\sigma(\nabla^{LC}_{X} Y) \psi+\sigma(Y)\nabla^{A}_{X} \psi
\]
where $\nabla^{LC}$ is the Levi-Civita connection on $TM$ induced by the metric $g$.

Consider the oriented 2-plane field $K^{-1}:=\Ker \la$,
which we regard as a Hermitian line bundle. It is standard that
$K^{-1}$ determines a distinguished spin-c structure $\frak s_\la=(\SS_\la,\si_\la)$ on~$M$, in which
\begin{equation}\label{splitting0}
{\mathbb S}_\la=\underline{\CC}\oplus K^{-1}\,,
\end{equation}
where $\underline{\CC}$ denotes the trivial complex line bundle over~$M$.
Any spin-c structure $\frak s =(\SS,\si)$ is obtained from this one by
tensoring with a suitable Hermitian line bundle $E$, so that
\begin{equation}
\label{eqn:SEKE}
{\mathbb S} = E \otimes {\mathbb S}_\la= E\oplus K^{-1}E\,
\end{equation}
and $\sigma=\sigma_{\lambda}\otimes \text{Id}_{E}$.
In this decomposition, $E$ is the $+i$ eigenspace of Clifford
multiplication by the vector field dual to the 1-form $\lambda$, while $K^{-1}E$ is the $-i$
eigenspace.

In what follows, $E$ stands for a fixed line bundle whose first Chern
class is such that $c_1(K^{-1})+ 2 c_1(E)$ is torsion
in~$H^2(M;\ZZ)$. This is equivalent to requiring
that $\det(\SS)$ has torsion first Chern class, since any spin-c connection on $\SS$ induces a connection on the determinant bundle
$\det(\SS)$ that can be written as $A_0+2A$, where $A_0$ is a connection on $K^{-1}$ (inducing a natural spin-c connection on $\SS_{\lambda}$) and $A$ is a connection on $E$.

The unknowns in the Seiberg--Witten equations are a
spinor~$\psi$, which is a section of~$\SS$, and a connection~$A$
on~$E$, whose curvature we denote by $F_A$. We also need an auxiliary connection~$A_0$ on~$K^{-1}$, which
we pick (following Taubes) as the only connection such that
\begin{equation}\label{fiducial}
D_{A_0}\psi_0=0\,,
\end{equation}
where $\psi_0:=(1,0)$ is a section of the distinguished spin
bundle~$\SS_\la$, and $D_{A_0}$ should be understood as the Dirac operator associated with the unique spin-c connection on $\SS_\lambda$ defined by the connection $A_0$ on $K^{-1}$ and the trivial connection on $\CC$. It is well known that the connections
$A_0$ on~$K^{-1}$ and $A$ on~$E$ determine a unique spin-c connection on~$\frak s$, which we will
denote by~$\nabla_A$; the associated Dirac operator is then defined as $D_A:=\sigma(\nabla_A)$. Taubes's modified Seiberg--Witten equations read
\begin{equation}
\label{SWrepeat}
\begin{split}
  *F_{A} &= r(\lambda-\psi^\dagger\si\psi) +
  \bom\,,\\
  D_{A}\psi &= 0\,,
\end{split}
\end{equation}
where $r>1$ is a real parameter. Here $\bom$ is a given 1-form whose significance will become clear in a moment, and $\psi^\dagger\si\psi$ is the 1-form that acts on any vector field $V$ as:
\[
\psi^\dagger\si\psi(V):=-i \psi^\dagger\si(V)\psi \,.
\]
Notice that the properties of the Clifford map $\sigma$ ensure that the 1-form $\psi^\dagger\si\psi$ is real valued.

\begin{remark}\label{R:local}
In local coordinates on a ball $B\subset M$, if $\{e_1,e_2,e_3\}$ is a local orthonormal frame so that $\{e_2,e_3\}$ span $\Ker \la$, then $\psi$ is a function from $B$ to $\CC^2$, $A$ and $A_0$ are (real-valued) $1$-forms,
$$\sigma(X)=i\Big[(X\cdot e_1)\sigma_1+(X\cdot e_2)\sigma_2+(X\cdot e_3)\sigma_3\Big]\,,$$
where $\sigma_k$ are the Pauli matrices, and the covariant derivative $\nabla_A \psi$ can be understood as two complex-valued vector fields on $B$ given by
\[
\nabla_A \psi=\nabla \psi+  \Lambda  \psi- \frac{i}{2} (A_0^{\sharp}+2 A^{\sharp})  \psi\,.
\]
Here  $A^\sharp$ is the vector field associated with the 1-form~$A$ and $\Lambda$ is the $2 \times 2$ matrix-valued vector field given by
 \[
 \Lambda= \frac{1}{8} g(\nabla_{e_j} e_{m}, e_{n}) [\sigma(e_n), \sigma(e_m)] e_{j}\,.
\]
Summation over repeated indices is understood.
\end{remark}

We are now ready to state the fundamental existence theorem due to
Taubes~\cite{T07} that we will need in this paper. A caveat is
that we have not defined what one means by the {\em degree}\/ of the solutions to
the modified Seiberg--Witten equations whose existence is proved
here. The notion of degree can be defined using the Seiberg--Witten--Floer
homology but, since we will not need it in the following, we refer
to~\cite{T07,Hutchings} for the precise definition. We stress that in the case of Reeb fields, the value of the energy (which is always finite) can be related to the degree~\cite{Sun}.

Let us also
record here that a solution $(A,\psi)$ is called {\em irreducible}\/
if $\psi$ is not identically~0. Finally, we will denote by $\bom_K$ the harmonic
1-form on~$M$ with the property that the Hodge dual $*\bom_K$
represents the image in the cohomology group $H^2(M)$ of the first
Chern class~$c_1(K)$. Equivalently, one can set
\begin{equation}\label{bom}
\bom_K:=-\frac{1}{2}* F_{A_0}+ * d \Ga\,,
\end{equation}
where the $1$-form $\Ga$ satisfies
\[
d* d \Ga=\frac{1}{2}d * F_{A_0}\,.
\]

\begin{theorem}[Taubes]\label{T.existence}
Let $X$ be a nonvanishing volume-preserving vector field with positive
helicity. There is a real number $0<\theta<1$ and an infinite set of
negative integers $\cK$ such that, for each fixed $k \in \cK$, we have:

\begin{enumerate}

\item There exists a smooth 1-form $\bom'$, of arbitrarily
  small $C^3$ norm, such that the Seiberg--Witten Equations~\eqref{SWrepeat} with
  $\bom:=\bom_K+\star d \bom'$ has an irreducible solution~$(\psi,A)$ of degree~$k$ provided
  that the value of the parameter~$r$ belongs to a certain increasing sequence
  $(r_n)_{n=1}^\infty\subset (1,\infty)$ (depending on~$k$) without any accumulation points.

\item The aforementioned sequence of solutions $(\psi_n,A_n)$ of degree~$k$
  corresponding to the value of the parameter~$r_n$ satisfy the
  uniform bound
\begin{align*}
  \|1-|\psi_n|^2\|_{L^\infty(M)}>\te\,.
\end{align*}
\end{enumerate}

\end{theorem}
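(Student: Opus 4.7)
The plan is to follow the variational approach via Kronheimer--Mrowka monopole Floer homology, which is the natural framework. I would start from the Chern--Simons--Dirac functional on the configuration space of pairs $(A,\psi)$, schematically
\[
\mathfrak{a}_r(A,\psi) = -\tfrac12\int_M(A-A_0)\wedge(F_A+F_{A_0}) - r\int_M(A-A_0)\wedge *\lambda + r\int_M\psi^\dagger D_A\psi\,\mu,
\]
modified by perturbation terms built from $\bom_K$ and $\bom'$. Its Euler--Lagrange equations reproduce \eqref{SWrepeat}, and it is gauge-invariant modulo a period. For generic small $\bom'$ and generic $r$, the critical set is cut out transversely, so one can set up the Kronheimer--Mrowka monopole Floer chain complex associated with each spin-c structure $\mathfrak{s}_E=\mathfrak{s}_\lambda\otimes E$.

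The crucial step is to exploit $\cH(X)>0$ to produce nontrivial Floer classes in infinitely many degrees. For each line bundle $E$ with $c_1(K^{-1})+2c_1(E)$ torsion, the Floer homology is a topological invariant, independent of $r$ and of small perturbations. On the other hand, as $r$ varies the Floer grading of the critical points shifts by a spectral flow of the Dirac operator along a natural one-parameter family of configurations, and a direct computation identifies the leading-order coefficient of this spectral flow with $\cH(X)$. Positivity of the helicity therefore forces the gradings of the surviving nontrivial classes to sweep out an infinite set of negative integers as $E$ ranges over admissible line bundles and $r\to\infty$; collecting these gradings yields the set $\cK$. For each $k\in\cK$, the nonvanishing class forces the existence of an irreducible critical point for every $r$ outside a countable, locally finite resonance set, which gives the sequence $(r_n,\psi_n,A_n)$ of (i).

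For (ii) I would argue by contradiction. Suppose that along some subsequence $\|1-|\psi_n|^2\|_{L^\infty(M)}\to 0$. Combining this with the uniform bound $|\beta_n|^2\leq C/r_n$ on the $K^{-1}E$-component (Lemma~\ref{L.TaubesEstimates}), the $E$-component $\alpha_n$ satisfies $|\alpha_n|^2\geq 1-\|1-|\psi_n|^2\|_{L^\infty}-C/r_n>0$ for all large $n$. Then $\alpha_n$ is a nowhere-vanishing section of $E$, so $c_1(E)=0$. But the set $\cK$ is chosen so that every $k\in\cK$ corresponds to a line bundle with nontrivial $c_1(E)$ (this is an automatic byproduct of the spectral-flow computation: the grading of the Floer class is a nondegenerate affine function of $c_1(E)$, and the zero value can be avoided). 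This contradiction furnishes the claimed uniform lower bound $\|1-|\psi_n|^2\|_{L^\infty}>\theta$.

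The main obstacle is the spectral-flow and nonvanishing step in the second paragraph: tying together the Kronheimer--Mrowka nonvanishing theorems with Taubes's asymptotic expansion of the Dirac spectral flow, and in particular identifying $\cH(X)$ as the leading coefficient, is the genuinely hard analytical and topological input on which everything else rests. Once this input is granted, the existence of irreducible solutions for a divergent sequence of parameters and the quantitative bound in (ii) follow from standard arguments in Floer theory, elliptic perturbation theory, and the basic pointwise estimates on $|\psi|^2$.
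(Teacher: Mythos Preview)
The paper does not prove this theorem: it is stated as a result of Taubes and cited to~\cite{T07} (and implicitly~\cite{KM}), with no argument given beyond the reference. So there is no ``paper's proof'' to compare against; what you have written is a sketch of Taubes's own argument.

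Your outline of part~(i) is broadly faithful to that argument: the Chern--Simons--Dirac functional, the Kronheimer--Mrowka Floer complex, genericity of the perturbation, and the spectral-flow computation tying the grading drift to the helicity are indeed the essential ingredients. You are also right to flag the spectral-flow/nonvanishing step as the deep input.

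Your argument for part~(ii), however, contains a genuine conceptual slip. You write that ``every $k\in\cK$ corresponds to a line bundle with nontrivial $c_1(E)$'' and that different $k$ arise as $E$ ranges over admissible bundles. In the setup of both this paper and~\cite{T07}, the line bundle~$E$ (equivalently, the spin-c structure) is \emph{fixed}: the torsion condition on $c_1(\det\SS)$ makes the Floer grading $\ZZ$-valued, and the infinitely many degrees in~$\cK$ come from the fact that the monopole Floer homology of a single torsion spin-c structure is infinitely generated, not from varying~$E$. In particular, nothing prevents $E$ from being trivial, in which case your contradiction via ``$\alpha_n$ nowhere vanishing $\Rightarrow c_1(E)=0$'' is vacuous. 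Taubes's actual mechanism is different: if $\|1-|\psi_n|^2\|_{L^\infty}\to0$ one shows, via the a~priori estimates and a compactness/gauge-fixing argument, that the solutions converge to the distinguished ``trivial'' configuration associated with $\psi_0=(1,0)$; this configuration has a specific, computable Floer degree, and one obtains the contradiction by choosing $k\in\cK$ different from that degree (this is where the infinitude of $\cK$ is used). Your $c_1(E)\neq0$ shortcut only covers the special case where the fixed bundle happens to be nontrivial.
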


\subsection{A priori estimates and a useful equation}

Let us henceforth employ the shorthand notation
\[
\psi=:(\al,\be)
\]
for the decomposition according to the splitting~\eqref{eqn:SEKE} of
the spinor part of the solution $(A,\psi)$ to the modified
Seiberg--Witten equations. In view of Remark~\ref{R:local}, it is clear that both $\al$ and $\beta$ can be locally understood as complex-valued functions.

In what follows let $(r_n,\psi_n,A_n)$ be a sequence of solutions as in Theorem~\ref{T.existence} (see also Theorem~\ref{T.taubes}). For future reference we record here an identity connecting
the signed measures $\si_n$ and the decomposition $\psi_n=(\al_n,\be_n)$ that will be useful in the case when
$\cE_n\to\infty$:
\begin{equation}\label{sigma_n}
  \si_n= \frac{    r_n(1-|\al_n|^2)\, \mu}{\cE_n}+ O(\cE_n^{-1})\,\mu\,.
\end{equation}
This follows easily from the second estimate in Lemma \ref{L.TaubesEstimates} and the fact that
\[
(\star \psi^\dagger\si\psi) \wedge \lambda=\psi^\dagger\si\psi (X)\,\mu=(|\al|^2-|\be|^2)\,\mu \,.
\]

Now we recall Taubes's a priori estimates for solutions of the
Seiberg--Witten equations~\cite[Lemmas 2.2 and 2.3]{T07}.  With a slight abuse of notation, here and in what follows we will use $\nabla_{A}$ to denote both the covariant derivative defined by the connection $A$ on $E$ and the covariant derivative defined by $A$ and $A_0$ on $E\otimes K^{-1}$. In other words, in local coordinates $\nabla_A\alpha=\nabla\alpha-iA^\sharp\alpha$ and $\nabla_A\beta=\nabla\beta -i(A^\sharp+A_0^\sharp)\beta$.

\begin{lemma}\label{L.TaubesEstimates}
There exists a constant $C$ such that
the solution $(\psi,A)$ is bounded as
\begin{align*}
|\alpha| & \le 1 + \frac{C}{r}\,,\\
|\beta|^2 &\le \frac{C}{r}\left|1-|\alpha|^2\right| + \frac{C}{r^2}\,,\\
|\nabla_{A} \alpha| & \le C \sr\,,\\
|\nabla_{A} \beta| & \le C\,, \\
|\nabla^2_{A} \alpha| & \le C r\,,\\
|\nabla^2_{A} \beta| & \le C \sr\,.
\end{align*}
In particular, the negative part of $1-|\al|^2$ is bounded as
\[
1-|\al|^2\geq -\frac Cr\,.
\]
\end{lemma}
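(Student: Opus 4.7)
The plan is to follow the standard Kronheimer--Mrowka/Taubes strategy based on a Weitzenb\"ock identity combined with a maximum principle argument. The starting point is the Lichnerowicz--Weitzenb\"ock formula for the Dirac operator on~$\SS$, which together with the first Seiberg--Witten equation gives a pointwise identity of the schematic form
\[
-\tfrac12\Delta|\psi|^2 + |\nabla_A\psi|^2 + \tfrac{r}{2}\bigl(|\psi|^2-\langle\lambda,\psi^\dagger\si\psi\rangle\bigr)|\psi|^2 + R(\psi) + \langle\bom\cdot\psi,\psi\rangle=0,
\]
where $R(\psi)$ collects lower-order terms involving the scalar curvature of~$g$ and the fixed connection~$A_0$. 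Using the algebraic identity $\langle\la,\psi^\dagger\si\psi\rangle = |\al|^2-|\be|^2$ in the splitting~\eqref{eqn:SEKE}, the quartic term above becomes $\tfrac{r}{2}\bigl(|\psi|^4 - (|\al|^2-|\be|^2)|\psi|^2\bigr)$, which is essentially $\tfrac{r}{2}|\psi|^2(|\psi|^2-1) + O(r)|\be|^2$.

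At an interior maximum of $|\al|^2$, dropping $|\nabla_A\psi|^2\geq 0$ and using $\Delta|\al|^2\leq 0$, the pointwise identity forces the quartic term to be bounded by the $O(1)$ lower-order contributions. This immediately yields $|\al|^2 \leq 1 + C/r$ once $|\be|^2$ is known to be small; the lower bound $1-|\al|^2\geq -C/r$ follows by the same argument applied at a minimum. To obtain the sharp bound on~$\be$, I would decompose the Dirac equation $D_A\psi=0$ according to $\SS = E\oplus K^{-1}E$. Because $\sigma(X)$ acts as $+i$ on~$E$ and as $-i$ on $K^{-1}E$, the equation couples $\al$ and $\be$ in the schematic form $\nabla_A^{\parallel}\al = (\text{zeroth order in }\be)$ and $\nabla_A^{\parallel}\be+r\be = (\text{zeroth order in }\al)\cdot(1-|\al|^2) + O(1)$. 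Applying Weitzenb\"ock to the $K^{-1}E$-component alone and using the positivity of the ``mass term'' $r|\be|^2$, one derives
\[
-\tfrac12\Delta|\be|^2 + r|\be|^2 \leq C(1-|\al|^2) + Cr^{-1}.
\]
The maximum principle then gives exactly the claimed $|\be|^2\leq C r^{-1}|1-|\al|^2|+Cr^{-2}$.

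Once the $C^0$ estimates are in place, the first-order derivative bounds $|\nabla_A\al|\leq C\sr$ and $|\nabla_A\be|\leq C$ follow by differentiating the identity for $|\psi|^2$ once and using the previous pointwise bounds together with interior elliptic estimates for the Dirac operator. A bootstrap argument, applying elliptic regularity to $D_A\psi=0$ with the gradient bounds as input, produces the second-order estimates $|\nabla_A^2\al|\leq Cr$ and $|\nabla_A^2\be|\leq C\sr$. The correct scaling in $r$ is kept track of by rescaling at the natural small scale $r^{-1/2}$ on which the equations become regular and the nonlinearity becomes $O(1)$.

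The main obstacle, and the subtle point of Taubes's original argument, is obtaining the \emph{refined} bound $|\be|^2 \leq Cr^{-1}|1-|\al|^2| + Cr^{-2}$ rather than the cruder $|\be|^2\leq C/r$. This refinement is essential for the applications (and for the new maximum principle derived later in the paper), and it requires carefully exploiting the fact that the ``source'' for~$\be$ in the decomposed Dirac equation is the transverse covariant derivative of~$\al$, whose squared norm is controlled by $(1-|\al|^2)$ plus terms of order $r^{-1}$. Once this algebraic coupling is unravelled, the maximum principle applied to a suitable nonnegative combination of $|\be|^2$ and $(1-|\al|^2)$ closes the argument.
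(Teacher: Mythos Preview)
The paper does not actually prove this lemma: it is simply quoted from Taubes~\cite[Lemmas~2.2 and~2.3]{T07} without argument. Your sketch follows the same route Taubes uses there---Weitzenb\"ock identity plus maximum principle for the $C^0$ bounds, then rescaling to scale $r^{-1/2}$ and elliptic regularity for the derivative estimates---so at the level of strategy there is nothing to compare.

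That said, two details in your outline are off. First, you derive a Weitzenb\"ock identity for $|\psi|^2$ but then invoke the maximum principle at a maximum of $|\alpha|^2$; these are different functions. Taubes in fact derives \emph{separate} second-order equations for $|\alpha|^2$ and $|\beta|^2$ (the one for $|\alpha|^2$ is exactly what the present paper exploits in Proposition~\ref{P.albe}), and applies the maximum principle to each. Second, your schematic decomposition $\nabla_A^\parallel\beta + r\beta = \cdots$ is incorrect: the Dirac equation itself carries no $r\beta$ term. The large coefficient in front of $|\beta|^2$ enters only through the curvature $F_A$ in the Weitzenb\"ock formula for the $K^{-1}E$-component, since $*F_A$ contains the piece $r(1-|\alpha|^2+|\beta|^2)\lambda$. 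The resulting differential inequality has the shape
\[
-\tfrac12\Delta|\beta|^2 + |\nabla_A\beta|^2 + r|\alpha|^2|\beta|^2 \leq C|\beta|^2 + (\text{lower order}),
\]
and the refined bound is obtained by applying the maximum principle to a combination of the form $|\beta|^2 - \kappa r^{-1}(1-|\alpha|^2)$, using the companion equation for $|\alpha|^2$, rather than from the Dirac equation directly.
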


A first refinement of these a priori estimates we need, which is implicit in Taubes's
work, is a set of anisotropic estimates that provide finer control of
some geometric quantities. To emphasize this anisotropy, it is
convenient to introduce
some further notation. Given a scalar function~$f$ on~$M$, we let
\begin{align*}
\nablap f&:=(X\cdot \nabla f) X \,,\\
\nablat f& := \nabla f- \nablap f
\end{align*}
denote the components of its gradient that are parallel and
perpendicular to the field~$X$, respectively. For sections of the vector bundles $\SS$, $E$ and $E \otimes K^{-1}$, $\nablap_{A} $ and $\nablat_{A}$ are defined analogously.

\begin{lemma}\label{L.bounds2}
A solution~$(\psi,A)$ to the modified Seiberg--Witten equations
satisfies the following anisotropic bounds:
\begin{align*}
  |\nablap |\al|^2| & \leq C\,,\\
  |\nablat |\al|^2| & \leq C\sr\,,\\
  \big|\nabla \nablap |\al|^2\big| & \leq C\sr\,,\\
  |\nabla^2 |\al|^2|&\leq Cr\,,\\
  |X\cdot\nabla_A\al|& \leq C\,.
\end{align*}
\end{lemma}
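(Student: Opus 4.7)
The plan is to derive all five anisotropic estimates from one key observation: the Dirac equation $D_A\psi=0$ expresses the $X$-derivative of $\alpha$ algebraically in terms of the perpendicular covariant derivatives of~$\beta$. Since Lemma~\ref{L.TaubesEstimates} gives $|\nabla_A\beta|\le C$ while $|\nabla_A\alpha|$ is only $O(\sqrt{r})$, this trade is precisely the anisotropic improvement on which the whole lemma rests.

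First I would work in a local orthonormal frame $\{e_1,e_2,e_3\}$ with $e_1=X$, adapted to the splitting $\SS=E\oplus K^{-1}E$ so that $\sigma(X)$ is diagonal with eigenvalues $\pm i$ and $\sigma(e_2),\sigma(e_3)$ swap the two summands. Projecting the Dirac equation $D_A\psi=0$ onto its $E$-component then yields an identity of the form
\[
i\,\nabla_{A,X}\alpha \;=\; \mathcal{L}\bigl(\nabla_{A,e_2}\beta,\,\nabla_{A,e_3}\beta\bigr) + \text{lower-order terms},
\]
with $\mathcal{L}$ a constant-coefficient linear map and the lower-order contributions coming from the frame-connection matrix $\Lambda$ and the gauge term $A_0^\sharp+2A^\sharp$ of Remark~\ref{R:local}. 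Combined with $|\nabla_A\beta|\le C$ and the pointwise bounds on $|\alpha|,|\beta|$ from Lemma~\ref{L.TaubesEstimates}, this gives assertion~(v), namely $|X\cdot\nabla_A\alpha|\le C$.

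Assertions~(i), (ii), and~(iv) then follow with little extra effort. For~(i), the compatibility of $\nabla_A$ with the Hermitian metric produces the identity $\nablap|\alpha|^2=2\Real\langle\nabla_{A,X}\alpha,\alpha\rangle\,X$, whose norm is controlled directly by (v). Assertion~(ii) is immediate from $|\nablat|\alpha|^2|\le|\nabla|\alpha|^2|\le 2|\alpha|\,|\nabla_A\alpha|\le C\sqrt{r}$, and assertion~(iv) follows by expanding $\nabla^2|\alpha|^2$ into terms of the form $\nabla_A\alpha\otimes\nabla_A\alpha$ and $\alpha\otimes\nabla_A^2\alpha$ and inserting the a priori bounds of Lemma~\ref{L.TaubesEstimates}.

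Bound~(iii) is the delicate one, and it is where the Dirac equation has to be differentiated a second time. Starting from the same identity $\nablap|\alpha|^2=2\Real\langle\nabla_{A,X}\alpha,\alpha\rangle\,X$, the Leibniz rule produces three kinds of terms: one of the form $\nabla_A\alpha\otimes\nabla_{A,X}\alpha$, bounded by $\sqrt{r}\cdot 1$; one involving $\nabla X$, which is harmless; and the critical one, $\alpha\otimes\nabla_A(\nabla_{A,X}\alpha)$, for which the naive estimate $|\nabla_A^2\alpha|\le Cr$ only yields~$Cr$. To do better I would differentiate the algebraic expression for $\nabla_{A,X}\alpha$ obtained in step~(v): this rewrites $\nabla_A(\nabla_{A,X}\alpha)$ as a linear combination of second covariant derivatives of $\beta$ plus lower-order terms in $\nabla_A\beta$ and $\nabla_A\alpha$. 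The crucial bound $|\nabla_A^2\beta|\le C\sqrt{r}$ from Lemma~\ref{L.TaubesEstimates} then forces $|\nabla_A(\nabla_{A,X}\alpha)|\le C\sqrt{r}$, which is exactly what is needed. The main difficulty here is the careful bookkeeping of the frame-connection and gauge contributions through this second differentiation, but all of them are uniformly bounded in terms of the $C^3$-data of the metric and reference connection, so no estimate beyond those in Lemma~\ref{L.TaubesEstimates} is required.
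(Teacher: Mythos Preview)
Your proposal is correct and follows the same route as the paper, which derives (ii) and (iv) from the Leibniz identities $|\nabla|\al|^2|\le 2|\al|\,|\nabla_A\al|$ and $|\nabla^2|\al|^2|\le 2|\nabla_A\al|^2+2|\al|\,|\nabla_A^2\al|$, obtains (v) and hence (i) from the Dirac relation $|X\cdot\nabla_A\al|\le C(|\nabla_A\be|+|\be|+|\al|)$, and for (iii) simply says that ``the remaining inequalities follow'' from the bounds on the \emph{derivatives} of~$\be$---which is precisely your strategy of differentiating the Dirac identity and invoking $|\nabla_A^2\be|\le C\sqrt r$. One small caveat: the lower-order terms in the projected Dirac equation involve only $\al$ and $\be$ with metric-bounded coefficients (coming from $\Lambda$), not the unbounded gauge potential $A^\sharp$ itself, so your phrasing there is slightly misleading even though your subsequent estimates use only the correct ingredients.
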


\begin{proof}
Since $A$ is a Hermitian connection on $E$, we have, for any vector field $V$,
\begin{equation*}
\label{lb1}
V \cdot \nabla |\al|^2= 2\Real( \bar\al V \cdot \nabla_{A} \alpha)\,,
\end{equation*}
so we readily get
\begin{equation*}
\label{lb2}
|\nabla |\al|^2| \leq 2|\al| |\nabla_A \al|\,.
\end{equation*}
Similarly, since
\[
W\cdot\nabla(V \cdot \nabla |\al|^2)= 2 \Real (\overline{W \cdot \nabla_A \alpha} \, V\cdot \nabla_A \alpha)+ 2 \Real[\bar\al W \cdot \nabla_A ( V\cdot \nabla_A \alpha) ]
\]
for any vector fields~$V,W$, one finds that
\begin{equation*}\label{lb3}
|\nabla^2 |\al|^2| \leq 2|\nabla_A \al|^2+ 2|\al| |\nabla^2_{A} \al|\,.
\end{equation*}
These equations together with the bounds in Lemma
\ref{L.TaubesEstimates} automatically imply the second and fourth
estimates we aimed to prove.

To derive the other bounds, we first observe that the Dirac equation implies (see e.g.~\cite[Equation~3.7]{T09})
\[
|X\cdot \nabla_A\al |\leq C (|\nabla_A \beta|+|\beta|+|\alpha|)\,.
\]
Combining all the previous estimates together with the bounds on the derivatives of $\be$ in Lemma~\ref{L.TaubesEstimates}, the remaining inequalities follow.
\end{proof}

Finally, we are ready to state the main result of this section. In the following proposition we show that the function $|\alpha|^2$ satisfies an explicit second order elliptic PDE on $M$. This result will be instrumental in the proof of a new maximum principle for solutions of the Seiberg--Witten equations, cf. Theorem~\ref{T.mprincip}.

\begin{proposition}\label{P.albe}
  The absolute value of~$\al$ satisfies the equation
\begin{equation}\label{eqal2}
|\al|^{2}\Delta |\al|^{2}-  {|\nabla |\al|^2|^2}+2r|\al|^{4}(1-|\al|^{2}-|\be|^2)
  =H(\psi,A)\,
\end{equation}
where the term $H(\psi,A)$ is pointwise bounded as
\begin{align*}
|H(\psi,A)|&\leq C\Big(1+|\nablat |\al|^2|\Big)\
\end{align*}
for an $r$-independent constant $C$.
\end{proposition}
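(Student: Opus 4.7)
The plan is to derive~\eqref{eqal2} by combining the Weitzenböck--Lichnerowicz identity for the spin-c Dirac operator with the first Seiberg--Witten equation, and then reduce the bound on~$H$ to a Kato-type cancellation controlled by the Dirac equation. I would start from $\nabla_A^*\nabla_A\psi+\tfrac{s}{4}\psi+\tfrac{1}{2}\si(F_{\hat A})\psi=D_A^2\psi$, where $\hat A=A_0+2A$ is the induced connection on $\det\SS$. Using $D_A\psi=0$, the three-dimensional relation $\si(\om)=-\si(\star\om)$ for 2-forms (which follows from $\si(e_1)\si(e_2)\si(e_3)=1$), the first equation in~\eqref{SWrepeat}, and the pointwise identity $\si\bigl((\psi^\dagger\si\psi)^\sharp\bigr)\psi=i|\psi|^2\psi$ (easily verified by decomposing $\psi=(\al,\be)$ and using that $\si(X)$ acts as $\pm i$ on the summands of $\SS=E\oplus K^{-1}E$), the $E$-component of the resulting identity can be rewritten as
\[
(\nabla_A^*\nabla_A\psi)_E=r\al\bigl(1-|\al|^2-|\be|^2\bigr)+R_E\,,
\]
where $R_E$ gathers the contributions of $s$, $F_{A_0}$, $\bom$ and the off-diagonal entries of the spin-c connection. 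By Lemma~\ref{L.TaubesEstimates}, every piece of $R_E$ depending on $\be$ or $\nabla_A\be$ is $O(1)$; the only potentially unbounded contribution is a linear term $M\cdot\nabla_A\al$ with $M$ a bounded $1$-form coming from the spin-connection matrix $\Lambda$ of Remark~\ref{R:local}.

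Pairing with $\bar\al$, taking real parts and substituting into the Bochner identity $\tfrac{1}{2}\De|\al|^2=|\nabla_A\al|^2-\Real(\bar\al\,\nabla_A^*\nabla_A\al)$ (where the discrepancy between $(\nabla_A^*\nabla_A\psi)_E$ and the scalar Laplacian of~$\al$ contributes further bounded and $M'\cdot\nabla_A\al$ terms, absorbed below), multiplying by $2|\al|^2$ and subtracting $|\nabla |\al|^2|^2$ from both sides yields exactly the form of~\eqref{eqal2} with
\[
H=2|\al|^2|\nabla_A\al|^2-|\nabla |\al|^2|^2+2|\al|^2\Real\bigl(\bar\al\,R_E\bigr)+O(1)\,.
\]
Since each of $2|\al|^2|\nabla_A\al|^2$ and $|\nabla |\al|^2|^2$ can individually be of order~$r$ by Lemma~\ref{L.TaubesEstimates}, the proposition is reduced to the delicate cancellation $|H|\leq C\bigl(1+|\nablat |\al|^2|\bigr)$.

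The hard part is precisely this cancellation. To handle it I would work in a local orthonormal frame $\{e_1,e_2,e_3\}$ with $e_1=X$ and the Pauli generators fixed as in Remark~\ref{R:local}, and set $u_j:=(\nabla_{A,e_j}\psi)_E$, $v_j:=(\nabla_{A,e_j}\psi)_{K^{-1}E}$ and $w_j:=\bar\al u_j=P_j+iQ_j$ with $P_j,Q_j\in\RR$. Projecting $D_A\psi=0$ onto the two summands of~$\SS$ yields the pointwise identities $iu_1-v_2+iv_3=0$ and $u_2+iu_3=iv_1$; combined with $|v_j|\leq|\nabla_A\be|=O(1)$ from Lemma~\ref{L.TaubesEstimates}, these give $|u_1|=O(1)$ (recovering the bound of Lemma~\ref{L.bounds2}) and the Cauchy--Riemann-type estimate $|u_2+iu_3|=O(1)$ in the transverse plane $\ker\la$, i.e.\ $P_2-Q_3=O(1)$ and $Q_2+P_3=O(1)$ in real variables. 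A short algebraic computation then produces
\[
2|\al|^2|\nabla_A\al|^2-|\nabla |\al|^2|^2=2\sum_{j=1}^{3}\bigl(Q_j^2-P_j^2\bigr)+O\bigl(1+\textstyle\sum_j|w_j|\bigr)\,,
\]
and substituting $Q_3=P_2+O(1)$ and $P_3=-Q_2+O(1)$ collapses the sum to $O(1+|P_2|+|Q_2|)$. The identity $|\nablat |\al|^2|^2=4(P_2^2+P_3^2)+O(1)$ then yields $|P_2|,|Q_2|\leq\tfrac{1}{2}|\nablat |\al|^2|+O(1)$, which is the required bound. The mixing term $2|\al|^2\Real(\bar\al\,R_E)$ is controlled by the same mechanism: expanding $R_E$ as a linear combination of $u_1$, $u_2+iu_3$ and $u_2-iu_3$ with bounded coefficients, the first two contribute $O(1)$ and the pairing of the third with $\bar\al$ produces the factor $w_2-iw_3$, whose modulus is $O\bigl(1+|\nablat |\al|^2|\bigr)$ by the same relations among the $P_j,Q_j$. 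Combining these estimates gives the claimed pointwise bound on~$H$.
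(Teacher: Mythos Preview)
Your proposal is correct and follows essentially the same route as the paper. Both arguments reduce to the identical cancellation $2|\al|^2|\nabla_A\al|^2-|\nabla|\al|^2|^2=O(1+|\nablat|\al|^2|)$, and both extract it from the same Cauchy--Riemann-type relation that the Dirac equation imposes on $\nablat_A\al$; the paper phrases this via the almost complex structure $J$ on $\ker\la$ (deriving $Ji\nablat_A\al=\nablat_A\al+\Theta\nablap_A\be+\Theta\be$), whereas you unpack it in Pauli-matrix coordinates as $u_2+iu_3=O(1)$, but these are the same identity. The only minor difference is that the paper cites the Weitzenb\"ock-type equation directly from~[T07] with an error term $G$ already free of $\nabla_A\al$ contributions, while your derivation carries a residual $M\cdot\nabla_A\al$ term that you then bound separately---this is harmless, since your bound on $|w_2-iw_3|$ handles it, but it is a detour the paper avoids.
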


\begin{proof}
As proved in~\cite[Section 6.1]{T07}, $|\al|^2$ satisfies the equation
\[
|\alpha|^2\Delta |\al|^2-2|\al|^2|\nabla_{A} \al|^2+2r|\al|^4(1-|\al|^2-|\beta|^2)=G\,,
\]
where $G$ has the form
\[
G:=-|\al|^2\Big(\tau(\alpha, \beta)+\tau(\alpha, \nabla_A \beta)+\tau(\alpha, \alpha)\Big)
\]
(the notation $\tau(\cdot, \cdot)$ will henceforth represent bilinear maps that only depend on the metric, and that may change from line to line).

Observe that, by virtue of Lemma \ref{L.TaubesEstimates}, we have the pointwise bound $|G|\leq C$. Thus, to prove the proposition it suffices to show that
\[
2|\al|^2|\nabla_A \al|^2= |\nabla |\al|^2|^2+G'\,,
\]
where $G'$ is some function of $A, \psi$ and its derivatives that satisfies the bound
$$|G'| \leq C(1+|\nablat |\al|^2|)\,.$$

First, we notice that
\begin{equation}\label{auxi}
|\al|^2 |\nabla_A \al|^2=|\Real ( \bar\al \nabla_A \al)|^2+|\Imag (\bar\al \nabla_A \al) |^2=\frac{1}{4} |\nabla |\al|^2|^2+|\Imag (\bar\al \nabla_A \al) |^2 \,.
\end{equation}

Next, for convenience we introduce some local notation. For each point~$p\in M$, one can pick two vector
fields $e_1^p,e_2^p$, defined on a neighborhood $V^p$ of~$p$, such that
$\{X(q),e_1^p(q),e_2^p(q)\}$ is an oriented orthonormal basis of the
tangent space $T_qM$ for any $q\in V^p$. For the ease of notation, we
will henceforth omit the superscript~$p$. The vectors $\{e_1(q), e_2(q)\}$ span the transverse distribution
$\Ker  \lambda$ at each point $q \in V$. We also denote by $J(q)$ the
almost complex structure on this 2-plane field defined at~$q$ by
\[
  J(e_1(q)):=e_2(q)\,,\qquad J(e_2(q)):=-e_1(q)\,.
\]
This almost complex structure does not depend on the particular choice
of orthonormal vector fields and is well defined globally on
$\Ker\la$.

Since the complex structure $J$  on $ \Ker\lambda$ preserves the scalar product, Equation~\eqref{auxi} can be rewritten as
\begin{equation}\label{auxx}
|\al|^2 |\nabla_A \al|^2=\frac{1}{4} |\nabla |\al|^2|^2+|\Imag \bar\al \nablap_A \al|^2+|\Imag (J \bar\al \nablat_A \al) |^2\,.
\end{equation}

Now the crucial observation is that one can infer from the Dirac equation $D_{A} \psi=0$ that
\begin{equation}\label{dirackey}
J i \nablat_{A} \alpha=\nablat_{A} \alpha+\Theta \nablap_{A} \beta+\Theta \beta \,.
\end{equation}
(Here and in what follows, we will use $\Theta$ to represent linear maps between the corresponding bundles that depend only on the metric.) Indeed, on the one hand, on the local frame $\{X, e_1, e_2\}$ we have
\[
J i \nablat_{A} \alpha=(i e_1 \cdot \nabla_A \al)e_2-(i e_2 \cdot \nabla_A \al) e_1 \,,
\]
and on the other hand, the Dirac equation implies the following relation between the derivatives of $\al$ and $\beta$ (see e.g.~\cite[Equation~3.7]{T09})
\begin{equation}
X \cdot \nabla_A\be = -i e_1\cdot\nabla_A\al + e_2\cdot \nabla_A\al+\Theta \be\,.
\end{equation}
Using this relation to write $e_1 \cdot \nabla_A \al$ in terms of $e_2 \cdot \nabla_A \al$, and viceversa, we readily get Equation~\eqref{dirackey}.

This understood, we can write:
\begin{align*}
\Imag (J \bar\al \nablat_A \al)=-\frac{1}{2} J \big(i\bar\al \nablat_A \al-i (\overline{\nablat_A \al}) \al \big)=-\frac{1}{2}\big( \bar\al (J i \nablat_A \al)+(\overline{J i \nablat_A \al}) \al \big)=\\=-\Real(\bar\al \nablat_A \al)-\tau(\alpha, \nablap_A \beta)-\tau(\alpha, \beta)\,,
\end{align*}
where we have used Equation~\eqref{dirackey} in the last equality.

Recalling that $\Real (\bar\al \nablat_A \alpha)= \frac{1}{2} \nablat |\al|^2$, we obtain
\[
|\Imag (J \bar\al \nablat_A \al) |^2=\frac{1}{4}|\nablat |\al|^2|^2+\nablat |\al|^2 \cdot \big( \tau(\alpha, \nablap_A \beta)+\tau(\alpha, \beta) \big)+|\tau(\alpha, \nablap_A \beta)+\tau(\alpha, \beta)|^2\,.
\]
Plugging this identity into Equation~\eqref{auxx} we easily infer that
\begin{align*}
|\al|^2 |\nabla_A \al|^2=\frac{1}{4} |\nabla |\al|^2|^2+\frac{1}{4}|\nablat |\al|^2|^2+|\Imag \bar\al \nablap_A \al|^2+\nablat |\al|^2 \cdot \big( \tau(\alpha, \nablap_A \beta)+\tau(\alpha, \beta) \big)\\+|\tau(\alpha, \nablap_A \beta)+\tau(\alpha, \beta)|^2\,,
\end{align*}
so substituting $|\nablat |\al|^2|^2$ by $|\nabla |\al|^2|^2-|\nablap |\al|^2|^2$
we finally obtain
\begin{align*}
|\al|^2 |\nabla_A \al|^2=\frac{1}{2}|\nabla |\al|^2|-\frac{1}{4}|\nablap |\al|^2|^2+|\Imag \bar\al \nablap_A \al|^2+\nablat |\al|^2 \cdot \big( \tau(\alpha, \nablap_A \beta)+\tau(\alpha, \beta) \big)\\+|\tau(\alpha, \nablap_A \beta)+\tau(\alpha, \beta)|^2\,.
\end{align*}
The proposition then follows taking into account the bounds in Lemmas~\ref{L.TaubesEstimates} and~\ref{L.bounds2}.
\end{proof}

\section{A maximum principle for solutions with unbounded energy}
\label{S.max}

In this section we prove a new maximum principle for solutions of the modified Seiberg--Witten equations. Specifically, we establish a dichotomy for the large~$r$ behavior of local minima of~$|\al|^2$ on small disks: either they are close to $0$ or close to $1$ as $r\to\infty$. The main consequence of this result is Theorem~\ref{T.pt} below, which is instrumental in the proof of Theorem~\ref{T.2}. We stress that all the
constants appearing in this section are independent of~$r$. In what follows, $(r,\psi,A)$ is a sequence of solutions as in Taubes's Theorem~\ref{T.existence}, and we recall that $\psi=(\alpha,\beta)$.

\begin{theorem}\label{T.mprincip}
  Let $\rho:(1,\infty)\to (0,1)$ be any continuous function with
  \[
\lim_{r\to\infty}\rho(r)=0\,.
  \]
Suppose that $\Sigma$ is a disk of radius $\rho(r)$, embedded in $M$,
transverse to the vector field~$X$
and perpendicular to it at some point $p$. If a point $q \in \Sigma$ is a local minimum of the restriction of
$|\al|^2$ to~$\Si$, then either
\[
|\al (q)|^2 \leq C\eta(r)^{1/2 }
\]
or
\[
\big||\al (q)|^2-1\big| \leq C\eta(r) \,.
\]
Here
\[
\eta(r):= r^{-1/2}+\rho(r)^2
\]
is another continuous function that tends to~$0$ at infinity.
\end{theorem}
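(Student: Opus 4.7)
The plan is to evaluate the elliptic identity of Proposition~\ref{P.albe} at the local minimum $q$, using three geometric facts: (i)~the gradient of $|\al|^2$ tangential to $\Si$ vanishes at $q$; (ii)~the restricted Laplacian $\Delta^\Si|\al|^2(q)\geq 0$; and (iii)~the unit normal $\nu$ to $\Si$ at $q$ is close to $X(q)$, since $\Si$ is transverse to~$X$, perpendicular at~$p$, and of radius $\rho(r)$, so that $|\nu-X(q)|\leq C\rho(r)$. Writing $f:=|\al|^2$ for brevity, the whole argument reduces to suitable pointwise control at $q$ of $|\nabla f|$, $|\nablat f|$, and $\Delta f$.

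Because $\nabla f(q)$ is normal to $\Si$, I can write $\nabla f(q)=c\,\nu$. The identity $\nablap f(q)=c(X(q)\cdot\nu)\,X(q)$ combined with the bound $|\nablap|\al|^2|\leq C$ from Lemma~\ref{L.bounds2} forces $|c|\leq C$ (for $r$ large so that $X\cdot\nu\geq 1/2$), whence $|\nabla f(q)|\leq C$ and $|\nablat f(q)|=|c|\,|\nu-(X\cdot\nu)X|\leq C\rho(r)$. By Proposition~\ref{P.albe} this already yields $|H(\psi,A)(q)|\leq C$. For the Laplacian I decompose
\[
\Delta f(q) = \Delta^\Si f(q) + \nabla^2 f(\nu,\nu) + (\text{second fundamental form of }\Si)\cdot\nabla f(q)\,,
\]
observing that the first summand is nonnegative and the third, being the pairing of $\nabla f(q)$ against a bounded vector, is $O(1)$. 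For $\nabla^2 f(\nu,\nu)$ the key is to combine the full Hessian bound $|\nabla^2 f|\leq Cr$ with the sharper anisotropic bound $|\nabla^2 f(X,V)|\leq C\sqrt r$ for any unit $V$ (which follows from $|\nabla(\nablap f)|\leq C\sqrt r$ of Lemma~\ref{L.bounds2}) and the expansion $\nu = X+O(\rho(r))$:
\[
|\nabla^2 f(\nu,\nu)|\leq C\sqrt r + C\sqrt r\,\rho(r) + Cr\,\rho(r)^2 \leq Cr\eta(r)\,.
\]
Thus $\Delta f(q)\geq -Cr\eta(r)$.

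Plugging these estimates into Proposition~\ref{P.albe} at $q$, and using $f^2|\be|^2\leq C/r\leq C\eta(r)$ from Lemma~\ref{L.TaubesEstimates}, the identity reduces to
\[
2r f(q)^2\bigl(1-f(q)\bigr)\leq Cr\eta(r)\bigl(1+f(q)\bigr)\,,
\]
or equivalently $f(q)^2(1-f(q))\leq C\eta(r)(1+f(q))$. A two-case analysis now closes the argument. If $f(q)\leq 1/2$, the bounds $1-f\geq 1/2$ and $1+f\leq 3/2$ give $f(q)^2\leq C\eta(r)$, so $f(q)\leq C\eta(r)^{1/2}$. If instead $f(q)>1/2$, the bounds $f^2\geq 1/4$ and $1+f\leq 3$ give $1-f\leq C\eta(r)$, and combining with the universal lower bound $1-f\geq -C/r\geq -C\eta(r)$ of Lemma~\ref{L.TaubesEstimates} yields $\bigl||\al(q)|^2-1\bigr|\leq C\eta(r)$.

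The main obstacle will be the Laplacian step: the generic Hessian bound $|\nabla^2 f|\leq Cr$ is too weak by a factor $\sqrt r$ for the argument to close, and it is essential to exploit both the anisotropic bound $|\nabla^2 f(X,\cdot)|\leq C\sqrt r$ and the $O(\rho(r))$ deviation of $\nu$ from $X$. The specific function $\eta(r)=r^{-1/2}+\rho(r)^2$ is precisely what emerges from balancing these two sources of error.
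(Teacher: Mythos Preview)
Your proof is correct and follows essentially the same approach as the paper's: evaluate the identity of Proposition~\ref{P.albe} at the local minimum~$q$, control $\nabla|\al|^2$, $\nablat|\al|^2$, and $\Delta|\al|^2$ there using the minimum condition together with the anisotropic bounds of Lemma~\ref{L.bounds2} and the deviation $|\nu-X|\leq C\rho(r)$, and conclude with an elementary algebraic dichotomy. Your handling of the gradient is in fact slightly cleaner than the paper's---by writing $\nabla f(q)=c\,\nu$ and reading off $|c|\leq C$ from the $X$-component, you obtain $|\nabla f(q)|\leq C$ directly, whereas the paper only gets $|\nabla f(q)|\leq C(1+\sqrt r\,\rho(r))$; both bounds are sufficient for the final estimate.
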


\begin{proof}

Let us start by noticing that the Laplacian (on~$\Si)$ of the restriction of a
scalar function~$f$ to the surface~$\Si$ , which we denote by $\De_\Si
f$, and the restriction to~$\Si$ of the Laplacian of~$f$ are related
through the following  formula:
\begin{align*}
  \De f|_\Si&= \sum_{j=1}^2 \nabla^{2} f (V_j, V_j) + \nabla^{2} f(N, N)\\
  &= \De_\Si f + N \cdot \nabla( N \cdot \nabla f)+ \sum_{j=1}^2 (\Div
    V_j -\Div_\Si V_j)\, V_j\cdot\nabla f + (\Div N) N \cdot \nabla f\,.
\end{align*}
Here $\{V_1,V_2, N\}$ is a local orthonormal basis of
the tangent space of~$M$ chosen so that $N$ is perpendicular to $\Sigma$ at every point. Further, $\Div_\Si V_j$
denotes the divergence of the vector field~$V_j$ (which is
tangent to~$\Si)$ with respect to the induced area form on~$\Si$, $i_{N} \mu$.

If the point $q$ is a local minimum of the restriction of $|\al|^2$ to~$\Si$,
it follows that, at~$q$,
\[
\De_\Si |\al|^2\geq0\,,\qquad \nabla_{\Sigma} |\al|^2=0\,,
\]
where the gradient on~$\Si$ is
\[
\nabla_{\Sigma} |\al|^2= \nabla |\al|^2-(N\cdot \nabla |\al|^2)\, N\,.
\]
Accordingly, the fact that $\nabla_\Si|\al|^2(q)=0$ implies
\[
\nabla |\al|^2(q)= (N\cdot \nabla |\al|^2)\, N(q)=(N-X) \cdot \nabla |\al|^2(q)+X \cdot \nabla |\al|^2(q)\,.
\]
In then follows from the a priori estimates in Lemma~\ref{L.bounds2} and the obvious bound $\|X-N\|_{L^\infty(\Sigma)} < C\rho(r)$, that
\[
|\nabla |\al|^2(q)|\leq C[1+\sr\,\rho(r)]\,.
\]

In view of the formula for~$\De |\al|^2|_\Si$, and
using again that $\|X-N\|_{L^\infty(\Sigma)} < C\rho(r)$, we
infer that, always at the point~$q$,
\begin{align*}
  \De |\al|^2 (q)&\geq  N \cdot \nabla( N \cdot \nabla  |\al|^2)+ (\Div N) N \cdot \nabla  |\al|^2\\
                      &= (N-X) \cdot \nabla((N-X)\cdot \nabla
                        |\al|^2) +  X \cdot \nabla((N-X)\cdot \nabla
                        |\al|^2) \\
  &\qquad+  (N-X) \cdot \nabla(X\cdot \nabla |\al|^2) +  X \cdot \nabla(X\cdot \nabla
                        |\al|^2)+ (\Div N) N \cdot \nabla  |\al|^2\\\
  &\geq -C\rho(r)^2|\nabla^2 |\al|^2| -
    C\rho(r)|\nabla\nablap|\al|^2| -|\nablap\nablap|\al|^2|- C |
    \nabla |\al|^2|\\
  &\geq -C[1+r^{1/2}\rho(r) +r\, \rho(r)^2]\,.
\end{align*}

On the other hand, Proposition~\ref{P.albe} allows us to write
\begin{equation*}
 r|\al|^{4}(1-|\al|^{2}-|\be|^2)\leq
- \frac{1}{2}|\al|^{2}\Delta |\al|^{2}+ \frac{1}{2} {|\nabla
  |\al|^2|^2}
+C\big( 1+|\nablat|\al|^2| \big)\,.
\end{equation*}
If we now evaluate at~$q$ the inequalities that we have derived and
invoke the bounds obtained in Lemma~\ref{L.bounds2}, we
infer that, at the point~$q$
\begin{align*}
r|\al|^{4}(1-|\al|^{2}-|\be|^2)\leq C [1+r^{1/2} \rho(r) + r\rho(r)^2]\,.
\end{align*}
Since $\be\to0$ as $r\to\infty$ by Lemma~\ref{L.TaubesEstimates},
we finally conclude that
\[
|\al|^{4}(1-|\al|^{2})\leq   C[r^{-1}+r^{-1/2}\rho(r)+\rho(r)^2] \leq C[r^{-1/2}+\rho(r)^2]
\]
at the point~$q$. This is the bound stated in the theorem.
\end{proof}

The main strength of the maximum principle stated in Theorem~\ref{T.mprincip} is that it does not assume that the sequence of solutions $(r,\psi,A)$ has uniformly bounded energy. The following result exploits this property to show that if the energy growth is smaller than $r^{1/2}$, there are points on $M$ where $|\alpha|^2\to 0$. This turns out to be an effective alternative to Taubes's local analysis of solutions with bounded energy using the vortex equation, and it will be crucially used in the proof of Theorem~\ref{T.2}.

In the proof, it is convenient to use suitable flow boxes adapted to
the vector field~$X$. To define a flow box, let $p$ be any point in $M$ and $\{e_1,e_2,X\}$ an orthonormal basis at $T_pM$. Consider, for positive constants $\ep$ and $R$, the map
\[
\Phi_p : (0, \ep) \times \mathbb{D}_R\longrightarrow M\,,
\]
defined by
\[
\Phi_p(t, x):=\phi^{t}_{X} \big(\exp_{p} (x_1 e_1(p)+x_2 e_2 (p))\big)\,,
\]
where $\DD_R:=\{x \in \RR^2: |x| < R \}$ is the disk of
radius $R$, $\phi_X^t$ is the time-$t$ flow of~$X$, and
$\exp_p: T_p M \rightarrow M$ is the exponential map. With $\ep$
and $R$ small enough, $\Phi_p$ is a smooth diffeomorphism into its image, which we will
denote by $\cC_p(R, \ep)$. From now on, we will refer to $\cC_p(R, \ep)$ as the \emph{flow box} based at $p$ of radius $R$ and
length~$\ep$.

\begin{theorem}\label{T.pt}
Let $(r_n,\psi_n,A_n)$ be a sequence of solutions to the
Seiberg--Witten equations with $r_n\to\infty$ and such that
$\cE_n=o(r_n^{1/2})$, i.e.,
\[
\limsup_{n\to\infty}\cE_nr_n^{-1/2}=0\,.
\]
Let $\{p_n\}$ be a sequence of points in $M$ for which there is a positive constant $\te$ such that
\[
1-|\al_n|^2(p_n)\geq \te\,.
\]
Then there is a constant $L$ (independent of $n$) such that the following holds: if $n$ is large
enough, there are disks~$\Si_n\subset M$ of radius $\rho_n:= L\cE_n r_n^{-1/2}$,
transverse to the vector field~$X$
and perpendicular to it at~$p_n$, and points $q_n\in \Si_n$ such that
\[
|\al_n|^2(q_n)\leq C(r_n^{-1/4}+\rho_n)\,.
\]
\end{theorem}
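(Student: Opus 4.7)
Set $\eta_n:=r_n^{-1/2}+\rho_n^2$. Since $\rho_n=L\cE_nr_n^{-1/2}\to 0$ (because $\cE_n=o(r_n^{1/2})$), we have $\eta_n\to 0$; moreover $\sqrt{a+b}\leq\sqrt a+\sqrt b$ gives
\[
\eta_n^{1/2}\leq r_n^{-1/4}+\rho_n,
\]
so the desired conclusion $|\al_n(q_n)|^2\leq C(r_n^{-1/4}+\rho_n)$ is precisely the ``close to $0$'' alternative in the dichotomy of Theorem~\ref{T.mprincip} applied at $q_n$. The plan is therefore to choose $\Si_n$ and $q_n$ so that Theorem~\ref{T.mprincip} applies at $q_n$, and to verify that the ``close to $1$'' alternative is automatically excluded.

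The natural first attempt is to let $\Si_n$ be the geodesic disk of radius $\rho_n$ perpendicular to $X$ at $p_n$ (obtained, say, by applying the exponential map to the planar disk of radius $\rho_n$ in $X(p_n)^\perp\subset T_{p_n}M$) and to take $q_n$ to be a minimizer of $|\al_n|^2$ restricted to $\Si_n$. Since $p_n\in\Si_n$, one has $|\al_n(q_n)|^2\leq|\al_n(p_n)|^2\leq 1-\theta$; for $n$ large enough $C\eta_n<\theta/2$, so the ``close to $1$'' alternative is ruled out. Hence, \emph{provided that $q_n$ is an interior point of $\Si_n$}, Theorem~\ref{T.mprincip} delivers the desired bound $|\al_n(q_n)|^2\leq C\eta_n^{1/2}\leq C(r_n^{-1/4}+\rho_n)$.

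The key technical point is to arrange that $q_n$ is in fact interior. Here I exploit the freedom that Theorem~\ref{T.mprincip} only requires $\Si_n$ to be perpendicular to $X$ \emph{at the single point} $p_n$, not centered at it. I consider the two-parameter family of disks $\Si_n^c:=\exp_{p_n}(D_c(\rho_n))$, with $D_c(\rho_n)\subset X(p_n)^\perp$ the planar disk of radius $\rho_n$ centered at $c$, and $c$ ranging over $D_0(\rho_n)$ (so that $0=\exp_{p_n}^{-1}(p_n)\in D_c(\rho_n)$, i.e., $p_n\in\Si_n^c$). The union $\bigcup_c\Si_n^c$ is the larger disk $\tilde\Si_n:=\exp_{p_n}(D_0(2\rho_n))$. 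Let $\tilde q_n$ be a minimizer of $|\al_n|^2$ on $\tilde\Si_n$, and write $\tilde q_n=\exp_{p_n}(\tilde v)$. If $|\tilde v|<2\rho_n$, then taking $c:=\tilde v/2$ one has $|c|<\rho_n$ and $|\tilde v-c|<\rho_n$, so $\tilde q_n$ is an interior point of the $\rho_n$-disk $\Si_n:=\Si_n^c$; being the minimum on $\tilde\Si_n$, it is a fortiori a local minimum of $|\al_n|^2|_{\Si_n}$, and the argument of the previous paragraph concludes.

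The main obstacle is the degenerate case in which $\tilde q_n$ lies exactly on the outer boundary of $\tilde\Si_n$ (i.e., $|\tilde v|=2\rho_n$). This is where the hypothesis $\cE_n=o(r_n^{1/2})$ and the choice of the constant $L$ large enough must be used. The idea is that in this degenerate case, the entire disk $\tilde\Si_n$ must have $|\al_n|^2\geq|\al_n(\tilde q_n)|^2$, and in particular at $p_n$ this forces the minimum value to be $\leq 1-\theta$; propagating this along the flow of $X$ using the anisotropic bound $|\nablap|\al_n|^2|\leq C$ from Lemma~\ref{L.bounds2}, one obtains a three-dimensional flow box of radius $\sim\rho_n$ and length of order $\theta$ on which $|\al_n|^2\leq 1-\theta/2$. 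Integrating $r_n(1-|\al_n|^2)$ over this box and invoking the relation $\cE_n\sim\int_M r_n(1-|\al_n|^2)\,\mu$ from~\eqref{sigma_n}, one would derive a lower bound on $\cE_n$ of order $L\cE_n$ (where the factor $L$ comes from the radius $\rho_n=L\cE_nr_n^{-1/2}$ and the transverse gradient bound $|\nablat|\al_n|^2|\leq C\sqrt{r_n}$). For $L$ sufficiently large this contradicts the hypothesis, ruling out the degenerate case. Executing this energy-based dichotomy cleanly is the hardest part of the argument.
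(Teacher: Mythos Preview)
Your overall strategy---apply Theorem~\ref{T.mprincip} at an interior local minimum of $|\al_n|^2|_{\Si_n}$ with value $\leq 1-\te$, and use an energy argument to rule out the case where no such interior minimum exists---is precisely the paper's plan. The gap is in your handling of the degenerate case.

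Your claim that one obtains ``a three-dimensional flow box of radius $\sim\rho_n$ \dots on which $|\al_n|^2\leq 1-\te/2$'' is not justified. Knowing that the global minimum of $|\al_n|^2$ over $\tilde\Si_n$ lies on the boundary gives only a \emph{lower} bound for $|\al_n|^2$ throughout the disk, not an upper one; the only points where you actually know $|\al_n|^2\leq 1-\te$ are $p_n$ and $\tilde q_n$ themselves. From a single such point the transverse gradient bound $|\nablat|\al_n|^2|\leq C\sqrt{r_n}$ only propagates the inequality to a disk of radius $\sim\te\, r_n^{-1/2}$, much smaller than $\rho_n$. The resulting flow box contributes energy of order $\te^4$, independent of $L$, so enlarging $L$ produces no contradiction.

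The paper repairs this by replacing your global-minimum dichotomy with a connected-component argument. One considers the component $\cV_n$ of $\{q\in\overline{\Si_n}:|\al_n|^2(q)\leq 1-\te\}$ containing $p_n$. If $\cV_n\cap\partial\Si_n=\emptyset$, the minimum of $|\al_n|^2$ on $\cV_n$ is an interior local minimum with value $\leq 1-\te$, and Theorem~\ref{T.mprincip} finishes. Otherwise there is a continuous curve in $\Si_n$ from $p_n$ to $\partial\Si_n$, of length at least $\rho_n$, along which $|\al_n|^2\leq 1-\te$ \emph{everywhere}. Along this curve one places roughly $\rho_n\sqrt{r_n}/\te\sim L\cE_n/\te$ pairwise disjoint flow boxes, each of transverse radius $\sim\te\, r_n^{-1/2}$ and length $\sim\te$; by Lemma~\ref{L.charge1} each contributes $\gtrsim\te^4/\cE_n$ to the probability measure $\si_n$, so the total is $\gtrsim L\te^3$, contradicting $\si_n(M)=1$ once $L$ is large. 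It is the \emph{curve}---a one-dimensional set of length $\rho_n$ in the sublevel set---rather than a single point that makes the factor $L$ appear in the energy count.
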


\begin{proof}
The existence of the sequence of points $\{p_n\}$ in ensured by Theorem~\ref{T.existence}, so let us take disks $\Sigma_n$ centered at $p_n$ as in the statement. We claim that the bound on the energy growth ensures that there
is a local minimum of $|\al_n|^2|_{\Si_n}$ in the interior of the disk~$\Si_n$, provided that $n$ is large
enough. In order to prove this, we proceed by contradiction.

Consider the connected component $\cV_n$ of the compact set
\[
\{ q\in\overline{\Si_n}: 1-|\al_n|^2(q)\geq \te\}
\]
that contains the point~$p_n$. Let us assume that $\cV_n\cap \pd\Si_n$
is nonempty. Then there exists a continuous curve
$\Ga_n:[0,1)\to \Si_n$ with $\Ga_n(0)=p_n$ and
$\overline{\Ga_n([0,1))}\cap \pd\Si_n\neq \emptyset$ such that
\[
1-|\al_n|^2(\Ga_n(s))\geq \te
\]
for all $s\in [0,1)$.

Take a small enough constant $\La>0$ that will be fixed later. Since the length of the curve $\Ga_n([0,1))$ is
at least $\rho_n$, one can take at least
$$K_n:=\frac{r_n^{1/2}\rho_n}{2 \La\theta}$$
pairwise disjoint flow-boxes
\[
\cC_{p_{n,k}}\bigg(\frac{\La\te}{\srn}, \La\te
\bigg)\,
\]
centered at different points $\{p_{n, k}\}_{k=1}^{K_n}$ lying on the image of the curve $\Gamma_{n}$, with $p_{n,1}:= p_n$.
If $\La<\La_0$, Lemma~\ref{L.charge1} below and the
definition of~$\rho_n$ imply that the signed measures $\si_n$ (cf. Equation~\ref{sigma_n}) satisfy
\begin{align*}
\si_n(M)\geq \bigcup_{k=1}^{K_n}\si_n\bigg(\cC_{p_{n,k}}\bigg(\frac{\La\te}{\srn}, \La\te
\bigg)\bigg)-C\cE_n^{-1} &\geq \frac{c_0}{2} \La^2 \te^3 r_n^{1/2}\rho_n \cE_n^{-1}-C\cE_n^{-1}\\
&\geq \frac{c_0}{2} \La^2 \theta^{3} L-C\cE_n^{-1}\,.
\end{align*}
Here the constant $c_0$ comes from Lemma~\ref{L.charge1} and does not depend on $n$ or $L$.

We then infer that picking a large enough constant~$L$ in the definition of $\rho_n$ yields a contradiction with the fact that $\sigma_{n}(M)=1$ (even in the case that $\cE_n$ is uniformly bounded). Therefore, $\cV_n \cap \partial \Sigma_n$ is empty and the compactness of $\cV_n$ implies that, for large enough~$n$, there is a global minimum~$q_n$ of~$|\al_n|^2|_{\cV_n}$ on~$\cV_n$.

Since $|\al_n|^2(q_n)\leq 1-\te$, the maximum principle stated in
Theorem~\ref{T.mprincip} allows us to write the bound
\[
|\al_n|^2(q_n)\leq C(r_n^{-1/4}+\rho_n)\,,
\]
which completes the proof of the theorem.
\end{proof}

The following technical lemma is invoked in the proof of Theorem~\ref{T.pt}. We use the same notation as before.

\begin{lemma}\label{L.charge1}
 Let $(r,\psi,A)$ be a sequence of solutions to the modified Seiberg--Witten
 equations. Assume that $p$ is a point in $M$ such that
 $1-|\al|^2(p)\geq \te$ for some uniform $0<\theta<1$. Then there are positive constants~$c_0$ and~$\La_0$,
 independent of~$\te$ and~$r$, such that
\[
\sigma_r \bigg(\cC_{p}\bigg(\frac{\La\te}{\sr}, \La\te
\bigg)\bigg) \geq \frac{ c_0\La^3\te^4}{\cE_r}
\]
for all $\La<\La_0$.
\end{lemma}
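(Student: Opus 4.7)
The plan is to combine the anisotropic a priori estimates of Lemma~\ref{L.bounds2} with the identity~\eqref{sigma_n}. The whole point of choosing the two different scales $R=\La\te/\sr$ and $\ep=\La\te$ for the flow box is that they are exactly matched to the anisotropic scales $\sr$ and $1$ on which $|\al|^2$ can vary, so a single small choice of $\La_0$ will control the oscillation of $|\al|^2$ across the box uniformly in $r$ and $\te$.

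Step 1 (pointwise control). Using
\[
|\nablap|\al|^{2}|\leq C,\qquad |\nablat|\al|^{2}|\leq C\sr,
\]
I propagate the inequality $1-|\al|^{2}(p)\geq\te$ to every point of $\cC_p(R,\ep)$. A point $q=\Phi_{p}(t,x)$ is joined to $p$ by a radial geodesic in $\exp_{p}(\DD_R)$ of length $|x|\leq R$ followed by an integral curve of $X$ of time $t\leq\ep$. Along the geodesic the tangent is unit, starts perpendicular to $X(p)$ and acquires a parallel component of size $O(s)$; splitting $\gamma'=\gamma'_{\parallel}+\gamma'_{\perp}$ gives a change of at most $C\sr\cdot R+CR^{2}$. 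Along the integral curve the tangent is $X$, contributing at most $C\ep$. Altogether,
\[
\bigl||\al|^{2}(q)-|\al|^{2}(p)\bigr|\leq C\sr R+CR^{2}+C\ep\leq C'\La\te,
\]
so for $\La_0$ small enough (depending only on the geometry), $1-|\al|^{2}(q)\geq\te/2$ throughout $\cC_p(R,\ep)$.

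Step 2 (volume and conclusion). Since $\Phi_p$ has unit Jacobian at $p$, shrinking $\La_0$ ensures the Jacobian stays above $\tfrac12$ on the box, so
\[
\Vol(\cC_p(R,\ep))\geq\tfrac12\pi R^{2}\ep=c''\La^{3}\te^{3}/r.
\]
Inserting the bound of Step~1 into~\eqref{sigma_n},
\[
\si_r\bigl(\cC_p(R,\ep)\bigr)=\frac{1}{\cE_r}\int_{\cC_p(R,\ep)} r(1-|\al|^{2})\,d\mu+O\!\left(\Vol(\cC_p)/\cE_r\right),
\]
and using that in the box the main integrand is at least $r\te/2$, I obtain
\[
\si_r\bigl(\cC_p(R,\ep)\bigr)\geq\frac{c''\La^{3}\te^{4}}{2\cE_r}-\frac{C'''\La^{3}\te^{3}}{r\cE_r}\geq\frac{c_{0}\La^{3}\te^{4}}{\cE_r},
\]
where the last inequality absorbs the error in the regime $r$ large (which is where the lemma is applied in Theorem~\ref{T.pt}), and $c_{0}$ depends only on the background geometry.

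The main technical obstacle is Step~1: the tangent of the radial geodesic is \emph{not} exactly perpendicular to $X$ away from $p$, and without separating its parallel and transverse components one would be tempted to use the worse bound $|\nabla|\al|^{2}|\leq C\sr$ everywhere, which would only yield a change of order $\sr R=\La\te$ from the transverse piece but would be disastrous on the longer $X$-direction of length $\ep$. Exploiting instead that the parallel component of $\gamma'$ vanishes to order $s$ is what makes the dimensions match and delivers the uniform $\La_{0}$.
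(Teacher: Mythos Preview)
Your proof is correct and follows essentially the same approach as the paper: use the anisotropic bounds of Lemma~\ref{L.bounds2} to propagate $1-|\al|^2\geq\te/2$ throughout the flow box, estimate the volume, and plug into~\eqref{sigma_n}. Your final paragraph slightly overstates the difficulty along the radial geodesic---since its length is $R=\La\te/\sr$, even the crude bound $|\nabla|\al|^2|\leq C\sr$ already gives a change of order $\La\te$ there, so the $O(s)$ vanishing of the parallel component is not actually needed; the anisotropy is only essential along the $X$-direction, exactly as you note.
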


\begin{proof}

First, Equation~\eqref{sigma_n} implies that, for any open set $U \subset M$

\[
 \sigma_r (U) \cE_r \geq r \int_U (1-|\al|^2)\mu-C \mu(U)
\]
for some constant $C$ independent of $r$.

Since $1 -|\al|^2\geq\te$ at $p$, it follows from the a priori estimates for the derivatives of~$|\al|^2$
in Lemma~\ref{L.bounds2} that
\[
1 -|\al|^2\geq\frac\te2
\]
in a flow box of the form $\cC_{p}(\frac{\La\te}{\sr}, \La\te)$, provided
that the constant $\La$ is smaller than some constant~$\La_0$ (independent of $r$ and $\theta$). Therefore
\begin{align*}
\cE_r \sigma_r \bigg(\cC_{p}\bigg(\frac{\La\te}{\sr}, \La\te
\bigg)\bigg) \geq r
\int_{\cC_{p}(\frac{\La\te}{\sr}, \La\te
)} (1 -|\al|^2)\mu-C\mu\bigg(\cC_{p}\bigg(\frac{\La\te}{\sr}, \La\te
\bigg)\bigg) \geq
                                                           \\
\geq \mu\bigg(\cC_{p}\bigg(\frac{\La\te}{\sr}, \La\te
\bigg)\bigg) \bigg(\frac{\te r}2-C \bigg)\geq
 C\La^3\te^4+O(r^{-\frac{1}{2}})
\geq c_0\La^3\te^4\,,
\end{align*}
as claimed.

\end{proof}

\section{Nodal sets and limiting measures: proof of Theorem~\ref{T.2}}\label{S.T2}

In most of this section we are concerned with solutions of the modified Seiberg--Witten equations whose energy is bounded as
\[
\limsup_{n\to\infty}\frac{\cE_n}{r_n^{1/2}}=0\,.
\]
Specifically, in Sections~\ref{S.step1} and~\ref{S.step2} we prove Theorem~\ref{T.2}, which establishes a connection between some invariant sets of the vector field $X$ and the set of points where $|\alpha_n|\to 0$. Our proof exploits the new maximum principle presented in Theorem~\ref{T.pt}. In particular, since it applies to solutions with uniformly bounded energy, this allows us to obtain an alternative proof of Taubes's theorem on the existence of periodic orbits without using the vortex equations, as discussed in Section~\ref{S.orbits}. Finally, in Section~\ref{S.open1}, we prove Proposition~\ref{L.con1}, which is a sort of converse to Theorem~\ref{T.2}: the open sets of $M$ where $|\alpha_n|\to 1$ do not charge the invariant measure $\sigma_\infty$. No constraint on the energy growth is assumed in this case.

\subsection{Step 1: construction of an invariant set}\label{S.step1}

We first observe that we can define the sets $Z_n^\theta$ and $Z_n$ using $|\alpha_n|^2$ rather than $|\psi_n|^2$ (by the a priori estimates in Lemma~\ref{L.TaubesEstimates}). Let us pick any $\theta\in(0, 1)$. By Theorem~\ref{T.existence}, the compact set
\[
Z^{\theta}_n:=\{p \in M,\, 1-|\al_n|^2(p)\geq\te \}
\]
is non-empty for $\theta$ small enough, and in fact, by Theorem \ref{T.mprincip}, it is non-empty for any $\te\in(0, 1)$ and all large enough $n$.

Fix a subsequence which converges in the Hausdorff metric, which we still denote by $Z_n^\theta$, and let $Z^{\theta}_{\infty}$ be its limit. In this step, our aim is to show that $Z^{\theta}_{\infty}$ is invariant under the flow of $X$. Notice that by compactness of $M$, non-empty compact subsets of $M$ with the Hausdorff metric form a compact metric space, so such a limiting set always exists and it is not the empty set.

Before proceeding, let us explain the main idea of the proof. For any point $p \in Z^{\theta}_{\infty}$, we will show that there is a set $S_p$ containing $p$, contained in $Z^{\theta}_{\infty}$, and invariant under the flow of $X$. This clearly implies that $Z^{\theta}_{\infty}$ is invariant. The set $S_p$ will turn out to be the closure of the orbit of $X$ passing through $p$.

The Hausdorff convergence implies that, for any $p \in Z^{\theta}_{\infty}$, there is a sequence of points $p_n \in Z^{\theta}_n$ converging to $p$. By definition, the points $p_n$ satisfy
\[
1-|\al_n|^2(p_n)\geq\te
\]
for all~$n$, with $\te>0$.

Since $\cE_n=o(r_n^{-1/2})$, it follows from Theorem~\ref{T.pt} that for each large enough $n$, there exists a point $q_n^1$ on a disk $\Si_n^1$
  centered at~$p_n$, of radius at most
  \[
    \rho_n:=cr_n^{-1/2}\cE_n
  \]
  and orthogonal to~$X$
  at~$p_n$, such that
  \[
|\al_n(q_n^1)|^2<\ep_n\,,
\]
where $\ep_n:=C(r_n^{-1/4}+ \rho_n)$.  We then consider the cylinder
\[
  \cC_{n,1}:=\cC_{q_n^1}(\rho_n,T)
\]
of radius~$\rho_n$ centered at this point and length $T$, where~$T$ is a small positive constant
independent of~$n$.

Consider the point $\tq_n^1:=\phi_X^T(q_n^1)$ and take a
  disk $\Si_n^2$ centered at~$\tq_n^1$, of radius $\rho_n$ and orthogonal to~$X$
  at~$\tq_n^1$. The bounds for the derivatives of~$|\al|^2$
  (Lemmas~\ref{L.TaubesEstimates} and~\ref{L.bounds2}) ensure that
  \[
|\al_n(\tq_n^1)|^2<\ep_n+ CT\,.
  \]
Hence if $T$ is small, Theorem~\ref{T.pt} ensures again that there exists a point
$q_n^2\in\Si_n^2$ such that
\[
|\al_n(q_n^2)|^2<\ep_n\,.
\]
Let us now define the flow-box
\[
\cC_{n,2}:=\cC_{q_n^2}(\rho_n,T)
\]
and note that the volumes of~$\cC_{n,k}$ (with $k=1,2$) and of the
intersection $\cC_{n,1}\cap \cC_{n,2}$ can be estimated as
\begin{align*}
  \mu(\cC_{n,k}) &> C\rho_n^2\,,\\
  \mu(\cC_{n,1}\cap \cC_{n,2}) &< C\rho_n^3\,.
\end{align*}
Since $\rho_n\to0$, this means that, for large enough~$n$,  the volume of the
intersection~$\cC_{n,1}\cap \cC_{n,2}$ is just a small fraction of that of
either of the cylinders.

By repeating the argument (considering both the forward flow of~$X$
and the backward flow), one obtains a sequence of points $(q_n^k)_{k\in\ZZ}$,
which give rise to flow boxes $\cC_{n,k}:=\cC_{q_n^k}(\rho_n,T)$
satisfying
\begin{align*}
  \mu(\cC_{n,k}) &> C\rho_n^2\,,\\
  \mu(\cC_{n,k}\cap \cC_{n,k+1}) &< C\rho_n^3\,,\\
    |\al_n(q_n^k)|^2&<\ep_n
\end{align*}
for all~$k$ and all large enough $n$.

Consider now, for some constant $D$ independent of $n$, the thinner cylinders $\widetilde{\cC_{n, k}}:=\cC_{q_n^k}(Dr_n^{-\frac{1}{2}},T)\subset \cC_{n,k}$. If $D$ and $T$ are chosen small enough, the bounds for the derivatives of~$|\al|^2$
in Lemmas~\ref{L.TaubesEstimates} and~\ref{L.bounds2} ensure that for any point $q' \in \widetilde{\cC_{n, k}}$ we have
\begin{equation}\label{thincyl}
|\al_n(q')|^2 <\ep_n+C (D+T) < 1-\te
\end{equation}
for all $k$ and all large enough $n$. For each positive integer~$K$, let us set
\[
S_{n,K}:= \bigcup_{k=-K}^K \widetilde{\cC_{n,k}}\,.
\]
By construction, $S_{n,K}$ is contained in a neighborhood of width
$K\rho_n$ of the portion
\[
S_{K}^n:= \{\phi_X^t p_n : |t|\leq KT\}
\]
of the integral curve of~$X$ passing
through~$p_n$. If the integral curve is periodic, this
length may mean that this set winds around the integral curve more than
once. In particular, setting
\[
S_{K}:= \{\phi_X^t p : |t|\leq KT\}\,,
\]
it is clear that both $S_{n,K}$ and $S_{K}^n$ converge to $S_K$ as $n\to\infty$, albeit this convergence does not need to
be uniform in~$K$.

Finally, let us define the compact invariant
set~$S_p$ as the closure of the integral curve of~$X$ passing
through~$p$, which obviously arises as the Hausdorff limit of~$S_K$ as $K\to\infty$. We claim that $S_p \subset Z^{\te}_{\infty}$.

To see this, observe that Equation \eqref{thincyl} ensures that, for all $K$ and any large enough $n$, $S_{n,K} \subset Z^{\te}_n$. Consider an infinite sequence of integers $\cdots < K_{-1} < K_0 < K_1 <\cdots$. It is clear that any $q \in S_p$ is the limit as $|i| \rightarrow \infty$ of some sequence of points $q_i \in S_{K_i}$, and the points $q_i$ are themselves the limits as $n\rightarrow \infty$ of some sequence of points $p^{n}_i \subset S_{n,K_i} \subset Z^{\te}_n$. Upon choosing a diagonal sequence of $p^{n}_i$, we conclude that $q$ is the limit as $(n, |i|) \rightarrow (\infty, \, \infty)$ of a sequence of points in $ Z^{\te}_n$. The uniqueness of the Hausdorff limit (recall that we have fixed a converging subsequence at the beginning) implies that $q \in  Z^{\te}_{\infty}$, as we wanted to prove.

\subsection{Step 2: the collection of limiting sets is independent of $\theta$}\label{S.step2}

Let us recall the definition of the sets $Z_n$:
\[
Z_n:=\left\{p \in M: |\al_n|^2(p) \leq C\text{ max }(r_n^{-\frac{1}{4}},\cE_n r_n^{-\frac{1}{2}}) \right\}\,.
\]
Notice that $Z_n \subset Z^{\theta}_n$ for all $n$ large enough.

We claim that given any $\te \in (0, 1)$, and any converging subsequence $Z^{\te}_{n}$ (in the Hausdorff metric), there is a converging subsequence $Z_{n}$ such that the limits coincide: $Z^{\te}_{\infty}=Z_{\infty}$. Reciprocally, given a convergent subsequence $Z_n$, there is a subsequence $Z^{\te}_{n}$ with the same limit.

Recall that the Hausdorff distance between the sets $Z_{n}$ and $Z^{\theta}_{n}$ is defined as
\[
\dist _{H}(Z_{n}, Z^{\theta}_{n})=\text{max}\bigg(\sup_{x \in Z_n} \dist (x, Z^{\theta}_{n}), \sup_{y \in Z^{\theta}_{n}} \dist (y, Z_{n})\bigg)\,,
\]
for each $n$. Fix a converging subsequence $Z^{\te}_{n}$, and consider the corresponding sequence of sets $Z_n$. We claim that $\dist _{H}(Z_{n}, Z^{\theta}_{n}) \rightarrow 0$ as $n\rightarrow \infty$.

Indeed, by Theorem~\ref{T.pt}, for any sequence of points $p_n \in Z^{\theta}_n$ we can find another sequence $q_n$ such that
\[
|\al_n|^2(q_n)\leq C(r_n^{-1/4}+\rho_n)\,,
\]
\[
\dist (p_n, q_n) < \rho_n \,,
\]
with $\rho_n:=c\cE_n r^{-\frac12}_{n}$ going to zero as $n\rightarrow \infty$. From this we infer that $q_n \in Z_n$ for all $n$ and we conclude that
\[
\sup_{y \in Z^{\theta}_{n}} \dist (y, Z_{n}) < \rho_n \rightarrow 0
\]
as $n\to\infty$. Since $Z_n \subset Z^{\theta}_n$ for all large enough $n$, we can write
\[
\sup_{x \in Z_n} \dist (x, Z^{\theta}_{n})=0\,,
\]
thus implying that $\dist _{H}(Z_{n}, Z^{\theta}_{n}) \rightarrow 0$ as claimed. In particular, $Z_n$ converges to a compact set $Z_\infty$ which is equal to the limiting set $Z^\theta_\infty$. The same argument shows that if a subsequence $Z_n$ converges to a set $Z_\infty$, the corresponding subsequence $Z_n^\theta$ converges to the same Hausdorff limit. Analogously, combining the previous argument with the fact that $Z_n^{\theta'}\subset Z_n^\theta$ if $\theta'\geq \theta$, it follows that any converging subsequence $Z_n^\theta$ yields a converging subsequence $Z_n^{\theta'}$ with the same limit, for any $\theta'\neq\theta$. This completes the proof of Theorem~\ref{T.2}.

%
%
%
%
%
%

\subsection{The bounded energy case: Taubes's result revisited}\label{S.orbits}

All the previous arguments, as well as the maximum principle proved in Section~\ref{S.max}, apply to sequences of solutions with uniformly bounded energy. In fact, in this case, a simple boundedness argument allows us to prove that the invariant set $Z_{\infty}$ must consist of a finite collection of periodic orbits of $X$. Of course, this recovers Taubes's periodic orbit theorem, but without making use of the local analysis that compares Seiberg--Witten with the vortex equations. To see this, in this short subsection we shall assume that $\cE_n \leq C$.

We claim that for any $p \in Z^{\te}_{\infty} $, the invariant set $S_p$ that we constructed in Section~\ref{S.step1} is a periodic orbit of $X$. Indeed, suppose $S_{p}$ is not a periodic orbit. Then one has that, for any $K$ as large as desired, the cylinders $\widetilde{\cC_{n,k}}$ satisfy the
small intersection condition
\[
\mu( \widetilde{\cC_{n,j}}\cap \widetilde{\cC_{n,k}})<C \overline{\rho_n}^3
\]
for all $-K\leq j< k\leq K$ and all large enough~$n$ (depending
on~$K$). Here, we set $\overline{\rho_n}:=D r^{-\frac{1}{2}}$.

Let us now define the slightly cut out cylinders
\[
\cC_{n,k}':= \widetilde{\cC_{n,k}}\backslash\overline{\widetilde{\cC_{n,k-1}}}\,,
\]
which are pairwise disjoint by construction. In view of Equation~\eqref {thincyl} and Lemma~\ref{L.charge1}, we have
\[
\cE_n \si_n(\cC_{n,k}')>\delta
\]
for some constant $\delta>0$ depending on $\theta, T$ and $D$ (which are taken sufficiently small), but not on $n$.

Observe that, by definition, $\cE_n \si_n(M)=\cE_n$. Moreover, the sets $\cC_{n,k}'$ are pairwise disjoint, so Equation~\eqref{sigma_n} and the bound for the negative part of $1-|\alpha|^2$ in Lemma~\ref{L.TaubesEstimates} imply
\[
\cE_n\geq \sum_{k=-K}^K \cE_n \si_n(\cC_{n,k}')-C\geq (2K+1) \delta-C\,,
\]
where all the constants are independent of $n$. Since $K$ can be taken as large as desired and $\cE_n$ is uniformly bounded by hypothesis, this yields a contradiction. So~$S_p$ must be a periodic orbit.

The same argument shows that the number of periodic orbits in $Z^{\te}_{\infty}$ must be finite; otherwise we could construct an unbounded number of disjoint cylinders with $\cE_n\sigma_n$ bounded from below, contradicting the boundedness of the energy.

\subsection{Additional concentration properties: proof of Proposition~\ref{L.con1}}\label{S.open1}

In this final section we prove Proposition~\ref{L.con1}. This concerns the set of points where $|\alpha_n|\to 1$, which is not considered in Theorem~\ref{T.2}. We show that any open component of such a set has zero measure with respect to $\sigma_\infty$. The only hypothesis on the energy sequence is that it is unbounded. We stress that for sequences of bounded energy, an analogous result follows from the analysis in~\cite{T07}, which makes use of the convergence of the Seiberg--Witten equations towards the vortex equations at small scales.

\begin{proof}[Proof of Proposition~\ref{L.con1}]
We first observe that the assumption $|\psi_n|\to 1$ on $U$ is equivalent to $|\alpha_n|\to 1$ on $U$, by the a priori estimates (Lemma~\ref{L.TaubesEstimates}).

Let us define $v_n:=|\al_n|^2-1$. Equation~\eqref{eqal2} in Proposition~\ref{P.albe} can be written as:
\begin{equation}\label{eqv}
(1+v_n)\Delta v_n-  |\nabla v_n|^2=2r(1+v_n)^2 v_n+H(v_n)\,,
\end{equation}
where the term $H(v_n)$ satisfies the pointwise bound
\begin{equation}\label{hbound}
|H(v_n)| \leq C\Big(1+|\nablat v_n|\Big)\,.
\end{equation}
Here we have used that $|\beta|^2\leq \frac{C}{r}$, cf. Lemma~\ref{L.TaubesEstimates}.

Fix a sufficiently small constant $\rho>0$, and consider a geodesic ball $B_{\rho}\subset U$ of radius $\rho$. It is convenient to define a cut-off $C^\infty$ function $\chi: M\rightarrow [0,1]$ that vanishes on the complement of $B_{\rho}$, is positive on $B_\rho$ and is equal to $1$ on $B_{\frac{\rho}{2}}\subset B_{\rho}$. It is easy to see that $\chi$ can be chosen to satisfy the pointwise bounds:
\begin{equation}\label{chi1}
|\nabla \chi|^2 \leq \frac{C}{\rho^2}\,,\qquad  |\Delta \chi| \leq \frac{C}{\rho^2}\,.
\end{equation}

Now, multiply Equation~\eqref{eqv} by $\chi^2 v_n$ to obtain:
\[
\chi^2 v_n(1+v_n)\Delta v_n-  \chi^2 v_n|\nabla v_n|^2=2r\chi^2(1+v_n)^2 v^2_n+\chi^2 v_n H(v_n)\,.
\]
Taking into account that
\[
\Delta(\chi^2 v_n)=v_n \Delta \chi^2+\chi^2 \Delta v_n+4\chi \nabla \chi \cdot \nabla v_n\,,
\]
we can write
\begin{align*}
v_n(1+v_n)\Delta(\chi^2 v_n)-v^2_n (1+v_n) \Delta \chi^2 - 4 v_n(1+v_n)\chi\nabla \chi \cdot \nabla v_n-\chi^2 v_n |\nabla v_n|^2\\=2r\chi^2(1+v_n)^2 v^2_n+\chi^2 v_n H(v_n)\,.
\end{align*}
If we integrate this equation over the ball $B_{\rho}$, and integrate by parts the term $v_n(1+v_n)\Delta(\chi^2 v_n)$, we get the following expression for the local $L^2$ norm of the derivatives of $v_n$:
\begin{align*}\label{inteq}
-\int_{B_{\rho}} \chi^2|\nabla v_n|^2=3 \int_{B_{\rho}} v_n \chi^2|\nabla v_n|^2+8 \int_{B_{\rho}}v^2_n\chi\nabla \chi \cdot \nabla v_n\\+6 \int_{B_{\rho}}v_n\chi\nabla \chi \cdot \nabla v_n+ \int_{B_{\rho}}v^2_n (1+v_n) \Delta \chi^2\\+2r \int_{B_{\rho}} \chi^2 v^2_n(1+v_n)^2+\int_{B_{\rho}} \chi^2 v_n H(v_n)\,.
 \end{align*}
Our objective now is to bound the integrals in the right hand side of this equation. We will use repeatedly the bounds in Equation~\eqref{chi1} and the fact that,  for any $\delta>0$,
\[
\int_{B_{\rho}} \chi|\nabla v_n| \leq \delta \int_{B_{\rho}}\chi^2 |\nabla v_n|^2+\frac{C}{\delta}\,,
\]
with $C$ a constant independent of $\delta$ and $n$. This follows from the elementary inequality
\[
\chi f \leq \delta \chi^2 f^2+\frac{1}{4 \delta}\,.
\]

Noting that the $L^\infty$ norm $\|v_n\|_{\infty}$ of $v_n$ on the ball $B_\rho$ is bounded by $1$ (as a consequence of the a priori bound on $|\alpha_n|^2$), we have
\[
\Big| \int_{B_{\rho}} v_n \chi^2|\nabla v_n|^2 \Big| \leq \|v_n\|_\infty\int_{B_{\rho}} \chi^2|\nabla v_n|^2\,,
\]
\[
\Big|\int_{B_{\rho}}v^2_n\chi \nabla \chi \cdot \nabla v_n \Big| \leq  \frac{C\|v_n\|^2_\infty}{\rho} \int_{B_{\rho}} \chi |\nabla v_n| \leq \frac{C\|v_n\|_\infty}{\rho} \Big( \delta \int_{B_{\rho}}\chi^2 |\nabla v_n|^2+\frac{C}{\delta}\Big)\,,
\]

\[
\Big| \int_{B_{\rho}} v_n\chi\nabla \chi \cdot \nabla v_n \Big| \leq \frac{C\|v_n\|_\infty}{\rho} \int_{B_{\rho}}\chi |\nabla v_n| \leq  \frac{C\|v_n\|_\infty}{\rho} \Big( \delta \int_{B_{\rho}}\chi^2 |\nabla v_n|^2+\frac{C}{\delta}\Big)\,,
\]
\[
\Big| \int_{B_{\rho}}v^2_n (1+v_n) \Delta \chi^2 \Big| \leq \frac{C\|v_n\|_\infty}{\rho^2}\,.
\]
Furthermore, using the bounds for $H$ in Equation~\eqref{hbound}, we deduce
\[
\Big| \int_{B_{\rho}} v_n \chi^2 H \Big| \leq C\|v_n\|_\infty \Big(\rho^3+\int_{B_{\rho}} \chi|\nabla v_n|\Big) \leq C\|v_n\|_\infty\Big(\rho^3+\delta \int_{B_{\rho}}\chi^2 |\nabla v_n|^2+\frac{C}{\delta}\Big)\,.
\]
Finally, applying Equation~\eqref{sigma_n}, we obtain
\[
2r \int_{B_{\rho}} \chi^2 v^2_n(1+v_n)^2 \leq C\|v_n\|_\infty \Big(\cE_n  |\sigma_n(B_{\rho})|+1\Big)\,.
\]
Plugging all these estimates into the integral identity we obtained for $\chi^2|\nabla v_n|^2$, and using that $\delta$ and $\rho$ are small, we conclude:
\[
\Bigg(1-\|v_n\|_{\infty}\Big(3+\frac{C\delta}{\rho}\Big)\Bigg)\int_{B_{\rho}}\chi^2 |\nabla v_n|^2 \leq C\|v_n\|_{\infty} \Big(\frac{1}{\rho^2 \delta}+\cE_n |\sigma_n(B_{\rho})|\Big)
\]
Now we use that, by assumption, $\|v_n\|_{\infty}$ goes to zero as $n\rightarrow \infty$ (because $|\alpha_n|\to 1$ on $U\supset B_{\rho}$). Fixing a constant $\delta$, for large enough $n$ we have
\[
\int_{B_{\frac{\rho}{2}}} |\nabla v_n|^2\leq \int_{B_{\rho}}\chi^2 |\nabla v_n|^2 \leq C\|v_n\|_{\infty} \Big(\frac{1}{\rho^2 \delta}+\cE_n |\sigma_n(B_{\rho})|\Big)\,.
\]
Therefore, as $n \rightarrow \infty$
\begin{equation}\label{gradientcero}
\frac{1}{\cE_n}\int_{B_{\frac{\rho}{2}}} |\nabla |\alpha_n|^2|^2=\frac{1}{\cE_n}\int_{B_{\frac{\rho}{2}}} |\nabla v_n|^2\leq  C\|v_n\|_{\infty} \Big(\frac{1}{\rho^2 \delta\cE_n}+|\sigma_n(B_{\rho})|\Big)\rightarrow 0\,,
\end{equation}
which holds even for solutions with uniformly bounded energy $\cE_n$.

To show that $\sigma_{\infty}(U)=0$, we first observe that, for $n$ large enough we have
\[
\frac{1}{\sqrt{2}}\leq|\al_n|^2\leq 1+Cr^{-1}_{n}
\]
at any point on $B_\rho$, the upper bound coming from Lemma~\ref{L.TaubesEstimates}. Together with Equation~\eqref{sigma_n}, this implies that
\begin{align}\label{eq.otra}
\frac{r_n}{\cE_n}\int_{B_{\frac{\rho}{4}}} |\al_n|^4 (1-|\al_n|^2)  \geq \frac{r_n}{2 \cE_n} \int_{B_{\frac{\rho}{4}}} (1-|\al_n|^2)-\frac{C}{\cE_{n}} \rho^3 \geq \frac{1}{2} \sigma_n(B_{\frac{\rho}{4}})-\frac{C}{\cE_{n}} \rho^3\,.
\end{align}
Now, let $\chi':M\to [0,1]$ be a smooth cut-off function supported on the ball $B_{\frac{\rho}{2}}$, equal to one on $B_{\frac{\rho}{4}}$ and positive on $B_{\frac{\rho}{2}}$. We assume that it satisfies the same bounds as in~\eqref{chi1}. Multiplying Equation~\eqref{eqal2} by $\chi'$ and integrating, we deduce
\[
\frac{2r_n}{\cE_n}\int_{B_{\frac{\rho}{4}}} |\al_n|^4 (1-|\al_n|^2)\leq \frac{1}{\cE_n}\Bigg( \int_{B_{\frac{\rho}{2}}} \chi' |\nabla |\al_n|^2|^2-\int_{B_{\frac{\rho}{2}}} \chi' |\al_n|^2 \Delta |\al_n|^2+ \int_{B_{\frac{\rho}{2}}} \chi' H +C\rho^3\Bigg)\,,
\]
where we have used the a priori bounds for $|\beta_n|^2$ and that $1-|\alpha_n|^2+Cr_n^{-1}\geq 0$. Accordingly, from Equation~\eqref{eq.otra} we get
\begin{equation}\label{sbound}
\sigma_n(B_{\frac{\rho}{4}}) \leq \frac{1}{\cE_n}\Bigg( \int_{B_{\frac{\rho}{2}}} \chi' |\nabla |\al_n|^2|^2-\int_{B_{\frac{\rho}{2}}} \chi' |\al_n|^2 \Delta |\al_n|^2+ \int_{B_{\frac{\rho}{2}}} \chi' H +C\rho^3\Bigg)\,.
\end{equation}
Next, let us estimate the second term on the right hand side of this equation. Integrating by parts we obtain
\[
-\int_{B_{\frac{\rho}{2}}} \chi' |\al_n|^2 \Delta |\al_n|^2=\int_{B_{\frac{\rho}{2}}} \chi' |\nabla |\al_n|^2|+\int_{B_{\frac{\rho}{2}}}  |\al_n|^2 \nabla \chi' \cdot \nabla |\al_n|^2\,.
\]
By the elementary inequality $a \leq a^2+\frac{1}{4}$, the bound for $|\nabla\chi'|$ and the fact that $|\al_n|^2 \leq 1+Cr_n^{-1}$, we can write
\[
\int_{B_{\frac{\rho}{2}}}  |\al_n|^2 \nabla \chi' \cdot \nabla |\al_n|^2 \leq \frac{C}{\rho} \int_{B_{\frac{\rho}{2}}}  |\nabla |\al_n|^2|^2+C\rho^2\,,
\]
with $C$ independent of $\rho$ and $n$. Summing up, we obtain the bound
\[
-\int_{B_{\frac{\rho}{2}}} \chi' |\al_n|^2 \Delta |\al_n|^2 \leq \frac{C}{\rho}\int_{B_{\frac{\rho}{2}}}  |\nabla |\al_n|^2|^2+C\rho^2\,.
\]

Using this estimate in Equation~\eqref{sbound}, it follows that
\[
\sigma_n(B_{\frac{\rho}{4}})\leq \frac{1}{\cE_n}\bigg( \frac{C}{\rho}\int_{B_{\frac{\rho}{2}}} |\nabla |\al_n|^2|^2+\int_{B_{\frac{\rho}{2}}} \chi' H +C\rho^2\bigg)\leq \frac{C}{\cE_n}\bigg( \frac{1}{\rho}\int_{B_{\frac{\rho}{2}}} |\nabla |\al_n|^2|^2+\rho^2\bigg)\,,
\]
where we have used that Equation~\eqref{hbound} implies the bound
\[
\bigg|\int_{B_{\frac{\rho}{2}}}\chi' H\bigg| \leq C\int_{B_{\frac{\rho}{2}}} |\nabla |\al_n|^2|^2+C\rho^3\,.
\]
Then we infer from Equation~\eqref{gradientcero} and the assumption $\cE_n \rightarrow \infty$, that
\begin{equation}\label{eq.zeromass}
\sigma_{n}(B_{\frac{\rho}{4}}) \rightarrow 0
\end{equation}
as $n\to\infty$. Since for any point $p\in U$ we can take a small enough neighborhood $N_p$ whose closure is contained in $U$, Equation~\eqref{eq.zeromass} implies that $\sigma_\infty(N_p)=0$, thus completing the proof of the proposition.
\end{proof}

\section{Absence of local obstructions for the invariant measures}
\label{S.3}

In this section we prove Theorem~\ref{T.3}. To this end, in Section~\ref{SS.vortex} we show that, locally, the modified Seiberg--Witten equations can be reduced to a rescaled version of the vortex equations. This allows us to study the limiting invariant measures using the 2-dimensional vortex equations, cf. Section~\ref{SS.teo}.

As defined before stating Theorem~\ref{T.pt}, we denote by $\cC$ a flow box adapted to the vector field $X$. We recall that a flow box is the image of the cylinder $(0, 1) \times \mathbb{D}$ under an appropriate map
\[
\Phi : (0, 1) \times \mathbb{D}\longrightarrow M\,,
\]
which is a diffeomorphism into its image and which satisfies
\[
d \Phi (\partial_t)=X\,.
\]
Here $t$ is the coordinate in the interval $(0, 1)$. By the volume-preserving flow box theorem, we can choose the local diffeomorphism $\Phi$ so that the volume form $\mu$ on the flow box coordinates is given by
\[
\mu=C dx \wedge dy \wedge dt
\]
for some small enough constant $C$, and coordinates $(x, y) \in \DD$, $t \in (0, 1)$.

The standard Euclidean metric
\[
g_{0}=dx^2+dy^2+dt^2
\]
is then an adapted metric for the vector field $X$ on $\cC$. It is easy to see that we can construct a global metric $g$ on $M$ adapted to the vector field $X$ so that $\Phi_{*} g_0=g|_{\cC}$.

\subsection{From Seiberg--Witten to the rescaled vortex equations}\label{SS.vortex}

In this section we use the notation and constructions introduced in Section~\ref{S.defs}. We always work in the flow box $\cC$ using the aforementioned coordinates and adapted metric. The $1$-form $\lambda=i_{X} g$ is $dt$, the Hermitian line bundle $K=\Ker\lambda$ is spanned by the vector fields $\{\partial_{x}, \partial_{y}\}$ and thus it is trivial, and the base connection $A_0$ defined by Equation~\eqref{fiducial} is $A_0=0$, and then the 1-form $\bom_{K}$ introduced in Equation~\eqref{bom} is also $0$.

We take the associated line bundle $E$ to be the trivial bundle $\CC \times \cC$. We endow the rank-two complex bundle $\SS=E\oplus K^{-1}E$ with the following spin structure, defined via the Clifford multiplication:
\[
\sigma(\partial_t):=\begin{pmatrix}  i & 0 \\ 0 & -i \end{pmatrix}, \qquad
\sigma(\partial_y):=\begin{pmatrix}  0 & -1 \\ 1 & 0 \end{pmatrix},  \qquad
\sigma(\partial_x):=\begin{pmatrix}  0 & i \\ i & 0 \end{pmatrix}\,.
\]
Since all the bundles are trivial, the spinor can be identified with a map $\psi=(\alpha, \beta) : \cC \rightarrow \CC^2$ and the connection with a $1$-form $A=A_t dt+A_x dx+ A_{y} dy$ on $\cC$. The modified Seiberg--Witten equations then read as
\begin{align}\label{SWbox}
\partial_y A_{x}-\partial_x A_{y}=r(1-|\alpha|^2+|\beta|^2)\,,
\\
\partial_x A_{t}-\partial_t A_x=ir(\bar{\alpha}\beta-\bar{\beta} \alpha)\,,
\\
\partial_t A_y-\partial_y A_t=r(\bar{\alpha}\beta+\bar{\beta}\alpha)\,,
\end{align}
and the second equation (the Dirac equation) is
\begin{align}\label{SWbox2}
-(\partial_t \beta -iA_t \beta)+(\partial_x-i\partial_y) \alpha-i(A_x-iA_y)\alpha=0\,,\\
(\partial_t \alpha +iA_t \alpha)+(\partial_x+i\partial_y) \beta+i(A_x+iA_y)\beta=0\,.
\end{align}

These equations can be simplified if we look for $t$-independent solutions that satisfy
\begin{equation}\label{ans}
\beta= A_t= \partial_t A_x=\partial_t A_y=\partial_t \alpha=0\,,
\end{equation}
in which case the modified Seiberg--Witten equations reduce to the well-known rescaled vortex equations on $\DD \subset \CC$ with the complex variable $z:=x+i y$:
\begin{align}\label{eq.vort}
\star d a=r(1-|\phi|^{2})\,,
\\
\overline{\partial}_{a}\phi:=\overline{\partial}_z\phi-i(a_{x}-ia_{y})\phi=0\,.\label{eq.vort2}
\end{align}
Here we have set $\phi:=\alpha$ and $a=a_x dx+a_y dy:=A_x dx+A_y dy$. Equations~\eqref{eq.vort} and~\eqref{eq.vort2} are obtained from the standard vortex equations using the change of variables $z=\sr z'$ (see e.g.~\cite{JT}).

The finite-energy solutions to the vortex equations are well understood. In particular, the following result was proved by Taubes, see~\cite{Taubes80,JT}. It will be instrumental to prove Theorem~\ref{T.3}, so we state it for future reference.

\begin{theorem}[Taubes~\cite{Taubes80, JT}]\label{vortextaubes80}
Let $\cP:=\{z_j\}_{j=1}^k$ be a finite set of distinct points $z_j\in\Cone$, and let $\{m_j\}_{j=1}^k$ be an associated set of positive integers. There is a smooth solution $(a, \phi)$ to the vortex equations~\eqref{eq.vort} and~\eqref{eq.vort2} with $r=1$ such that $\phi^{-1}(0)=\cP$, and such that the zero $z_j$ of $\phi$ has multiplicity $m_j$. Furthermore, the solution satisfies the additional properties:
\begin{enumerate}
\item $|\phi| < 1$ on $\CC$ and $|\phi|\rightarrow 1$ as $|z|\rightarrow \infty$.

\item The energy of the solution is given by
\[
\cE:=\int_{\CC} da=\int_{\CC} (1-|\phi|^2)=2\pi \sum_j m_j\,.
\]

\item There is a universal constant $C$, not depending on the particular configuration of points $\cP$ nor on their multiplicities, such that
\[
|\nabla |\phi|^{2}(z)|\leq |\nabla_a \phi (z)| \leq C\,.
\]

\item Let $\Omega^{-}(\phi)$ denote the set of points in $\Cone$ where $|\phi|^{2}\leq \frac{1}{2}$. There is a universal constant $c\in (0, 1)$, not depending on the particular configuration of points nor their multiplicities, such that, for any $z\in \Cone$ with $\dist (z, \Omega^{-}(\phi))\geq c^{-1} $,
\begin{equation}\label{vortexdecay1}
|1-|\phi(z)|^{2}|\leq e^{-c\dist (z,  \Omega^{-}(\phi))}\,,
\end{equation}
\begin{equation}\label{vortexdecay2}
|\nabla |\phi|^{2}(z)|\leq |\nabla_a \phi(z)| \leq c^{-1}e^{-c\dist (z,  \Omega^{-}(\phi))}\,.
\end{equation}

\end{enumerate}
\end{theorem}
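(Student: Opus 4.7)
The plan is to follow the classical reduction, going back to Taubes's 1980 monograph with Jaffe, of the vortex equations~\eqref{eq.vort}--\eqref{eq.vort2} to a single semilinear elliptic PDE of Kazdan--Warner type. Away from the prescribed zero set~$\cP$, the Cauchy--Riemann-type equation $\overline\partial_{a}\phi = 0$ determines the $(0,1)$-part of~$a$ modulo gauge in terms of~$\phi$, and combining this with the curvature equation $\star da = 1-|\phi|^2$ via a $\partial\bar\partial\log\phi$ computation, one finds that the real function $u:=\log|\phi|^2$ satisfies, in the sense of distributions on $\Cone$,
\begin{equation*}
\Delta u = 2\,(e^u-1)+ 4\pi\sum_{j=1}^k m_j\,\delta_{z_j}\,,
\end{equation*}
with the boundary condition $u\to 0$ at infinity that corresponds to $|\phi|\to 1$. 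Thus the existence problem is transferred to this scalar equation, and a solution~$u$ automatically encodes a vortex configuration whose magnitude $|\phi|=e^{u/2}$ vanishes precisely at the points~$z_j$ with the prescribed multiplicities.

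To solve this scalar PDE, I would introduce the smooth background profile $u_0(z):=\sum_j m_j\,\log\big(|z-z_j|^2/(1+|z-z_j|^2)\big)$, whose distributional Laplacian exactly absorbs the singular sources and which decays at infinity. Writing $u = u_0+w$, the unknown $w$ satisfies a smooth, uniformly elliptic equation $\Delta w = 2 e^{u_0}(e^w-1)+h$ with $h$ a smooth function in $L^2(\Cone)$. This is the Euler--Lagrange equation of a strictly convex, coercive functional on $H^1(\Cone)$, so the direct method delivers a unique smooth minimizer; elliptic regularity and standard Liouville-type arguments yield decay of $w$ at infinity. The reconstruction of $(a,\phi)$ from $u$ is then routine, and the strict inequality $|\phi|<1$ follows from the maximum principle applied to~$u$, since the right-hand side $2(e^u-1)$ is nonnegative whenever $u\geq 0$, which rules out any nonnegative interior maximum (while the logarithmic singularities at the $z_j$ prevent $u\equiv 0$). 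Conclusion~(ii) is then obtained by integrating the scalar PDE over~$\Cone$: since $u\to 0$ quickly enough at infinity one has $\int_\Cone \Delta u = 0$, so $\int_\Cone (1-e^u) = 2\pi\sum_j m_j$.

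The remaining parts~(iii) and~(iv), which require \emph{universal} constants independent of~$\cP$ and~$\{m_j\}$, are in my view the main obstacle, since the variational construction alone only yields bounds that a priori could degenerate as the configuration of zeros gets crowded. For the universal gradient bound I would apply a Bochner--Weitzenb\"ock identity to $|\nabla_{a}\phi|^2$, obtaining a differential inequality of the form $\Delta|\nabla_{a}\phi|^2 \geq -C|\nabla_{a}\phi|^2$ modulo manifestly nonnegative terms, and then invoke a Moser-type maximum principle that bounds $|\nabla_{a}\phi|^2$ by a constant depending only on the ellipticity structure and on the a priori bound $|\phi|\leq 1$, but not on~$\cP$. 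The left inequality $|\nabla|\phi|^2|\leq|\nabla_{a}\phi|$ (up to a harmless constant) is then immediate from the pointwise identity $\nabla|\phi|^2=2\Real(\bar\phi\,\nabla_{a}\phi)$ together with $|\phi|\leq 1$. Finally, the exponential decay in~(iv) follows from a barrier argument: outside a neighborhood of $\Omega^-(\phi)$ the function $v:=1-|\phi|^2$ satisfies a linearized version of the scalar PDE of the form $\Delta v\leq c_0\,v$ for a universal constant $c_0>0$, so comparing $v$ to super-solutions $C\,e^{-c\,\dist(\cdot,\,\Omega^-(\phi))}$ via the maximum principle on bounded exhausting domains (using that $v\to 0$ at infinity) yields~\eqref{vortexdecay1}. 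The corresponding decay~\eqref{vortexdecay2} of $|\nabla_{a}\phi|$ then follows by combining this with interior Schauder estimates on balls of unit radius, which transfer the exponential decay of~$v$ to its covariant derivatives.
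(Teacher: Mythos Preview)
The paper does not supply its own proof of this statement: Theorem~\ref{vortextaubes80} is quoted from~\cite{Taubes80,JT} and used as a black box in Section~\ref{S.3}, so there is nothing in the paper to compare your argument against. Your sketch follows the classical route from those references (reduction to the Kazdan--Warner-type equation for $u=\log|\phi|^2$, variational existence via a convex coercive functional, maximum principle for $|\phi|<1$, integration of the scalar PDE for the energy identity, and Bochner/comparison arguments for the uniform bounds), and is essentially correct.

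One slip worth flagging: in your barrier argument for~(iv) the differential inequality is written with the wrong sign. Comparison with exponentially decaying supersolutions requires $v$ to be a \emph{subsolution} of $\Delta-c_0$, i.e.\ $\Delta v\geq c_0\,v$; the inequality $\Delta v\leq c_0\,v$ that you state is already satisfied by constants and therefore cannot force decay. The clean fix is to work with $w:=-\log|\phi|^2$ instead of $v=1-|\phi|^2$: away from the zeros the scalar equation reads $\Delta w=2(1-e^{-w})$, and on $\{|\phi|^2\geq\tfrac12\}=\{0\leq w\leq\log 2\}$ one has $1-e^{-w}\geq c_0\,w$ for a universal $c_0>0$, whence $\Delta w\geq 2c_0\,w$. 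The comparison argument then yields exponential decay of $w$, and~\eqref{vortexdecay1} follows since $v=1-e^{-w}\leq w$.
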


From this theorem we deduce the following important corollary, which we will use in the next section. It follows from the trivial observation that if $(a(z), \phi(z))$ is a solution to the vortex equations with $r=1$ and zeros at $\{z_j\}^{k}_{j=1}$, then
\begin{equation}\label{rv}
(a_r(z), \phi_r(z)):=(\sr a(\sr z), \phi(\sr z))
\end{equation}
is a solution to the rescaled vortex equations~\eqref{eq.vort}-\eqref{eq.vort2} with zeros at $\{\frac{z_j}{\sr}\}^{k}_{j=1}$. All the items in Corollary~\ref{corrol} then follow from Theorem~\ref{vortextaubes80} by rescaling according to 	Equation~\eqref{rv}.

\begin{corollary}\label{corrol}
Let $\cP:=\{z_j\}_{j=1}^k$ be a finite set of points $z_j\in\DD$, and let $\{m_j\}_{j=1}^k$ be an associated set of positive integers. For each $r>0$, there is a solution $(a_r, \phi_r)$ to the rescaled vortex equations~\eqref{eq.vort} and~\eqref{eq.vort2} on $\Cone$ with $|\phi_r|^{-1}(0)=\cP$ and with each zero $z_j$ having multiplicity $m_j$. Furthermore, the solution $(a_r, \phi_r)$ is bounded as $|\phi_r| < 1$ and has the following properties:
\begin{enumerate}
\item $|\nabla |\phi_r|^{2}(z)|\leq |\nabla_{a_r} \phi_r (z)| \leq C \sr$.
\item $
\cE_r:=r \int_\Cone (1-|\phi_r|^2)=2\pi \sum_j m_j$.
\item If we define
\[
\Omega^{-}_{r}:=\Big\{ z\in\Cone \text{ such that } |\phi_r|^{2}(z)\leq \frac{1}{2}\Big\} \,,
\]
there is a constant $c$ such that, if $\dist (z, \Omega^{-}_{r}) \geq \frac{1}{c\sr}$, we have
\[
|1-|\phi_{r}|^2(z)|\leq e^{-c \sr \dist (z, \Omega^{-}_{r}) }
\]
and
\[
|\nabla |\phi_{r}|^2(z)|\leq c^{-1} \sr e^{-c \sr  \dist (z, \Omega^{-}_{r})}
\]
\end{enumerate}

\end{corollary}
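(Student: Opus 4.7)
The plan is to obtain Corollary~\ref{corrol} by a direct scaling argument: verify that the map $(a,\phi)\mapsto(a_r,\phi_r)$ defined in~\eqref{rv} is a bijection between finite-energy solutions of the vortex equations with $r=1$ and those of the rescaled vortex equations~\eqref{eq.vort}--\eqref{eq.vort2} with parameter $r$, and then transport each of the conclusions of Theorem~\ref{vortextaubes80} through this substitution.

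First I would check that the rescaling really does preserve the PDE. A chain-rule computation gives $\overline{\partial}_z\phi_r(z)=\sr\,(\overline{\partial}_z\phi)(\sr z)$ and $i(a_{r,x}-ia_{r,y})(z)\phi_r(z)=i\sr(a_x-ia_y)(\sr z)\,\phi(\sr z)$, so~\eqref{eq.vort2} for $(a_r,\phi_r)$ follows from the $r=1$ version for $(a,\phi)$. Similarly $\star\,da_r(z)=r\,(\star\,da)(\sr z)=r(1-|\phi(\sr z)|^2)=r(1-|\phi_r(z)|^2)$, which is~\eqref{eq.vort}. The zero set of $\phi_r$ is obviously $\sr^{-1}\cP$, and each zero keeps its multiplicity since composition with the dilation $z\mapsto\sr z$ preserves local orders of vanishing. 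The pointwise bound $|\phi_r|<1$ is immediate from $|\phi|<1$.

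Next I would derive items (i) and (ii). For (i), the chain rule gives $|\nabla_{a_r}\phi_r(z)|=\sr\,|\nabla_a\phi(\sr z)|\leq C\sr$ directly from part (iii) of Theorem~\ref{vortextaubes80}, and $|\nabla|\phi_r|^2(z)|\leq|\nabla_{a_r}\phi_r(z)|$ follows from the identity $\nabla|\phi_r|^2=2\Real(\overline{\phi_r}\,\nabla_{a_r}\phi_r)$ combined with $|\phi_r|\leq 1$, exactly as in the opening lines of the proof of Lemma~\ref{L.bounds2}. For (ii), I change variables by $w=\sr z$, so $d^2w=r\,d^2z$, and compute
\[
r\int_{\CC}(1-|\phi_r(z)|^2)\,d^2z=\int_{\CC}(1-|\phi(w)|^2)\,d^2w=2\pi\sum_j m_j,
\]
using part (ii) of Theorem~\ref{vortextaubes80}.

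Finally, for the decay estimates in (iii), I observe that the sublevel set rescales as $\Omega^-_r=\sr^{-1}\Omega^-(\phi)$, so $\dist(z,\Omega^-_r)=\sr^{-1}\dist(\sr z,\Omega^-(\phi))$; in particular $\dist(z,\Omega^-_r)\geq (c\sr)^{-1}$ is equivalent to $\dist(\sr z,\Omega^-(\phi))\geq c^{-1}$. Plugging this into~\eqref{vortexdecay1}--\eqref{vortexdecay2} and using the chain-rule factor $\sr$ for gradients yields precisely the two claimed bounds. I do not expect any genuine obstacle: the entire argument is bookkeeping in how the relevant quantities transform under $z\mapsto\sr z$, and the only thing one must be careful about is tracking the powers of $\sr$ (for pointwise derivatives) and of $r$ (for integrals in the plane) at each step.
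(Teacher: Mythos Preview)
Your proposal is correct and follows exactly the same rescaling approach as the paper, which simply asserts that all items follow from Theorem~\ref{vortextaubes80} by rescaling via~\eqref{rv}. One minor bookkeeping slip: as you note, the rescaling sends zeros at $\cP$ to zeros at $\sr^{-1}\cP$, so to land on $|\phi_r|^{-1}(0)=\cP$ as the corollary demands you should apply Theorem~\ref{vortextaubes80} to the dilated configuration $\sr\,\cP\subset\CC$ rather than to $\cP$ itself.
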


\subsection{Proof of Theorem \ref{T.3}}\label{SS.teo}

The theorem follows from the following key proposition, whose proof is relegated to Section~\ref{SS.keyprop}:

\begin{proposition}\label{rescaleddirac}
Let $\sigma_{\DD}$ be a probability measure on the disk. There is an increasing sequence of constants $r_n$ that tends to $\infty$, a sequence $\cP_{n}:=\{z_{jn}\}_{j=1}^{k_{n}} \subset \DD$ of finite sets of points, with $\{m_{jn}\}_{j=1}^{k_{n}}$ an associated collection of positive integers,  and a sequence of solutions $(a_{r_n},\phi_{r_n})$ to the Equations~\eqref{eq.vort} and~\eqref{eq.vort2} such that:
\begin{enumerate}
\item $\phi_{r_n}^{-1}(0)=\cP_{n}$, with multiplicities $\{m_{jn}\}$.

\item The sequence of measures
\[
\sigma_{n}:=\frac{r_n(1-|\phi_{r_n}|^{2}) dx \wedge dy}{\int_{\DD} da_{r_n}}
\]
converges weakly to $\sigma_{\DD}$.

\item As $n\rightarrow \infty$, we have
\[
\frac{\int_{\DD} d a_{r_n}}{ 2 \pi N_n} \rightarrow 1\,,
\]
where $N_{n}:=\sum_{j=1}^{k_n} m_{jn}$.

\item If $\sigma_\DD$ is $d$-Frostman for some $d>0$, then $N_n$
is bounded as
$$\lim_{n\to\infty}N_n r_{n}^{-\theta}=0\,,$$
with $\theta:=\min\big\{\frac{1}{4}, \, \frac{d}{2(d+1)}\big\}$.
\end{enumerate}
\end{proposition}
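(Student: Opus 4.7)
The plan is to approximate $\sigma_\DD$ by a discrete integer-weighted probability measure and realize it as the rescaled density $r_n(1-|\phi_{r_n}|^2)\,dx\wedge dy/(2\pi N_n)$ of a vortex solution produced by Corollary~\ref{corrol}. Fix a scale $\epsilon_n\to 0$ and partition a subdisk $\DD_n'\Subset\DD$, at distance $\gg r_n^{-1/2}$ but $o(1)$ from $\partial\DD$, into cells $C_{jn}$ of side $\epsilon_n$; let $z_{jn}$ be the center of $C_{jn}$ and $p_{jn}:=\sigma_\DD(C_{jn})$. For a parameter $N_n$ to be chosen, put $m_{jn}:=\lfloor N_n p_{jn}\rfloor$, retain only the cells with $m_{jn}\geq 1$, and adjust a single multiplicity by at most $O(1)$ to enforce $\sum_j m_{jn}=N_n$. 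Applying Corollary~\ref{corrol} to $\cP_n:=\{z_{jn}\}$ with multiplicities $\{m_{jn}\}$ at parameter $r_n$ then yields $(a_{r_n},\phi_{r_n})$, proving item~(i).

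For items~(ii) and~(iii) the key tool is the exponential decay in Corollary~\ref{corrol}(iii): outside the set $\Omega^-_{r_n}$, contained in a union of balls of radius $\sim\sqrt{m_{jn}/r_n}$ about the points of $\cP_n$, the density $r_n(1-|\phi_{r_n}|^2)$ decays exponentially at rate $\sqrt{r_n}$. Choosing the parameters so that $\sqrt{m_{\max}/r_n}\ll\epsilon_n$ makes the bumps around distinct zeros decouple, each contributing total mass $2\pi m_{jn}$ up to an exponentially small correction (by Corollary~\ref{corrol}(ii)) concentrated in a ball of radius $\sim\sqrt{m_{jn}/r_n}$. Testing $\sigma_n$ against any Lipschitz function $f$ then gives
\[
  \int_{\DD} f\,d\sigma_n = \int_{\DD} f\,d\mu_n^{\mathrm{disc}} + O\!\left(\sqrt{m_{\max}/r_n}\right),
\]
where $\mu_n^{\mathrm{disc}}:=N_n^{-1}\sum_j m_{jn}\delta_{z_{jn}}$. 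By construction $\mu_n^{\mathrm{disc}}\to\sigma_\DD$ weakly as soon as the rounding error is controlled, which gives~(ii); item~(iii) is the analogous statement with $f\equiv 1$, the decay being used once more to absorb the mass that leaks outside $\DD_n'$ and hence outside $\DD$.

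Item~(iv) is obtained by parameter balancing. Under $d$-Frostman, $p_{jn}\leq C\epsilon_n^d$, hence $m_{\max}\leq C N_n\epsilon_n^d$, and the three asymptotic requirements of the previous step become (A)~$\epsilon_n\to 0$, (B)~$N_n\epsilon_n^d\to\infty$ (enough multiplicity for rounding control), and (C)~$N_n\epsilon_n^d/r_n\to 0$ (bumps fit comfortably within cells). Setting $\epsilon_n:=r_n^{-\gamma}$ and choosing $N_n$ just above $\epsilon_n^{-d}=r_n^{d\gamma}$ to saturate~(B), condition~(C) reduces to $\gamma<1/2$, and taking $\gamma$ just below $1/(2(d+1))$ yields the Frostman bound $N_n=o(r_n^{d/(2(d+1))})$. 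A parallel, Frostman-free construction in which the relevant lower bound is $N_n\gtrsim 1/\epsilon_n^2$ (the worst case for a generic two-dimensional probability measure) yields the baseline $N_n=o(r_n^{1/4})$ with $\gamma$ just below $1/8$. Taking the better of these exponents, and choosing $r_n\to\infty$ via a diagonal argument compatible with all the above asymptotics, produces the stated $\theta=\min\{1/4,\,d/(2(d+1))\}$. The main technical obstacle is the quantitative control of the exponential tails of individual bumps and of their mutual interactions when zeros cluster; the separation $\epsilon_n\gg\sqrt{m_{jn}/r_n}$ between cell size and bump radius enforced by~(C) is precisely what allows Corollary~\ref{corrol}(iii) to render these tails negligible and upgrade the qualitative weak convergence of $\mu_n^{\mathrm{disc}}$ into the required convergence of $\sigma_n$.
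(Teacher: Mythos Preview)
Your overall architecture---discretize $\sigma_\DD$, apply Corollary~\ref{corrol}, and use exponential decay to transfer weak convergence---matches the paper's. However, your execution of item~(ii) diverges from the paper's, and the version you wrote has a genuine gap.

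The paper does \emph{not} argue that each bump carries mass $2\pi m_{jn}$. Instead it sets $u_{r_n}:=\log|\phi_{r_n}|^2$, observes the distributional identity
\[
\Delta u_{r_n} + 2r_n(1-e^{u_{r_n}}) = 4\pi\sum_j m_{jn}\,\delta(z-z_{jn}),
\]
and integrates against $f$: this gives $r_n\int_\DD(1-e^{u_{r_n}})f = 2\pi\sum_j m_{jn}f(z_{jn}) - \tfrac12\int_\DD f\,\Delta u_{r_n}$, so item~(ii) reduces to showing $N_n^{-1}\int_\DD u_{r_n}\to 0$. This is handled via three auxiliary lemmas (Lemmas~\ref{balls}--\ref{insid}) that control $\Omega_n^-$, the exponential decay on $\Omega_n^+$, and the local behavior of $\log h_{jn}$ near each zero. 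In particular, the $1/4$ in $\theta$ arises from a term $N_n^4/r_n$ in the estimate of Lemma~\ref{insid}; it is not a discretization artifact.

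Your bump-decoupling argument instead asserts that the mass in the cell around $z_{jn}$ equals $2\pi m_{jn}$ ``by Corollary~\ref{corrol}(ii)''. But~(ii) gives only the \emph{global} mass $2\pi N_n$ on $\CC$, not its allocation among cells. To localize you would need an extra step: write $\int_{C_{jn}} r_n(1-|\phi_{r_n}|^2)=\oint_{\partial C_{jn}} a_{r_n}$, use $\bar\partial_a\phi=0$ and the decay of $|\nabla_a\phi|$ in Corollary~\ref{corrol}(iii) to show $a_{r_n}\approx d(\arg\phi_{r_n})$ on $\partial C_{jn}$, and invoke the winding number. This is fillable, but it is not what you cited. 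Relatedly, your claimed bump radius $\sim\sqrt{m_{jn}/r_n}$ is nowhere proved; the paper only establishes $\Omega_n^-(z_{jn})\subset B(z_{jn},CN_n/\sqrt{r_n})$ (Lemma~\ref{balls}), and your parameter analysis for~(iv) wavers between these two scales---you state~(C) as $N_n\epsilon_n^d/r_n\to 0$ and derive $\gamma<1/2$, yet then take $\gamma$ below $1/(2(d+1))$, which is the constraint coming from the $N_n/\sqrt{r_n}$ radius. The origin of the baseline $1/4$ (your ``$\gamma$ just below $1/8$'') is likewise not explained by your conditions.
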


Let us first show how item~(i) in Theorem~\ref{T.3} follows from Proposition~\ref{rescaleddirac}. Given the sequence of solutions $(a_{r_n},\phi_{r_n})$, the discussion in Section~\ref{SS.vortex} shows that $\psi_r:=(\phi_{r_n},0)$ and $A_r:=a_{r_n}$ is a sequence of solutions of the modified Seiberg--Witten equations on $\cC$. Obviously $F_{A_r}=da_{r_n}$, $\la\wedge F_{A_r}=r_n(1-|\phi_{r_n}|^2)dx\wedge dy\wedge dt$ and the energy of the solutions is $\cE_r=\int_\DD da_{r_n}$. The sequence of measures of the Seiberg--Witten equations is then
\[
\sigma_n\otimes dt\,,
\]
and therefore item~(ii) above implies that it converges weakly to $\sigma_\DD\otimes dt$, as claimed. The energy $\cE_r$ is bounded as $N_{n}$, when $r\to\infty$, by item~(iii). Assuming that the measure $\sigma_\DD$ is $d$-Frostman, item~(iv) provides an estimate for $N_{n}$, which immediately implies item~(ii) in Theorem~\ref{T.3}, which completes the proof.

%

In the following proposition, we show the connection between the regularity of a measure and its Frostman properties alluded to in the Introduction. Recall that the Sobolev space $W^{-1,p}_{\DD}$ is defined as
\[
W^{-1,p}_{\DD}:=\left\{\phi\in W^{-1,p}(\RR^2): \supp(\phi)\subset \overline{\DD}\right\}\,.
\]
It easily follows from a duality argument and the Sobolev embedding theorem that any measure $\siD$ is in $W^{-1, p}_{\DD}$ for all $p<2$. This result is sharp, as evidenced by the Dirac measure $\delta_{p_0}$ supported at a point $p_0\in\DD$. When the measure is slightly more regular, we infer that it is $d$-Frostman for some $d>0$:

\begin{proposition}\label{P.frost}
Assume that the probability measure $\siD$ is in the Sobolev space $W^{-1,p}_{\DD}$ for some $p\in(2,\infty]$. Then $\sigma_\DD$ is $d$-Frostman with $d:=1-\frac2p$.
\end{proposition}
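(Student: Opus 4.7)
The plan is to prove the Frostman bound by duality: test the measure against a smooth bump and use the fact that $\sigma_\DD \in W^{-1,p}_\DD$ pairs with $W^{1,p'}$ functions, where $p'=p/(p-1)$ is the conjugate exponent.

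More precisely, fix $x\in\RR^2$ and $\ep\in(0,1)$. I would pick a smooth cut-off function $\chi_\ep\in C_c^\infty(\RR^2)$ satisfying $0\le \chi_\ep\le 1$, $\chi_\ep\equiv 1$ on $B(x,\ep)$, $\supp\chi_\ep\subset B(x,2\ep)$, and $|\nabla\chi_\ep|\le C\ep^{-1}$ (such a bump is obtained by rescaling a fixed smooth cut-off). Since $\sigma_\DD$ is a positive measure,
\[
\sigma_\DD(B(x,\ep))\le \int_{\RR^2}\chi_\ep\,d\sigma_\DD=\langle \sigma_\DD,\chi_\ep\rangle_{W^{-1,p},W^{1,p'}}\le \|\sigma_\DD\|_{W^{-1,p}}\|\chi_\ep\|_{W^{1,p'}}.
\]
Then I would estimate the Sobolev norm of the bump. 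Direct computation yields
\[
\|\chi_\ep\|_{L^{p'}}\le C\ep^{2/p'}\qquad\text{and}\qquad \|\nabla\chi_\ep\|_{L^{p'}}\le C\ep^{2/p'-1},
\]
so for $\ep<1$ the gradient term dominates and $\|\chi_\ep\|_{W^{1,p'}}\le C\ep^{2/p'-1}$. A short arithmetic check gives $2/p'-1=2(1-1/p)-1=1-2/p=d$, hence
\[
\sigma_\DD(B(x,\ep))\le C\|\sigma_\DD\|_{W^{-1,p}}\ep^{d},
\]
which is exactly the $d$-Frostman property.

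There is no serious obstacle; the only points to be careful about are that the pairing is valid because $\chi_\ep$ is smooth and compactly supported (so clearly in $W^{1,p'}$), and that the limiting case $p=\infty$ corresponds to $p'=1$ and gives $d=1$, which is consistent. The restriction $p>2$ is exactly what is needed to make $d>0$, reflecting the known fact that, by Sobolev duality, every finite measure on $\DD$ lies in $W^{-1,p}_\DD$ for every $p<2$, but extra regularity above this threshold encodes additional dimensional information about its support.
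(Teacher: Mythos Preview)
Your proof is correct and follows essentially the same approach as the paper: test the measure against a rescaled smooth bump, use the $W^{-1,p}$--$W^{1,p'}$ duality, and compute the $W^{1,p'}$ norm of the bump by scaling to obtain the exponent $1-2/p$. The only cosmetic difference is that you restrict to $\ep<1$, which is harmless since for $\ep\ge 1$ the Frostman bound is trivial for a probability measure.
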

\begin{proof}
Take a smooth bump function $\chi:\RR^2\to[0,1]$ such that $\chi(x)=1$ if $|x|\leq 1$ and $\chi(x)=0$ if $|x|\geq2$. Defining $\chi_\ep(x):=\chi\big(\frac{x-x_0}{\ep}\big)$ for any point $x_0\in\RR^2$ and any $\ep>0$, we obviously have
\begin{equation}\label{CauchyS}
\siD(B(x_0,\ep))\leq \int_{\RR^2} \chi_\ep(x)\, d\siD(x)\,.
\end{equation}
Let $p':=p/(p-1)\in[0,2)$ be the dual exponent to~$p$. By scaling, the $W^{1,p'}$ norm of~$\chi_\ep$ satisfies
\begin{align*}
\|\chi_\ep\|_{W^{1,p'}}= \|\chi_\ep\|_{L^{p'}}+ \|\nabla\chi_\ep\|_{L^{p'}} \leq C\ep^{2-\frac2p}+ C\ep^{1-\frac2p}\leq C\ep^{1-\frac2p}\,.
\end{align*}
The generalized H\"older inequality then allows us to estimate~\eqref{CauchyS} as
\[
\siD(B(x_0,\ep))\leq \|\siD\|_{W^{-1,p}} \|\chi_\ep\|_{W^{1,p'}}\leq C\|\siD\|_{W^{-1,p}}\ep^{1-\frac2p}\,.
\]
The lemma then follows.
\end{proof}

\subsection{Proof of Proposition \ref{rescaleddirac}}\label{SS.keyprop}

The proof is divided in five steps. Items (i) and (iii) are established in Steps~2 and~4, respectively, while items (ii) and (iv) are proved in Step~5. The proof of some intermediate lemmas is postponed to Section~\ref{SS.lem}.

\subsubsection*{Step 1: Choice of a sequence of points}

We claim that we can choose a sequence of finite sets of points $\cP_n=\{z_{jn}\}_{j=1}^{k_n} \subset \DD$ with multiplicities $\{m_{jn}\}_{j=1}^{k_n}$, $n \in \NN$, and $N_{n}=\sum_j m_{jn}$, such that the Dirac measures
\[
\delta_{\cP_n}:=\frac{1}{N_n} \sum_{j=1}^{k_n} m_{jn}\delta(z-z_{jn})
\]
converge weakly to $\sigma_{\DD}$ as $n \rightarrow \infty$, and, moreover, if we define
\begin{equation}\label{eq.epn}
\epsilon_{n}:=\frac{1}{2} \min \bigg(\min_{j, k} |z_{jn}-z_{kn}|, \, \min_{j} \dist(z_{jn}, \partial \DD)\bigg)\,.
\end{equation}
we have:
\begin{enumerate}

\item There is a decreasing continuous function $F: (0, \infty) \rightarrow (0, \infty)$ with
\[
\lim_{x\rightarrow \infty} F(x)=0
\]
and so that
\[
\epsilon_n \geq F(N_{n})\,.
\]

\item If $\sigma_{\DD}$ is $d$-Frostman with $d>0$, this function can be taken $F(x):=Cx^{-\frac{1}{d}}$ for some constant $C>0$, so that
\[
\epsilon_{n} \geq \frac{C}{ N_{n}^{\frac{1}{d}}} \,.
\]
\end{enumerate}

Indeed, in the case that $\sigma_\DD$ is a point measure there is $n_0\geq 1$ such that $\cP_n=\cP_{n_0}$ for all $n\geq n_0$, and the same with $N_n$ and $\epsilon_n$; the claim is then obvious because the function $F$ can be chosen so that $F(N_n)\leq \epsilon_{n_0}$ for all $n\geq n_0$. Otherwise, it is standard that we can always approximate the measure $\sigma_{\DD}$, in the sense of weak convergence, by a sequence of Dirac probability measures of the form
\[
\Sigma_n:=\frac{1}{M_n}\sum_{j=1}^{k_n} \sigma_{\DD}(B_{\epsilon'_n}(z_{jn})) \delta(z-z_{jn})
\]
for some sequence of points $\{z_{jn}\}_{j=1}^{k_n}\subset\DD$. Here, $\{B_{\epsilon'_n}(z_{jn})\}$ is a disjoint collection of balls of radius $\epsilon'_n$ inside the disk, which cover it when $n\to\infty$ (and hence $\epsilon'_n\to 0$), and $M_n:=\sum_{j=1}^{k_n} \sigma_{\DD}(B_{\epsilon'_n}(z_{jn}))$. By density, we can safely assume that each $\sigma_{\DD}(B_{\epsilon'_n}(z_{jn}))M_n^{-1}$ is a rational number of the form
\[
\frac{\sigma_{\DD}(B_{\epsilon'_n}(z_{jn}))}{M_n}=\frac{m_{jn}}{N_n}
\]
for some positive integers $m_{jn}$ and $N_n$. Obviously, $\sum_j m_{jn}=N_n$, $k_n\leq N_n$ and $M_n\to 1$ as $n\to\infty$. The measure $\Sigma_n$ is then of the form $\delta_{\cP_n}$ stated above. Moreover, the numbers $\epsilon_n$ defined in Equation~\eqref{eq.epn} are bounded from below as $\epsilon_n\geq\epsilon'_n$.

If $\sigma_{\DD}$ is $d$-Frostman, then $\sigma_{\DD}(B_{\epsilon}(x)) \leq C \epsilon^{d}$, so taking $n$ large enough so that $M_n\geq \frac12$, we deduce the relation
\[
N_n\epsilon_n^d\geq N_n (\epsilon'_n)^d\geq \frac{C}{2} \,,
\]
which proves the item~(ii) above. If the measure $\sigma_\DD$ is not $d$-Frostman, there is no explicit relation between $\epsilon_n$ and $N_n$. However, using that the sequence $\epsilon_n$ can be chosen to be decreasing, we can always define $F(N_n):= \epsilon_n$, and find a decreasing positive function $F$ interpolating those values such that $\lim_{x\to\infty}F(x)=0$, so that item~(i) is trivially true.

In what follows we fix a sequence $\cP_n$ and associated multiplicities $\{m_{jn}\}_{j=1}^{k_n}$ with the properties stated above. In particular, if the measure is $d$-Frostman, we assume that $F$ is given as in item (ii).


\subsubsection*{Step 2. Choice of a sequence of rescaled vortex solutions} By Corollary~\ref{corrol}, for any sequence of positive real numbers $\{r_{n}\}_{n=0}^{\infty}$ we can find a sequence of solutions $(a_{r_n}, \phi_{r_n})$ to the $r_{n}$-rescaled vortex equations on $\CC$, with $\phi_{r_n}^{-1}(0)=\cP_{n}$, and with associated multiplicities $\{m_{jn}\}$. This already proves item (i) of the proposition.

For the rest of the proof, it is convenient to fix a sequence $\{r_{n}\}$ so that the following conditions are satisfied as $n\rightarrow \infty$:
\begin{equation}\label{rn}
0=\lim_{n\to\infty}\frac{N_n}{r^{\frac{1}{4}}_{n}}  =\lim_{n\to\infty} \frac{N_n}{F(N_n) \sqrt{r_n}}=\lim_{n\to\infty} \frac{\log r_{n}}{F(N_n) \sqrt{r_n}}\,,
\end{equation}
where $F$ is the function defined in Step~$1$.  Observe that such a sequence always exists because it suffices to take $r_n$ large enough for each $n$.

It is easy to see that in the case that $\sigma_{\DD}$ is $d$-Frostman for some $d>0$, and so $F(N_n)=C N_n^{-\frac{1}{d}}$, Equation~\eqref{rn} is satisfied if we choose a sequence $r_n$ verifying
\begin{equation}\label{eq.Frost}
\lim_{n\to\infty}N_nr^{-\theta}_{n}= 0
\end{equation}
with
\[
\theta:=\min\bigg\{\frac{1}{4},\, \frac{d}{2(d+1)}\bigg\}\,.
\]

\subsubsection*{Step 3: Some key auxiliary Lemmas}

It is convenient to define
\begin{align*}
	\Omega_{n}^{+}&:=\DD \setminus \bigcup_{j} B(z_{jn}, F(N_n))\,,\\
\Omega_{n}^{-}&:=\left\{z \in \DD : |\phi_{r_n}(z)|^{2} < \frac{1}{2}\right\}
\end{align*}
and to let $\Omega_{n}^{-}(z_{jn})$ denote the connected component of $\Omega_{n}^{-}$ which contains $z_{jn}$. Note that
\[
 \Om_n^- = \bigcup_j \Om_n^-(z_{jn})
\]
because there is a zero of~$\phi$ in each connected component of~$\Om_n^-$ (simply because, by the form of the vortex equations, $\phi$ must vanish at each minimum of~$|\phi|^2$).

Our goal is to show that, as $n\to\infty$, the function $|\phi_{r_n}|$ is exponentially close to $1$ in the set $\Omega_{n}^{+}$, while in the set $\Omega_{n}^{-}$ the solution goes to zero with a polynomial bound.
Before stating the lemmas that establish these properties, we notice that Equation~\eqref{rn} implies, for any fixed constant $C$ independent of $n$, that
\begin{equation}\label{littleball}
B\left(z_{jn}, \frac{C N_n}{\sqrt{r_n}}\right) \subset B(z_{jn}, F(N_n))
\end{equation}
provided that $n$ is large enough. The proofs of these lemmas will be presented in Section~\ref{SS.lem}.

\begin{lemma}\label{balls}
$\Omega^{-}_{n}(z_{jn}) \subset B(z_{jn}, \frac{CN_n}{\sqrt{r_n}})$ for some constant $C$ independent of $n$, for all $j$ and all large enough $n$.
\end{lemma}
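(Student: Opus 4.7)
The plan is to combine the two main ingredients from Corollary~\ref{corrol}: the total energy identity $r_n\int_\CC(1-|\phi_{r_n}|^2) = 2\pi N_n$, which controls the Lebesgue area of sub-level sets of $|\phi_{r_n}|^2$, and the gradient bound $|\nabla|\phi_{r_n}|^2|\leq C_0\sqrt{r_n}$, which forces those sub-level sets to be geometrically ``thick''. A Fubini argument will then convert the area estimate on the auxiliary set $V_n := \{z\in\CC : |\phi_{r_n}(z)|^2\leq 3/4\}$ into a genuine diameter bound on each component $\Omega_n^-(z_{jn})$.

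\textbf{Key steps.} First, since $1-|\phi_{r_n}|^2\geq 1/4$ on $V_n$, the energy identity immediately gives $|V_n|\leq 8\pi N_n/r_n$. Second, setting $\delta_n := 1/(4C_0\sqrt{r_n})$, the gradient bound ensures that $B(q,\delta_n)\subset V_n$ for every $q\in\Omega_n^-(z_{jn})$, because along any radial segment from $q$ the quantity $|\phi_{r_n}|^2$ can increase by at most $C_0\sqrt{r_n}\cdot\delta_n = 1/4$, so it remains $\leq 3/4$. Third, fix an arbitrary $w\in\Omega_n^-(z_{jn})$, set $D := |w-z_{jn}|$, and let $\pi$ denote orthogonal projection onto the line through $z_{jn}$ and $w$. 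Since $\Omega_n^-(z_{jn})$ is open and connected, $\pi(\Omega_n^-(z_{jn}))$ is a connected subset of this line containing both $z_{jn}$ and $w$, hence it contains the entire segment $[z_{jn},w]$. For each $p$ in this segment, pick $q(p)\in\Omega_n^-(z_{jn})$ with $\pi(q(p)) = p$; the line through $p$ perpendicular to $[z_{jn},w]$ then passes through $q(p)$ and meets $B(q(p),\delta_n)\subset V_n$ in a segment of length $2\delta_n$. Fubini's theorem yields
\[
|V_n|\;\geq\;\int_0^D 2\delta_n\,dp \;=\; 2\delta_n D,
\]
and combined with the area bound $|V_n|\leq 8\pi N_n/r_n$ this gives $D\leq 16\pi C_0 N_n/\sqrt{r_n}$, which is the claim.

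\textbf{Main obstacle.} None of the steps is technically difficult: the whole argument is a direct coupling of the two estimates in Corollary~\ref{corrol}. The conceptual point, and the only place where some care is needed, is to recognize that the area bound $|V_n|\leq 8\pi N_n/r_n$ does \emph{not} by itself yield a diameter bound (the component could in principle be a long thin tentacle of tiny area), and that the gradient-bound-plus-projection trick is precisely what rules this out by showing that each transverse slice of $V_n$ above $[z_{jn},w]$ has length at least $2\delta_n$.
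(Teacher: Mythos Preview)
Your proof is correct and follows essentially the same strategy as the paper's: use the gradient bound $|\nabla|\phi_{r_n}|^2|\leq C\sqrt{r_n}$ to show that the sublevel set has a definite thickness of order $r_n^{-1/2}$, then invoke the total-energy identity $r_n\int_\CC(1-|\phi_{r_n}|^2)=2\pi N_n$ to bound its area and hence its extent. The paper connects $z_{jn}$ to an arbitrary point of $\Omega_n^-(z_{jn})$ by a curve inside the component and thickens that curve into a tube, whereas you project onto the chord $[z_{jn},w]$ and apply Fubini to the perpendicular slices---a minor and arguably cleaner variation of the same idea (it sidesteps any concern about the tube overlapping itself).
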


\begin{lemma}\label{outsid}
For any $z \in \Omega_{n}^{+}$ and all $n>0$, the solution to the vortex equations is bounded as
\[
1-|\phi_{r_n}(z)|^2 \leq e^{-c F(N_n) \sqrt{r_n}}\,,
\]
\[
|\nabla |\phi_{r_n}|^2| \leq c^{-1} \sqrt{r_n} e^{-c F(N_n) \sqrt{r_n}}\,.
\]
Moreover, for any $z \in \CC \setminus \DD$ the estimate is
\[
1-|\phi_{r_n}(z)|^2 \leq e^{-c\sqrt{r_n}(||z|-1|+F(N_n))}\,,
\]
\[
|\nabla |\phi_{r_n}|^2 | \leq c^{-1} \sqrt{r_n} e^{-c\sqrt{r_n} (||z|-1|+F(N_n))}\,.
\]
Here $c$ is a constant that does not depend on $n$.
\end{lemma}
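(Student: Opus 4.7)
The plan is to apply Corollary~\ref{corrol}(iii) after using Lemma~\ref{balls} to bound $\dist(z,\Omega_n^-)$ from below. Recall that Lemma~\ref{balls} gives $\Omega_n^-(z_{jn})\subset B(z_{jn},CN_n/\sqrt{r_n})$, and every connected component of $\Omega_n^-$ contains a zero of $\phi_{r_n}$ (since $|\phi_{r_n}|^2$ must vanish at its local minima by the form of the rescaled vortex equation), so
\[
\Omega_n^- \;\subset\; \bigcup_{j=1}^{k_n} B\!\left(z_{jn},\tfrac{CN_n}{\sqrt{r_n}}\right).
\]
Moreover, by Equation~\eqref{littleball} the balls $B(z_{jn},CN_n/\sqrt{r_n})$ are contained in $B(z_{jn},F(N_n))\subset\DD$ once $n$ is large (using $\epsilon_n\geq F(N_n)$ from Step~1).

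For the first pair of estimates, if $z\in\Omega_n^+$ then by definition $|z-z_{jn}|\geq F(N_n)$ for every $j$, so
\[
\dist(z,\Omega_n^-)\;\geq\; F(N_n)-\tfrac{CN_n}{\sqrt{r_n}}\;\geq\;\tfrac12 F(N_n),
\]
where the last inequality uses $N_n/(F(N_n)\sqrt{r_n})\to 0$ from Equation~\eqref{rn}. Since $F(N_n)\sqrt{r_n}\to\infty$, also by \eqref{rn}, the hypothesis $\dist(z,\Omega_n^-)\geq 1/(c\sqrt{r_n})$ of Corollary~\ref{corrol}(iii) is satisfied for $n$ large enough, and plugging in the lower bound yields the claimed estimates (after absorbing the factor $\tfrac12$ into the constant $c$).

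For the second pair of estimates, take $z\in\CC\setminus\DD$. The bound $\dist(z_{jn},\partial\DD)\geq\epsilon_n\geq F(N_n)$ forces $|z_{jn}|\leq 1-F(N_n)$, whence
\[
|z-z_{jn}|\;\geq\; |z|-|z_{jn}|\;\geq\; (|z|-1)+F(N_n)\;=\;||z|-1|+F(N_n).
\]
Subtracting the radius $CN_n/\sqrt{r_n}$ of the balls covering $\Omega_n^-$ and again invoking \eqref{rn} gives
\[
\dist(z,\Omega_n^-)\;\geq\;\tfrac12\bigl(||z|-1|+F(N_n)\bigr),
\]
and this lower bound still dominates $1/(c\sqrt{r_n})$ for large $n$. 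Corollary~\ref{corrol}(iii) then immediately delivers the exponential decay outside the disk.

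There is no real obstacle in this proof: it is essentially a geometric distance calculation combined with the decay estimates provided by Corollary~\ref{corrol}(iii), with the only mild subtlety being the bookkeeping needed to verify that the conditions $CN_n/\sqrt{r_n}\leq\tfrac12 F(N_n)$ and $\dist(z,\Omega_n^-)\geq 1/(c\sqrt{r_n})$ both follow from the defining property~\eqref{rn} of the sequence $\{r_n\}$ chosen in Step~2.
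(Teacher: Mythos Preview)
Your proof is correct and follows essentially the same approach as the paper: both use Lemma~\ref{balls} to confine $\Omega_n^-$ inside the balls $B(z_{jn},CN_n/\sqrt{r_n})$, then estimate $\dist(z,\Omega_n^-)$ from below using the definitions of $\Omega_n^+$ and $\epsilon_n$ together with Equation~\eqref{rn}, and finally apply Corollary~\ref{corrol}(iii). The only cosmetic differences are that the paper writes the lower bound as $(1-\delta)F(N_n)$ rather than $\tfrac12 F(N_n)$, and for $z\in\CC\setminus\DD$ it invokes the triangle inequality in the form $\dist(z,\Omega_n^-)\geq\dist(z,\partial\DD)+\dist(\partial\DD,\Omega_n^-)$ rather than your direct estimate via $|z_{jn}|\leq 1-F(N_n)$.
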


Observe that the third condition in Equation~\eqref{rn} implies that $F(N_n) \sqrt{r_n} \rightarrow \infty$ as $n\rightarrow \infty$ faster than $\log r_n$ (even in the case where $N_n$ is constant for all $n\geq n_0$), so all the upper bounds in Lemma~\ref{outsid} go to $0$ as $n \rightarrow \infty$.

\begin{lemma}\label{insid}
For some constant $C>0$ (independent of $n$), on each ball $B(z_{jn}, \frac{C N_n}{\sqrt{r_n}})$ we can write
\[
|\phi_{r_n}(z)|^2=r^{m_{jn}}_{n} h_{jn}(z) |z-z_{jn}|^{2m_{jn}}\,,
\]
where $h_{jn}(z)$ is a smooth function satisfying $h_{jn}(z)>0$ and
\[
\bigg|\frac{1}{N_n} \sum_{j} \int_{B(z_{jn}, \frac{CN_n}{\sqrt{r_n}})}\log (h_{jn}) \bigg| \rightarrow 0
\]
as $n \rightarrow \infty$.
\end{lemma}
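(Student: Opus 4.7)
The factorization uses that $\phi_{r_n}$ is $\bar\partial_a$-holomorphic. In a local complex gauge near $z_{jn}$, this allows $\phi_{r_n}$ to be written as a holomorphic-type function vanishing to order exactly $m_{jn}$ at $z_{jn}$; since by Equation~\eqref{rn} the radius $R:=CN_n/\sqrt{r_n}$ is, for $n$ large, smaller than $\epsilon_n$, the ball $B_{jn}:=B(z_{jn},R)$ contains no other zeros. The prefactor $r_n^{m_{jn}}$ arises from the scaling~\eqref{rv}, which multiplies the absolute value of a zero of order $m_{jn}$ by $r_n^{m_{jn}/2}$.

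The integral estimate rests on the elliptic identity
\[
\Delta \log h_{jn} = 2r_n(|\phi_{r_n}|^2-1)\quad\text{on } B_{jn}\,,
\]
which I would derive by combining the standard vortex formula $\Delta\log|\phi_{r_n}|^2 = 2r_n(|\phi_{r_n}|^2-1)+4\pi\sum_k m_{kn}\delta_{z_{kn}}$ with $\Delta\log|z-z_{jn}|=2\pi\delta_{z_{jn}}$, so that the distributional contributions cancel (only the delta at $z_{jn}$ sits inside $B_{jn}$). Consequently $\log h_{jn}$ is smooth and superharmonic on $B_{jn}$.

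I would then split $\log h_{jn} = v_{1,n} + v_{2,n}$ with $v_{1,n}$ harmonic in $B_{jn}$ and matching $\log h_{jn}$ on $\partial B_{jn}$, and $v_{2,n}$ vanishing on $\partial B_{jn}$. By Lemma~\ref{balls}, $\Omega^-_n(z_{jn})\subset B_{jn}$, and the other components of $\Omega_n^-$ sit in the disjoint balls $B_{kn}$, so $|\phi_{r_n}|^2\geq\tfrac12$ on $\partial B_{jn}$. The $\log r_n$ terms cancel in
\[
\log h_{jn}\big|_{\partial B_{jn}} = \log|\phi_{r_n}|^2\big|_{\partial B_{jn}} - m_{jn}\log r_n - 2m_{jn}\log R\,,
\]
leaving $\sup_{\partial B_{jn}}|\log h_{jn}|=O(m_{jn}\log N_n)$. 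The maximum principle gives $\big|\int_{B_{jn}} v_{1,n}\big|\leq\pi R^2\cdot O(m_{jn}\log N_n)$; summing over $j$ and dividing by $N_n$ yields $O(N_n^2\log N_n/r_n)\to 0$ by Equation~\eqref{rn}. For $v_{2,n}$, the Green's-function representation $v_{2,n}(z)=-\int_{B_{jn}}G_R(z,w)\Delta\log h_{jn}(w)\,dw$ together with the elementary identity $\int_{B_{jn}}G_R(z,w)\,dz=(R^2-|w-z_{jn}|^2)/4\leq R^2/4$ gives
\[
0\leq \int_{B_{jn}} v_{2,n}\leq \frac{r_n R^2}{2}\int_{B_{jn}}(1-|\phi_{r_n}|^2)\,.
\]
Summing and using Corollary~\ref{corrol}(ii), which bounds $\sum_j \int_{B_{jn}}(1-|\phi_{r_n}|^2)\leq 2\pi N_n/r_n$, yields $N_n^{-1}\sum_j \int_{B_{jn}}v_{2,n}=O(N_n^2/r_n)\to 0$. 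Combining the two pieces gives $N_n^{-1}\big|\sum_j \int_{B_{jn}}\log h_{jn}\big|\to 0$, as claimed.

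The most delicate step is the derivation of the PDE for $\log h_{jn}$: one must verify the sign convention of the vortex formula carefully and confirm that the distributional contributions cancel inside $B_{jn}$. One also has to match the constant $C$ used here with the one from Lemma~\ref{balls}, so that $\partial B_{jn}$ lies outside $\Omega_n^-$ and the uniform boundary bound $|\phi_{r_n}|^2\geq\tfrac12$ is available; everything else is a clean consequence of the energy identity in Corollary~\ref{corrol}(ii) and the growth constraints in Equation~\eqref{rn}.
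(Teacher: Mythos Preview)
Your proof is correct and follows the same overall strategy as the paper: you derive the same PDE $\Delta\log h_{jn}=2r_n(|\phi_{r_n}|^2-1)$, split $\log h_{jn}$ into its harmonic extension plus a zero-Dirichlet remainder, and control the harmonic piece via the boundary estimate coming from Lemma~\ref{balls} (the cancellation of the $\log r_n$ terms is exactly what the paper uses). The one genuine difference is how you handle the Dirichlet part $v_{2,n}$. The paper controls it through an $L^2$ route: it bounds $\|v_{r_n}\|_{L^2(B_n)}$ by $\|w_{r_n}\|_{L^2(B_n)}+\lambda_1(B_n)^{-1}\|\Delta v_{r_n}\|_{L^2(B_n)}$ using the first Dirichlet eigenvalue, and then applies Cauchy--Schwarz to pass to the integral. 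This produces a contribution of order $N_n^4/r_n$ after summing and dividing by $N_n$. Your Green's-function argument is more direct: the identity $\int_{B_{jn}}G_R(z,w)\,dz=(R^2-|w-z_{jn}|^2)/4$ together with the global energy bound from Corollary~\ref{corrol}(ii) gives an $L^1$ bound on $v_{2,n}$ immediately, yielding the sharper order $N_n^2/r_n$. Both estimates go to zero under the constraints in Equation~\eqref{rn}, so nothing is gained materially, but your route is slightly more elementary and avoids the spectral step.
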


\subsubsection*{Step 4: Proof of item (iii)}

Taking into account item~(ii) in Corollary~\ref{corrol}, it is enough to show that
\[
\lim_{n\to\infty} r_{n}\int_{\CC \setminus \DD}(1-|\phi_{r_n}|^2) = 0\,.
\]
By Lemma \ref{outsid}, we have the estimate
\[
\int_{\CC \setminus \DD} r_n(1-|\phi_{r_n}|^{2}) \leq r_n e^{-c \sqrt{r_n} F(N_n)} \int_{\CC \setminus \DD} e^{-c \sqrt{r_n} |z-1|}\,.
\]
Accordingly, since $F(N_n) \sqrt{r_n} \rightarrow \infty$ faster than $\log r_n$, the claim follows.

\subsubsection*{Step 5: Proof of items (ii) and (iv)}

Once item~(iii) has been established, to prove item~(ii) it is enough to show that, for any function $f\in C^\infty(\overline\DD)$,
\[
\int_{\DD} r_{n}(1-|\phi_{r_n}|^2)  f =2 \pi \sum_{z_{jn} \in \cP_n} m_{jn}f(z_{jn})+e(r_n)\,,
\]
with an error satisfying  $\lim_{n\to\infty}{e(r_n)}/{N_n} = 0$.

It will be convenient to work with the function $u_{r_n}$ defined as
\[
 u_{r_n}:=\log |\phi_{r_n}|^2.
\]
Since $|\phi_{r_n}|< 1$ (cf. Corollary~\ref{corrol}), the function $u_{r_n}$ is negative. It is not hard to check that the function $u_{r_n}$ satisfies, as a distribution, the PDE (see e.g.~\cite[Chapter 3.3]{JT})
\begin{equation}\label{ur}
\Delta u_{r_n} +2 r_n (1-e^{u_{r_n}})=4\pi \underset{z_{j_n} \in \cP_n}{\sum} m_{jn} \delta (z-z_{jn})\,.
\end{equation}
In terms of $u_{r_n}$, the measure $\sigma_{n}$ reads as
\[
\sigma_{n}=\frac{r_n(1-e^{u_{r_n}}) dx \wedge dy}{\int_{\DD} da_{r_n}}\,.
\]
Noticing that Equation~\eqref{ur} implies that, for any $f \in C^{\infty}(\overline\DD)$,
\[
r_n \int_{\DD} (1-e^{u_{r_n}}) f =-\frac{1}{2}\int_{\DD} f \Delta u_{r_n}  + 2\pi \underset{ \cP_n}{\sum}  m_{jn}f(z_{jn})\,,
\]
we infer that item~(ii) follows if we prove that
\[
\lim_{n\to\infty}\frac{\bigg|\int_{\DD} f \Delta u_{r_n} \bigg|}{N_n} =0\,,
\]
for all $f\in C^\infty(\overline\DD)$.

To this end, we first integrate by parts to obtain
\begin{equation}\label{laplaceu}
\int_{\DD} f \Delta u_{r_n}= -\int_{\DD} \nabla u_{r_n} \cdot  \nabla f+\int_{\partial \DD} f \nabla u_{r_n} \cdot \nu \, d\theta\,,
\end{equation}
where $\nu$ is the outward pointing unit normal vector at the boundary of the disk. Now, by Lemma~\ref{outsid}, for any point $z\in\partial\DD$  and all $n$ large enough, we have the estimate
\[
|\nabla u_{r_n}|(z)=\frac{1}{|\phi_{r_n}(z)|^{2}}|\nabla |\phi_{r_n}(z)|^{2}| \leq C \sqrt{r_n} e^{-c F(N_n) \sqrt{r_n}}\,.
\]
Therefore,
\[
\left|\int_{\partial \DD} f \nabla u_{r_n}\cdot\nu\right| \leq C\|f\|_{L^1(\partial\DD)} \sqrt{r_n} e^{-c F(N_n) \sqrt{r_n}}\,,
\]
which goes to zero as $n\to\infty$ because $F(N_n) \sqrt{r_n}$ tends to infinity faster than $\log(r_n)$ (cf. Equation~\eqref{rn}).

As for the first summand in Equation~\eqref{laplaceu}, a second integration by parts yields
 \[
- \int_{\DD} \nabla u_{r_n} \cdot  \nabla f= \int_{\DD} u_{r_n} \Delta f-\int_{\partial \DD} u_{r_n} \nabla f \cdot \nu d\theta\,.
 \]
Again, using Lemma~\ref{outsid}, the rightmost term is bounded as
 \[
\bigg|\int_{\partial \DD} u_{r_n} \nabla f \cdot \nu d\theta \bigg| \leq C\|f\|_{W^{1,1}(\partial\DD)} e^{-c F(N_n) \sqrt{r_n}}\,,
 \]
which again goes to zero as $n\to\infty$. Finally, using that $u_{r_n}<0$, it is clear that
 \[
 -\int_{\DD} u_{r_n} \Delta f \leq -\|f\|_{C^{2}(\DD)} \int_{\DD} u_{r_n} \,,
 \]
so our main claim follows if we show that
\begin{equation}\label{eq.ninf}
\lim_{n\to\infty}\frac{1}{N_n}\int_{\DD} u_{r_n} =0\,.
\end{equation}

To prove this, we divide the integral into two parts, the disks $B(z_{jn}, F(N_n))$, and the set $\Omega^{+}_{n}$:
\[
-\int_{\DD} u_{r_n}=-\int_{\Omega_{n}^{+}} u_{r_n}-\sum_{\cP_n} \int_{B(z_{jn}, F(N_n))} u_{r_n} \,.
\]
By Lemma \ref{outsid}, for any $z \in \Omega^{+}_{n}$ we can write the bound
\[
|1-|\phi_{r_n}(z)|^{2}| \leq e^{-cF(N_n)\sqrt{r_n}} \ll 1
\]
provided that $n$ is large enough. Thus, taking the Taylor expansion of
\[
u_{r_n}={\log  }(1-(1-|\phi_{r_n}|)^2)
\]
we obtain
\begin{multline*}
-\int_{\Omega_{n}^{+}} u_{r_n}=-\int_{\Omega_{n}^{+}} \log |\phi_{r_n}|^2 \leq \int_{\Omega_{n}^{+}} (1-|\phi_{r_n}|)^2+C \int_{\Omega_{n}^{+}} (1-|\phi_{r_n}|^2)^2  \\ \leq C e^{-c F(N_n) \sqrt{r_n}}\,,
\end{multline*}
which tends to $0$ as $n\to\infty$.

To bound the integral
\[
-\int_{B(z_{jn}, F(N_n))} u_{r_n}\,,
\]
we write it for large enough $n$ as
\[
-\int_{B(z_{jn}, F(N_n))} u_{r_n}=-\int_{B(z_{jn}, \frac{CN_n}{\sqrt{r_n}})} u_{r_n}-\int_{B(z_{jn}, F(N_n))\setminus B(z_{jn}, \frac{CN_n}{\sqrt{r_n}}) } u_{r_n}\,,
\]
where $C$ is the constant in Lemma~\ref{balls} and we have used Equation~\eqref{littleball}. By Lemma~\ref{balls}, we know that $|\phi_{r_n}|^2 \geq \frac{1}{2}$ on the set $B(z_{jn}, F(N_n))\setminus B(z_{jn}, \frac{CN_n}{\sqrt{r_n}})$, so on this set
\[
0\leq -u_{r_n} \leq \log 2\,,
\]
which allows us to write the bound
\begin{equation}\label{anillos}
-\int_{B(z_{jn}, F(N_n))\setminus B(z_{jn}, \frac{CN_n}{\sqrt{r_n}}) } u_{r_n} \leq C F(N_n)^2\,.
\end{equation}
On the other hand, by Lemma~\ref{insid}, we can express $u_{r_n}$ in the disk $B(z_{jn}, \frac{C N_n}{\sqrt{r_n}})$ as
\[
u_{r_n}(z)= \log(h_{jn})+m_{jn}  \log r_n+2 m_{jn} \log(|z-z_{jn}|)\,,
\]
so we deduce
\begin{multline*}
-\int_{B(z_{jn}, \frac{CN_n}{\sqrt{r_n}})} u_{r_n} \leq -\int_{B\big(z_{jn}, \frac{CN_n}{\sqrt{r_n}}\big)} \bigg[\log(h_{jn})-\pi m_{jn}\bigg(\frac{CN_n}{\sqrt{r_n}}\bigg)^2 \log r_n \\-2\pi m_{jn} \bigg(\frac{CN_n}{\sqrt{r_n}}\bigg)^2 \log  \bigg(\frac{CN_n}{\sqrt{r_n}}\bigg)+\pi m_{jn}\bigg(\frac{CN_n}{\sqrt{r_n}}\bigg)^2\bigg]\,.
\end{multline*}
The first term after the inequality divided by $N_n$ goes to zero because of Lemma~\ref{insid}, so putting the other terms together with the one coming from Equation~\eqref{anillos}, Equation~\eqref{eq.ninf} follows if we show that the quantity
\begin{align*}
 \frac{1}{N_n} \sum_{\cP_n} \bigg[C F(N_n)^2-\pi m_{jn} \bigg(\frac{CN_n}{\sqrt{r_n}}\bigg)^2 \log r_n  \\-2\pi m_{jn} \bigg(\frac{CN_n}{\sqrt{r_n}}\bigg)^2 \log  \bigg(\frac{CN_n}{\sqrt{r_n}}\bigg)
+\pi m_{jn}\bigg(\frac{CN_n}{\sqrt{r_n}}\bigg)^2 \bigg]\\\leq C\bigg( F(N_n)^2+ \bigg(\frac{N_n}{\sqrt{r_n}}\bigg)^2 \log r_n  + \bigg(\frac{N_n}{\sqrt{r_n}}\bigg)^2 \log  N_n\bigg)
\end{align*}
goes to zero as $n \rightarrow \infty$. But this is evident, because, by construction, if $\sigma_{\DD}$ is not a point measure,
$$\lim_{n\to\infty}F(N_n)=0\,,$$
and the other terms also tend to $0$ as $n\to\infty$ by the conditions in Equation~\eqref{rn}. If $\sigma_{\DD}$ is a point measure, and thus $N_{n}$ stays constant for all $n\geq n_0$, we reach the same conclusion by substituting in the argument above the sequence $F(N_n)$ by a sequence $F_{n}$ of positive numbers, smaller than $\epsilon_{n_0}$, and going to zero as $n \rightarrow \infty$.

This completes the proof of item~(ii). Concerning item~(iv), we simply recall that the condition~\eqref{rn} is verified by any $d$-Frostman measure upon choosing a sequence of $r_n$ satisfying Equation~\eqref{eq.Frost}. Proposition~\ref{rescaleddirac} then follows.

\subsection{Proof of the auxiliary lemmas}\label{SS.lem}

In this section we prove Lemmas~\ref{balls},~\ref{outsid} and~\ref{insid}, which are instrumental in the previous section. We follow the same notation and assumptions as before without further mention.

\subsubsection{Proof of Lemma \ref{balls}}

The claim obviously follows if we show that for any points $p_n$ and $q_n$ in the same connected component of $\Omega^{-}_{n}$ there is a constant $C$ (independent of $n$) such that
\[
\dist (p_n, q_n)\leq \frac{CN_n}{\sqrt{r_n}}\,.
\]

Indeed, let $\gamma_{n}$ be a smooth embedded curve inside $\Omega^{-}_{n}$, joining the points $p_n$ and~$q_n$ (which exists because $\Omega^{-}_{n}$ is an open set). By definition, any point  $z \in \gamma_{n}$ satisfies that $|\phi_{r_n}|^{2}(z)< \frac{1}{2}$. Using that
\[
|\nabla |\phi_{r_n}|^{2}(z)|\leq C \sqrt{r_n}
\]
for all $z\in\CC$ by Corollary~\ref{corrol}, we infer that there is a constant $C$ (independent of $n$) such that, for any $\delta>0$ as small as desired, all the points within a distance $\frac{\delta}{C\sr_n}$ of $\gamma_{n}$ satisfy $|\phi_{r_n}|^{2}\leq \frac{1}{2}+\delta$.

Fixing a small constant $\delta>0$, let us denote by $U_n$ the aforementioned set of points $z\in \Cone$ at a distance smaller or equal than $\frac{\delta}{C\sr_n}$ from $\gamma_{n}$. The area of $U_n$ is bounded from below by
\[
\int_{U_n} dx\wedge dy \geq |\gamma_{n}| \frac{\delta}{C\sr_n}\,,
\]
where by $|\gamma_n|$ we denote the length of the curve $\gamma_n$. Therefore,
\[
r_n\int_{U_n} (1-|\phi_{r_n}|^{2}) \geq r_n |\gamma_n| \Big(\frac{1}{2}-\delta \Big) \frac{\delta}{C\sr_n} \geq C\sr_n |\gamma_n|\,,
\]
for some constant $C$ that depends on $\delta$ but not on $n$. On the other hand, notice that, by Corollary~\ref{corrol},
\[
r_n\int_{U_n} (1-|\phi_{r_n}|^{2}) \leq r_n \int_{\Cone} (1-|\phi_{r_n}|^{2}) =2\pi N_n\,.
\]
Therefore, combining both inequalities we can bound the length of $\gamma_n$ as
\[
|\gamma_{n}| \leq C\frac{N_n}{\sr_n}\,
\]
for some $n$-independent constant $C$. The claim follows because the length of $\gamma_n$ is always greater or equal than the distance between $p_n$ and $q_n$.

\subsubsection{Proof of Lemma \ref{outsid}}
We recall Equation~\eqref{littleball}, i.e.,
$$B(z_j, \frac{CN_n}{\sr_n}) \subset B(z_{jn}, F(N_n))\,.$$
Then Lemma~\ref{balls} implies that
\[
\Omega_{n}^{-}(z_{jn}) \subset B(z_{jn}, F(N_n))\,.
\]
In particular, since $\epsilon_n\geq F(N_n)$ by definition, it follows that
\[
\Omega_{n}^{-}(z_{jn})  \cap \Omega_{n}^{-}(z_{kn})=\emptyset
\]
for any $j \neq k$. Let us estimate the infimum of $\dist (z, \Omega_{n}^{-})$ for $z \in \Omega_{n}^{+}$. It is clear that we can take $z$ on the boundary $\partial B(z_{jn}, F(N_n))$ for some $j$, in which case Lemma~\ref{balls} implies that, for some $C>0$,
\[
\dist (z, \Omega_{n}^{-})  \geq F(N_n)-\frac{C N_n}{\sqrt{r_n}}=F(N_n) \bigg(1-\frac{C N_n}{F(N_n)\sqrt{r_n}}\bigg) \,.
\]
Then, we deduce from Equation~\eqref{rn} that for any given $\delta>0$ and any large enough $n$ we have
\[
\dist (z, \Omega_{n}^{-})  \geq (1-\delta) F(N_n) \gg\frac{1}{\sqrt{r_n}}\,,
\]
where we have used that $\lim_{n\to\infty}F(N_n)\sqrt{r_n}=\infty$. The first two statements in Lemma~\ref{outsid} then follow by applying item~(iii) of Corollary~\ref{corrol}.

The two other statements concerning points $z \in \CC \setminus  \DD$ also follow from item (iii) in Corollary \ref{corrol} upon noticing that
\[
\dist (z, \Omega_{n}^{-}) \geq \dist (z, \partial \DD)+\dist (\partial \DD, \Omega_{n}^{-}) \geq ||z|-1|+ (1-\delta) F(N_n)\,.
\]
This completes the proof of the lemma.

\subsubsection{Proof of Lemma \ref{insid}}
It is well known, cf.~\cite[Proposition~5.1]{JT}, that any solution $\phi$ for the $r=1$ vortex equations can be written as
\[
\phi(z)=(h_k (z))^{1/2} (z-z_k)^{m_k}
\]
on a disk that contains just one zero $z_{k}$. Here $h_k(z)$ is a smooth non-vanishing function on the disk and $m_k$ is the multiplicity of the zero. By rescaling, we get the first statement in Lemma \ref{insid}, that is, we can represent $\phi_{r_n}$ as
\begin{equation}\label{hdef}
|\phi_{r_n}(z)|^2=h_{jn}(z) r^{m_{jn}}_n |z-z_{jn}|^{2m_{jn}}\,.
\end{equation}
Notice that this representation holds on the disk $B(z_{jn},\frac{CN_n}{\sqrt{r_n}})$ for some constant $C>0$ because, by construction, $\epsilon_n\geq F(N_n)\geq \frac{CN_n}{\sqrt{r_n}}$, and hence $z_{jn}$ is the only zero of $\phi_{r_n}$ in such a disk.

For notational simplicity, we define the smooth function $v_{r_n}: B(z_{jn}, \frac{CN_n}{\sqrt{r_n}})\rightarrow \RR$ as
\[
v_{r_n}(z):=\log h_{jn}(z)\,,
\]
and we set $B_{n}:= B(z_{jn},\frac{CN_n}{\sqrt{r_n}})$.

To prove the estimate for $h_{jn}$ in Lemma~\ref{insid}, we first notice that
\begin{equation}\label{vineq1}
\bigg|\int_{B_n} v_{r_n}\bigg| \leq \pi^{\frac{1}{2}} \frac{C N_n}{\sqrt{r_n}} \|v_{r_n}\|_{L^2(B_n)}\,.
\end{equation}
Our goal is to bound the $L^2$ norm $\|v_{r_n}\|_{L^2(B_n)}$. To this end, we first observe that, if $w_{r_n}$ is the unique harmonic function on the disk $B_n$ that coincides with $v_{r_n}$ at the boundary, we have the inequality
\begin{equation}\label{vineq2}
\|v_{r_n}\|_{L^2(B_n)} \leq \|w_{r_n}\|_{L^{2}(B_n)}+\frac{1}{\lambda_{1}(B_n)} \|\Delta v_{r_n}\|_{L^2(B_n)}\,
\end{equation}
where $\lambda_{1}(B_{n})=\frac{c_0r_n}{N_n^2}$ is the first eigenvalue of the Dirichlet Laplacian on the disk $B_n$ (for some constant $c_0$). This estimate follows easily from the min--max characterization of Dirichlet eigenvalues.

Now, the maximum principle for harmonic functions allows us to write
\[
\sup_{B_n} |w_{r_n}|=\sup_{\partial B_n} |w_{r_n}|=\sup_{\partial B_n} |v_{r_n}|\,,
\]
and therefore,
\[
\|w_{r_n}\|_{L^2(B_n)} \leq \frac{CN_n}{\sqrt{r_n}}\,\sup_{\partial B_n} |v_{r_n}|\,.
\]
To obtain a bound of $\sup_{\partial B_n} |v_{r_n}|$, we recall that $\Omega_{n}^{-}(z_{jn}) \subset B_n$ by Lemma~\ref{balls}, so $|\phi_{r_n}|^2\geq \frac12$ on $\partial B_n$, which implies by Equation~\eqref{hdef}
\[
h_{jn}|_{\partial B_n} \geq \frac{1}{2} \frac{1}{(C N_n)^{2 m_{jn}}}\,,
\]
and hence
\[
v_{r_n}|_{\partial B_n} \geq -\log 2-2m_{jn}\log(C N_n)\,.
\]
On the other hand, $|\phi_{r_n}|^2 \leq 1$, so applying again Equation~\eqref{hdef} and taking the logarithm, we get the upper bound
\[
v_{r_n}|_{\partial B_n}  \leq - 2m_{jn}\log(C N_n)\,.
\]
We then conclude that
\[
\sup_{\partial B_n} |v_{r_n}| \leq |2m_{jn}\log(C N_n)+1| \leq 2m_{jn}|\log(C N_n)+1|\,,
\]
and therefore
\begin{equation}\label{vineq3}
\|w_{r_n}\|_{L^2(B_n)} \leq  \frac{C N_n m_{jn}}{\sqrt{r_n}} |\log(C N_n)+1|\,.
\end{equation}

Finally, to obtain a bound for the $L^2$ norm of $\Delta v_{r_n}$, we use Equation~\eqref{ur} and the fact that
\[
\Delta \log |z-z_{jn}|^{2m_{jn}}=4 \pi m_{jn} \delta(z-z_{jn})\,,
\]
to infer that the smooth function $v_{r_n}$ satisfies the PDE
\begin{equation}\label{vr}
\Delta v_{r_n}=2 r_{n}(e^{u_{r_n}}-1)\,.
\end{equation}
Accordingly,
\[
\|\Delta v_{r_n}\|_{L^2(B_n)}=2 r_{n} \bigg(\int_{B_n} |e^{u_{r_n}}-1|^2 \bigg)^{\frac{1}{2}}\,,
\]
and since $|\phi_{r_n}|^2=e^{u_{r_n}} < 1$, we obtain the estimate
\begin{equation}\label{vineq4}
\|\Delta v_{r_n}\|_{L^2(B_n)} \leq C N_n \sqrt{r_n}\,.
\end{equation}
Putting together Equations~\eqref{vineq1},~\eqref{vineq2},~\eqref{vineq3} and~\eqref{vineq4} we get the bound
\begin{align}\label{vineq5}
\bigg|\int_{B_n} v_{r_n}\bigg| \leq  \frac{Cm_{jn}N^{2}_n}{r_n} |\log(C N_n)+1|+\frac{CN^{4}_n}{r_n}  \,,
\end{align}
and finally, using that $N_n=\sum_{\cP_n} m_{jn}$ and $k_n \leq N_n$, we obtain
\begin{align*}
\frac{1}{N_n} \sum_{\cP_n} \bigg|\int_{B_n} v_{r_n}\bigg| \leq  \frac{1}{N_n} \sum_{\cP_n} m_{jn} \bigg(\frac{CN^{2}_n}{r_n} |\log(C N_n)+1|\bigg)+\frac{1}{N_n} \sum_{\cP_n} \frac{CN^{4}_n}{2 r_n} \\ \leq \frac{CN^{2}_n}{r_n} |\log(C N_n)+1|+\frac{C N^{4}_n}{r_n}
\end{align*}
which goes to zero as $n\to\infty$ by the way the sequence of $r_n$ was constructed, cf. Equation~\eqref{rn}. This completes the proof of the lemma.

\section{Energy growth and ergodicity}\label{S.ergodic}

In this final section we include a simple observation on the limiting invariant measures that one obtains when the energy growth of the sequence of solutions to the modified Seiberg--Witten equations is linear. By this we mean that there exists a positive constant $C>0$, independent of $n$, such that
\[
C^{-1} r_n\leq \cE_n\leq C r_n\,.
\]

\begin{theorem}\label{T.ergodic}
Let $(r_{n},\,\psi_{n},\,A_{n})_{n=1}^\infty$ be a sequence of solutions to the modified Seiberg--Witten equations as in Theorem~\ref{T.taubes}. If the energy sequence $\cE_n$ has linear growth, then the vector field~$X$ cannot be ergodic (with respect to the Lebesgue measure).

\end{theorem}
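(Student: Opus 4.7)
The plan is to argue by contradiction: assuming that $X$ is ergodic with respect to $\mu$, I will show that the limiting measure $\sigma_\infty$ provided by Theorem~\ref{T.taubes}(ii) must coincide with $\mu$ itself, which will contradict the inequality $\int_M *(\gamma\wedge d\gamma)\,d\sigma_\infty\leq 0$ together with the standing hypothesis $\cH(X)>0$.

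The crucial step is to upgrade weak convergence of $\sigma_n$ into an absolute-continuity statement. From the identity~\eqref{sigma_n} I would write $\sigma_n = f_n\,\mu$ with
\[
f_n := \frac{r_n(1-|\alpha_n|^2)}{\cE_n} + O(\cE_n^{-1}).
\]
Taubes's a priori bound $|\alpha_n|^2\leq 1+C/r_n$ from Lemma~\ref{L.TaubesEstimates} gives $|1-|\alpha_n|^2|\leq 1+C/r_n$, and combining this with the linear energy growth assumption $\cE_n\geq C^{-1} r_n$ yields a uniform bound $\|f_n\|_{L^\infty(M)}\leq C'$ independent of $n$. By Banach--Alaoglu, along the subsequence that produces $\sigma_\infty$, the densities $f_n$ converge in the weak-$*$ topology of $L^\infty(M,\mu)$ to some $f_\infty$ with $\|f_\infty\|_{L^\infty}\leq C'$; testing against continuous functions identifies $\sigma_\infty = f_\infty\,\mu$.

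Since both $\sigma_\infty$ and $\mu$ are $X$-invariant probability measures and $X$ is divergence-free, the bounded density $f_\infty$ satisfies the transport equation $X\cdot f_\infty = 0$ in the distributional sense. Ergodicity of $X$ with respect to $\mu$ then forces $f_\infty$ to be constant $\mu$-almost everywhere, and the normalizations $\sigma_\infty(M)=\mu(M)=1$ give $f_\infty\equiv 1$, so $\sigma_\infty = \mu$. Substituting into Theorem~\ref{T.taubes}(ii) yields
\[
\cH(X) = \int_M *(\gamma\wedge d\gamma)\,\mu = \int_M *(\gamma\wedge d\gamma)\,d\sigma_\infty \leq 0,
\]
contradicting the positivity of the helicity.

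The only delicate point is the uniform $L^\infty$ bound on the densities $f_n$: this is precisely where the linear growth assumption enters, since for strictly sublinear growth one would have $r_n/\cE_n\to\infty$ and the bound would break down. Everything else---weak-$*$ compactness, $X$-invariance of $f_\infty$ passed through the weak limit, and the ergodic alternative forcing constancy---is standard machinery once the uniform density bound is available.
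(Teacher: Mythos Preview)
Your proposal is correct and follows essentially the same route as the paper: use the linear-growth hypothesis to get a uniform bound on the densities $f_n$, deduce $\sigma_\infty=f\mu$ for an $X$-invariant $f$, invoke ergodicity to force $f\equiv 1$, and contradict Theorem~\ref{T.taubes}(ii) against $\cH(X)>0$. The only cosmetic difference is that the paper bounds $\sigma_n(U)\leq C\mu(U)$ directly, passes to the limit to obtain absolute continuity, and then applies Radon--Nikodym to get $f\in L^1$, whereas you extract the density via weak-$*$ compactness in $L^\infty$; your version gives the slightly stronger conclusion $f\in L^\infty$, but this extra information is not needed.
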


\begin{proof}
By Equation~\eqref{sigma_n}, the signed measures $\sigma_n$ can be written as
\[
\sigma_{n}(U)=\frac{r_n\int_{U}(1-|\al_n|^2) \mu}{\cE_n}+O(\cE^{-1}_n)
\]
for any domain $U \subset M$.
Accordingly, if the energy growth is linear, we obtain
\[
\sigma_{n}(U) \leq C \int_{U}|1-|\al_n|^2| \mu + O(r_{n}^{-1}) \leq C\mu(U)\,,
\]
where we have used that $|\alpha_n|$ is uniformly bounded. Taking the limit $n\to\infty$, this implies that $\sigma_{\infty}(U)=0$ whenever $\mu(U)=0$. In other words, $\sigma_{\infty}$ is absolutely continuous with respect to $\mu$.

Then, it is well known that we can write $\sigma_{\infty}=f \mu$, where $f \in L^1(M)$ is the Radon-Nikodym derivative of $\sigma_{\infty}$ with respect to $\mu$. Since both $\sigma_{\infty}$ and $\mu$ are invariant measures, $f$ can be understood as an $L^1$ function that is invariant under the flow of $X$. Therefore, if~$X$ is ergodic, the ergodic theorem implies that $f$ is constant, i.e., $f=\int_Mf\mu=1$, at almost every point of $M$.

However, the main observation is that $f$ cannot be a.e. constant, because item (ii) in Theorem \ref{T.taubes} ensures that, for any 1-form $\gamma$ such that $d\gamma=i_{X} \mu$:
\[
\int_M \star(\gamma \wedge d \gamma)f\mu=\sigma_{\infty}(\star(\gamma \wedge d \gamma)) \leq 0\,,
\]
while $\int_M\star(\gamma \wedge d \gamma)\mu=\cH(X)>0$ by hypothesis. This contradiction shows that $X$ cannot be ergodic.
\end{proof}

\section*{Acknowledgments}

This work has received funding from the European Research Council (ERC) under the European Union's Horizon 2020 research and innovation programme through the grant agreement~862342 (A.E.). It is partially supported by the grants CEX2019-000904-S, RED2018-102650-T, and PID2019-106715GB GB-C21 (D.P.-S.) funded by MCIN/AEI/10.13039/501100011033, and a Fields Ontario Postdoctoral Fellowship (F.T.L.) financed by the NSERC grant RGPIN-2019-05209. F.T.L. also wishes to thank the Max Planck Institute for Mathematics for its hospitality and financial support during part of this work.

\appendix

\section{The Seiberg--Witten invariant measures via foliation cycles}\label{S.foliated}

In this appendix, which is of independent interest, we revisit Taubes's Theorem~\ref{T.taubes} from the viewpoint of Sullivan's theory of foliated cycles, and refine it showing the property stated in Remark~\ref{R.=0}. Let us first recall some concepts.

\subsection{Preliminaries} We denote by $\cZ^{p}$ the space of $p$-currents on $M$, i.e., the continuous dual of the space of smooth $p$-forms. Notice that any $2$-form $\Theta$ on $M$ can be identified with a $1$-current (denoted in the same way) as follows: for any $1$-form $\theta$, the action of $\Theta$ on $\theta$ is given by $\int_M \Theta\wedge\theta$. Let $\mathcal{Z}_{X}$ and $\mathcal{C}_X$ be the set of foliation currents and of foliation cycles of the vector field $X$, respectively. We recall (see e.g.~\cite{SU76}) that a \emph{foliation current} of a vector field $X$ is a \mbox{1-current} that can be approximated arbitrarily well (in the weak topology) by $1$-currents supported on segments of orbits of the vector field. Equivalently, a foliation current can be approximated by $1$-currents of the form
\[
\sum_{i=1}^N c _i\, \delta^{p_i}_X\,,
\]
with $N \in \mathbb{N}$, $c_i \in [0, \infty)$ and $p_i \in M$, and where for any $p\in M$ the $1$-current $\delta^{p}_X$ is defined as
\[
\delta^{p}_X(\theta)=\theta_{p}(X) \text{ for any 1-form $\theta$} \,.
\]
A \emph{foliation cycle} is a closed foliation current, i.e., a foliation current whose kernel contains the linear subspace of exact 1-forms. It is straightforward to see that foliation cycles are in one to one correspondence with invariant measures of the vector field $X$.

Following~\cite{Ana}, associated to the vector field $X$, we also define the subset $\cF_X$ of the space of 1-currents, consisting of the boundaries of zero-flux surfaces, i.e.,
$$
\cF_X =\Big\{\partial S \,:\, S \, \text{ is a surface with $\int_{S} i_X \mu=0$}\Big\}\,.
$$
Notice that a 1-current $c$ being in $\overline{\cF_{X}}$ (where the closure is taken with respect to the weak topology in the space of 1-currents) is equivalent to the fact that, for any $\gamma$ such that $d \gamma=i_{X} \mu$
\[
c(\gamma)=0\,.
\]
In particular, if the 1-current $c$ is a smooth 2-form, this means that there is a $1$-form $b$ with $d b=c$ and
\[
\int_M b \wedge i_{X} \mu =0\,.
\]

Before stating the main theorem of this appendix, following the same notation as in~\cite[Section~2]{T09} we introduce some functionals depending on solutions $(r, \psi_r, A_r)$ to the modified Seiberg--Witten equations. We first define the Chern--Simons functional
\[
\fcs:=\int_{M} a_r \wedge da_r\,,
\]
where $a_r:=A_r-A_1$ is a 1-form and $A_1$ is a fixed connection on the bundle $E$ satisfying
\[
2 F_{A_1}+F_{A_0}=0\,.
\]
Recall that the determinant bundle of $E \oplus K^{-1}E$ has torsion first Chern class, so $[2F_{A_1}+F_{A_0}]=0$, and such an $A_1$ always exists. We also define
\[
\fe:=\int_{M} \gamma \wedge da_r\,,
\]
where $\gamma$ is a 1-form so that $d\gamma=i_X \mu$, and
\[
\fe_{\nu}:=\int_{M} (\Gamma+\bom') \wedge da_r\,,
\]
where $\Gamma$ and $\bom'$ are the perturbing 1-forms introduced in Equation~\eqref{bom} and Theorem~\ref{T.existence}, respectively. Finally, we define the action functional
\[
\fa:=\frac{1}{2} \fcs-r\fe-\fe_{\nu}+r \int_{M} \psi^{\dagger}_{r} D_{A_r} \psi_r \,.
\]
One can check that for each fixed $r$, the solutions $(\psi_r,A_r)$ to the Seiberg--Witten equations are critical points of the functional $\fa$.

Finally, we  state a more detailed version of Taubes's Existence Theorem~\ref{T.existence} (see~\cite[Proposition 4.1]{T09}), which ensures that, in fact, the sequence of solutions $(r_n, \psi_n, A_n)$ comes from a piecewise smooth 1-parameter family of solutions $(r, \psi_r, A_r)$:

\begin{proposition}\label{smoothex}{(Taubes~\cite{T09})}
Let $(r_n, \psi_n,A_n)_{n=0}^{\infty}$ be a sequence of solutions to the Seiberg--Witten equations provided by Theorem~\ref{T.existence}. There is an increasing sequence $\{\rho_k\}_{k=1}^{\infty} \subset[1, \infty)$ with no accumulation points so that the following holds:
\begin{enumerate}
\item For each $k$, there is a smooth family of solutions $(r, \tilde{\psi}_r,\tilde{A}_r)$ to the Seiberg--Witten equations parametrized by $r \in (\rho_k, \rho_{k+1})$.
\item The associated functions $\fa(r)$, $\fcs(r)$, $\fe(r)$, $\fe_{\nu}(r)$ and $\cE_r$ defined by this family are smooth on the intervals $(\rho_k, \rho_{k+1})$. Moreover, there is a continuous function $\fa_{0}: [1, \infty) \rightarrow \RR$ such that for any $r \in [1, \infty)\setminus \{\rho_k\}_{k=1}^{\infty}$, $\fa_{0}(r)=\fa(r)$.
\item The sequence $\{r_n\}_{n=0}^{\infty}$ is contained in the set $[1, \infty)\setminus \{\rho_k\}_{k=1}^{\infty}$, and moreover $(\psi_n, A_n)=(\tilde{\psi}_{r_n}, \tilde{A}_{r_n})$ with energy $\cE_n\equiv\cE_{r_n}$.

\end{enumerate}
\end{proposition}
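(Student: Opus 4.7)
The plan is to realize the family of Seiberg--Witten solutions as a piecewise smooth curve in configuration space parametrized by~$r$, and to read off the smoothness of the functionals as a consequence. The approach rests on the variational interpretation of the equations as critical points of~$\fa$ and on standard Fredholm deformation arguments.

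First, I would observe that the equations~\eqref{SWrepeat} are the Euler--Lagrange equations of~$\fa$ at fixed~$r$, and that the linearization of the Seiberg--Witten operator at an irreducible solution is a self-adjoint Fredholm operator of index zero. A Sard--Smale transversality argument applied to the perturbation~$\bom'$ provided by Theorem~\ref{T.existence} ensures that, for generic~$\bom'$, the set of parameters $r\in[1,\infty)$ at which some irreducible solution of the chosen degree fails to be non-degenerate is a discrete subset $\{\rho_k\}$. Away from this set, the implicit function theorem deforms each non-degenerate solution smoothly in~$r$, and Taubes's a priori estimates (Lemma~\ref{L.TaubesEstimates}) combined with the compactness of the Seiberg--Witten moduli space show that these local deformations glue into a maximal smooth branch $(r,\tilde\psi_r,\tilde A_r)$ over each interval $(\rho_k,\rho_{k+1})$. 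Smoothness of $\fcs(r)$, $\fe(r)$, $\fe_\nu(r)$ and $\cE_r$ on these intervals is then immediate, since each is the integral of an expression depending smoothly on the family, and the fact that $\{\rho_k\}$ has no finite accumulation point follows from the same compactness argument: only finitely many transverse bifurcations can take place in any compact $r$-interval.

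The main obstacle, which I expect to require the most care, is the construction of the continuous extension $\fa_0:[1,\infty)\to\RR$ of~$\fa$ across the singular values~$\rho_k$. Along any smooth branch, differentiating the defining formula for~$\fa$ and using the equations~\eqref{SWrepeat} (in particular $D_{A}\psi=0$) yields a derivative of~$\fa$ expressible in terms of $\fe(r)$ and $\fe_\nu(r)$, so $\fa$ is absolutely continuous on each $(\rho_k,\rho_{k+1})$. At a singular value the raw function~$\fa$ may jump between branches, but the size of the jump is controlled by a discrete topological quantity --- namely, the spectral flow associated with the bifurcation --- which is fixed for a sequence of solutions of constant degree. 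Adding locally constant corrections that absorb these jumps then yields the required continuous function~$\fa_0$.

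Finally, to guarantee that the sequence $(r_n,\psi_n,A_n)$ of Theorem~\ref{T.existence} can be arranged to lie on these smooth branches, I would note that Theorem~\ref{T.existence} determines this sequence only up to a choice of~$\bom'$ in a small $C^3$-ball. A further application of the genericity argument allows one to pick~$\bom'$ so that $r_n\notin\{\rho_k\}$ for every~$n$, in which case $(\psi_n,A_n)=(\tilde\psi_{r_n},\tilde A_{r_n})$ and $\cE_n\equiv\cE_{r_n}$ as claimed.
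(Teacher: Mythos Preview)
The paper does not prove this proposition at all: it is stated with the attribution ``(Taubes~\cite{T09})'' and accompanied by the reference ``see~\cite[Proposition~4.1]{T09}'', so the paper's ``proof'' is simply a citation. Your sketch is therefore not to be compared with anything in the present paper, but rather with Taubes's original argument.

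As a sketch of how such a result is established, your outline is broadly in the right spirit (transversality for generic~$\bom'$, implicit function theorem on the open intervals, compactness to control the bifurcation set), but the mechanism you propose for the continuity of~$\fa_0$ is not the one Taubes uses and looks unlikely to work as stated. In~\cite{T09} the solutions of fixed degree arise from a min--max construction in Seiberg--Witten--Floer theory; the function~$\fa_0$ is the min--max value itself, and its continuity in~$r$ follows from general properties of min--max values under continuous deformation of the functional, not from ad hoc spectral-flow corrections added branch by branch. Your proposal of ``adding locally constant corrections that absorb the jumps'' would require knowing a priori that the jumps are coherent enough to be cancelled globally, which is exactly what the min--max characterization supplies and which a purely local bifurcation picture does not. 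If you want to give a self-contained argument here rather than cite~\cite{T09}, you would need to invoke the min--max origin of the solutions explicitly.
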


\subsection{Main theorem}

\begin{theorem}\label{T.foliated}
Suppose that the helicity of the vector field $X$ is positive. Let
$(r_{n},\,\psi_n,\,A_n)_{n=1}^\infty$ be a sequence of solutions
to the modified Seiberg--Witten equations as in Proposition~\ref{smoothex}. Assume that the sequence of energies $\cE_n$ is not bounded (i.e., $\liminf_{n\to\infty} \cE_n=\infty$). Then the sequence of $2$-forms
\[
\Si_n:=\frac{F_{A_n}}{\cE_n}
\]
converges, possibly after passing to a subsequence, to a foliation cycle $\Sigma_{\infty}$ of $X$. Moreover, if the one-parameter family of energies $\cE_r$ satisfies the estimate
\begin{equation}\label{eq.assum}
C_1r^\theta\leq \cE_r\leq C_2r^\theta\,,
\end{equation}
for some $\theta\in (0,1)$, some positive constants $C_1,C_2$ and all $r$ large enough, we have
\[
\Sigma_{\infty} \in \overline{\cF_{X}} \cap \cC_{X}\,.
\]
\end{theorem}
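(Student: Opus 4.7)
The plan will be to convert the convergence of the signed measures $\si_n$ from Theorem~\ref{T.taubes}(ii) into weak-$*$ convergence of the 1-currents $\Si_n$, to identify the limit $\Si_\infty$ with the foliation cycle naturally associated with $\si_\infty$, and then, under the power-law hypothesis $C_1r^\theta\leq\cE_r\leq C_2 r^\theta$ with $\theta\in(0,1)$, to prove via integration by parts and bounds on Taubes's action functional that $\Si_\infty$ annihilates every primitive of $i_X\mu$.

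First, applying the Hodge star to the first Seiberg--Witten equation gives $F_{A_n}=*[r_n(\la-\psi_n^\dagger\si\psi_n)+\bom]$, so for any smooth 1-form $\gamma$,
\[
\Si_n(\gamma)=\frac{1}{\cE_n}\int_M \langle r_n(\la-\psi_n^\dagger\si\psi_n)+\bom,\gamma\rangle\,\mu\,.
\]
Decomposing $\gamma=\gamma_\parallel+\gamma_\perp$ with respect to $X$ and using the identity $(\psi^\dagger\si\psi)(X)=|\al|^2-|\be|^2$ together with the a priori estimates of Lemma~\ref{L.TaubesEstimates}, the parallel contribution reduces to $\int_M\gamma(X)\,d\si_n+O(\cE_n^{-1})$, which by Theorem~\ref{T.taubes}(ii) converges to $\int_M\gamma(X)\,d\si_\infty$. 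The transverse contribution involves $r_n(\psi_n^\dagger\si\psi_n)_\perp$, of pointwise size $O(\sqrt{r_n})$; the Dirac relation~\eqref{dirackey} will be used to rewrite the products $\bar\al_n\be_n$ as derivative-like combinations of $|\al_n|^2$, and integration by parts against the smooth form~$\gamma_\perp$, combined with the anisotropic bounds of Lemma~\ref{L.bounds2}, shows this contribution vanishes as $n\to\infty$. This identifies $\Si_\infty(\gamma)=\int_M\gamma(X)\,d\si_\infty$. Closedness of $\Si_\infty$ is then immediate from the Bianchi identity $dF_{A_n}=0$ and Stokes's theorem, while the fact that $\Si_\infty\in\cC_X$ follows from Sullivan's identification~\cite{SU76} between $X$-invariant measures and foliation cycles via $\si\mapsto[\gamma\mapsto\int\gamma(X)\,d\si]$.

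For the refinement, fix any 1-form~$\gamma$ with $d\gamma=i_X\mu$. Stokes's theorem together with the pointwise identity $A\wedge i_X\mu=A(X)\,\mu$ (valid on a 3-manifold for any 1-form~$A$) yields
\[
\cE_n\,\Si_n(\gamma)=\int_M F_{A_n}\wedge\gamma=\int_M A_n\wedge d\gamma=\int_M A_n(X)\,\mu=\fe(r_n)+C_0\,,
\]
where $C_0:=\int_M A_1(X)\,\mu$ is a constant and the right-hand expression is gauge-invariant since $X$ is divergence-free. Hence $\Si_\infty(\gamma)=\lim_n\fe(r_n)/\cE_n$, and it suffices to show this limit vanishes. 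By Proposition~\ref{smoothex} we may work on each smoothness interval $(\rho_k,\rho_{k+1})$, where the critical-point property of $\fa$ combined with the envelope theorem yields $d\fa/dr=-\fe(r)$. Combining Taubes's sublinear a priori bound $|\fa_0(r)|\leq C r^{1-\delta_0}$ for some $\delta_0>0$ with the decomposition $\fa=\tfrac12\fcs-r\fe-\fe_\nu$ and the standard control $|\fcs(r)|+|\fe_\nu(r)|\leq C(\cE_r^2+\cE_r+1)$, one obtains $|r\fe(r)|\leq C(r^{1-\delta_0}+\cE_r^2)$. Under the hypothesis $\cE_r\leq C_2 r^\theta$ with $\theta<1$, this forces $|\fe(r)|/\cE_r\leq C(r^{-\delta_0-\theta}+r^{\theta-1})$, both exponents being strictly negative, and the claim follows.

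The principal obstacle will be Step~1 in the regime of very slow energy growth $\cE_n=o(\sqrt{r_n})$, where the transverse error $\tfrac{r_n}{\cE_n}\langle(\psi_n^\dagger\si\psi_n)_\perp,\gamma_\perp\rangle$ admits no useful pointwise bound; the Dirac-based integration by parts must be arranged so that each $\sqrt{r_n}$ factor is compensated by a derivative landing on the smooth test form~$\gamma$, via manipulations parallel to those in the proof of Proposition~\ref{P.albe}. A secondary difficulty appears in Step~3, where obtaining the quadratic-in-energy bound on the Chern--Simons functional requires a careful gauge choice and the Weitzenb\"ock formula applied to $D_{A_n}\psi_n=0$, while controlling $\fe_\nu$ demands a delicate tracking of the contribution of the perturbing 1-form~$\bom'$.
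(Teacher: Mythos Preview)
Your overall architecture is right, but two of your key inputs are not available in the form you assume, and one of your intermediate arguments is unnecessarily fragile.

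\textbf{The transverse term in Step~1.} You propose to kill the contribution of $r_n(\psi_n^\dagger\si\psi_n)_\perp$ by rewriting $\bar\al_n\be_n$ as a derivative via~\eqref{dirackey} and integrating by parts. Equation~\eqref{dirackey} relates $\nabla_A^\perp\al$ to $\nabla_A^\parallel\be$, not $\bar\al\be$ to a total derivative, so this rewriting is not straightforward; and the ``principal obstacle'' you flag is in fact not there. The paper handles this term by a direct estimate: from $|\be_n|^2\leq Cr_n^{-1}|1-|\al_n|^2|+Cr_n^{-2}$ (Lemma~\ref{L.TaubesEstimates}) and Cauchy--Schwarz,
\[
r_n\int_M|\al_n||\be_n|\,\mu\leq C\sqrt{r_n}\int_M|1-|\al_n|^2|^{1/2}\,\mu\leq C\Big(r_n\int_M|1-|\al_n|^2|\,\mu\Big)^{1/2}\leq C\cE_n^{1/2}\,,
\]
so the transverse contribution to $\Si_n(\theta)$ is $O(\cE_n^{-1/2})$. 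This already gives $\Si_\infty(\theta)=\int\theta(X)\,d\si_\infty$ for every test form~$\theta$, with no regime where the argument breaks down. The paper in fact bypasses the explicit identification and uses Hahn--Banach: it suffices to check $\Si_\infty(\theta)\geq 0$ whenever $\theta(X)\geq 0$, and the estimate above does precisely that.

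\textbf{The bounds invoked in Step~3.} Here there is a genuine gap. You appeal to two inputs that are not established in this setting: a universal sublinear bound $|\fa_0(r)|\leq Cr^{1-\delta_0}$, and a quadratic bound $|\fcs(r)|\leq C\cE_r^2$. Neither holds a priori. The available Chern--Simons estimate (from~\cite[Lemma~4.3]{T09}) is $|\fcs(r)|\leq Cr^{2/3}\cE_r^{4/3}$, and there is no independent bound on $\fa_0$: it must be \emph{produced} from the differential relation. The paper does this by passing to $\fb(r):=-2\fa_0(r)/r$, for which your envelope identity $d\fa_0/dr=-\fe$ combines with $\fa_0=\tfrac12\fcs-r\fe-\fe_\nu$ to give
\[
\frac{d\fb}{dr}=\frac{\fcs}{r^2}-\frac{2\fe_\nu}{r^2}\,,
\]
eliminating the unknown $\fe$ entirely. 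Integrating this with the $\fcs$ bound yields $|\fa_0(r)|\leq Cr\int_{r_0}^r(\cE_\rho/\rho)^{4/3}\,d\rho$. One then argues by contradiction: if $-\fe(r_n)\geq c\,\cE_n$ along the sequence, the decomposition of $\fa_0$ forces $|\fa_0(r_n)|\geq c\,r_n\cE_n$, and comparing with the integrated bound under $\cE_r\asymp r^\theta$, $\theta\in(0,1)$, gives a contradiction. Your route, by contrast, needs the two unestablished bounds simultaneously, and without them the inequality $|r\fe(r)|\leq C(r^{1-\delta_0}+\cE_r^2)$ has no justification.
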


We remark that the 1-current $\Sigma_{\infty}$ obtained in this theorem is related to the invariant measure $\sigma_{\infty}$ of Taubes's Theorem~\ref{T.taubes} in the following way: for any 1-form $\theta$ on $M$, we have
\[
\Sigma_{\infty}(\theta)=\sigma_{\infty}(\theta(X))\,.
\]
However,  the proof we give below is different from Taubes's proof of the existence of the invariant measure $\sigma_\infty$.
Additionally, we can interpret the property that $\Sigma_{\infty} \in \overline{\cF_{X}}$ as follows: for any 1-form $\gamma$ satisfying $d \gamma=i_X \mu$, define the function $h_{\gamma}:=\star (\gamma \wedge\, i_{X} \mu)=\gamma(X)$, which is precisely the density of the helicity functional. Then, as argued in the previous section,
$$0=\Sigma_\infty(\gamma)=\sigma_{\infty}(h_{\gamma})\,,$$
which is the refinement stated in Remark~\ref{R.=0}.

\subsection{Proof of Theorem~\ref{T.foliated}}

We divide the proof in three steps. First, we show that $\Si_{n}$ has a subsequence that converges to some non-trivial closed $1$-current $\Sigma_{\infty}$ (this is straightforward). Then, we prove that $\Si_{\infty}$ is a foliation cycle of $X$. Finally, we establish that $\Si_{\infty}(\gamma)=0$ for $d \gamma=i_{X} \mu$, so that $\Si_{\infty}\in \overline{\cF_{X}}\cap \cC_X$.

\subsubsection*{Step 1: $\Sigma_{n} \rightarrow \Sigma_{\infty}$ and $\partial \Sigma_{\infty}=0$}

For any 1-form $\theta$ it is straightforward to see that
\begin{equation}\label{current}
|\Sigma_{n}(\theta)|=|\sigma_n(\theta(X))| \leq C \|\theta\|_{L^{\infty}(M)} \,,
\end{equation}
which implies that the sequence $\Si_{n}$ is bounded in the weak topology. Since in the space of $1$-currents (with the weak topology) bounded subsets are precompact, there is a convergent subsequence. We denote by $\Si_{\infty}$ the limiting $1$-current. It is obvious that $\Sigma_{\infty}$ is not trivial (the zero current) because $\Sigma_{\infty}(\lambda)=1$ (recall that $\la$ is the $1$-form dual to $X$, so $\la(X)=1$).

We observe that the boundary operator $\partial : \cZ^{1}(M) \rightarrow \cZ^{0}(M)$ in the space of $1$-currents is defined by duality as
\[
\partial c(f):=c(df) \,,
\]
and is continuous in the weak topology. Since the curvatures $F_{A_n}$ are closed $2$-forms, we infer that for any $n \in \NN$ and any smooth function $f$
\[
\Sigma_{n}(df)=0\,,
\]
thus implying that $\Sigma_{\infty}(df)=0$, i.e., $\Sigma_{\infty}$ is a closed $1$-current.

\subsubsection*{Step 2: $\Sigma_{\infty} \in \cC_{X}$}

We proceed by contradiction. As is well known, the space of foliation currents $\cZ_{X}$ is a closed convex cone with compact convex base inside the space of $1$-currents. Suppose that $\Sigma_{\infty} \notin \cZ_{X}$. Then, by a standard application of the Hahn--Banach theorem, there is a hyperplane separating $\Sigma_{\infty}$ and $\cZ_{X}$; in other words, there is a continuous linear functional $\cL: \cZ^{1} \rightarrow \RR$ satisfying $\cL(c) \geq 0$ for any $c \in \cZ_{X}$ and $\cL(\Sigma_{\infty}) <0$.

Since the space of $1$-currents and the space of smooth $1$-forms are continuous duals of each other, we can identify the functional $\cL$ with a 1-form $\theta$ satisfying
\[
\theta_p(X) \geq 0 \text{ at any point $p \in M$}\,,
\]
and
\[
\Sigma_{\infty} (\theta) <0 \,.
\]
Thus, to prove that $\Sigma_{\infty}$ is a foliation current it suffices to check that for any $\theta$ with $\theta_p(X)\geq 0$, we must have $\Sigma_{\infty}(\theta) \geq 0$.  Indeed, let $\theta$ be any such 1-form, then
\[
\Sigma_n (\theta)=\frac{\int_M F_{A_n} \wedge \theta}{\cE_n}=\frac{r_n \int_M (1-|\al_n|^2+|\be_n|^2) \theta(X) \mu }{\cE_n}+ \frac{\int_M (r_n\psi_n^{\dagger} \sigma^{\perp} \psi_n + \bom) \wedge \theta }{\cE_n}\,.
\]
Here we have used the notation $\psi^{\dagger} \sigma^{\perp} \psi:= \star (\psi^{\dagger} \sigma \psi)- (\psi^{\dagger} \sigma(X) \psi) i_{X} \mu$. Now, taking an upper bound of the last term in the above equation, we can write
\[
\Sigma_n (\theta) \geq \frac{r_n \int_M (1-|\al_n|^2+|\be_n|^2) \theta(X) \mu }{\cE_n}- \|\theta\|_{L^{\infty}(M)} \frac{\int_M (r_n|\al_n| |\be_n|+C)\mu }{\cE_n} \,.
\]
Using the assumption $\theta(X)\geq 0$ on $M$ and the fact that $r_n(1-|\al_n|^2) \geq -C$ by Lemma~\ref{L.TaubesEstimates}, we obtain
\[
\Sigma_n (\theta)\geq -\frac{\|\theta\|_{L^\infty(M)}}{\cE_n}\big(C + r_n \int_M |\al_n| |\be_n|\mu \big)\,.
\]
Now, applying Lemma~\ref{L.TaubesEstimates} again and using Equation~\eqref{sigma_n}, we can bound the second summand in the above inequality as
\[
r_n \int_{M} |\al_n| |\be_n| \mu \leq C \sqrt{r_n} \int_M |1- |\al_n|^2|^{\frac{1}{2}} \mu \leq C\Big(r_n \int_M |1- |\al_n|^2|\mu\Big)^{\frac{1}{2}}\leq
C\cE_n^{\frac{1}{2}}\,.
\]
We then conclude that
\[
\Sigma_n(\theta)\geq -\frac{\|\theta\|_{L^\infty(M)}}{\cE_n}\big(C + \cE_n^{\frac12}\big)\to 0
\]
as $n\to\infty$ because $\cE_n$ is assumed to be an unbounded sequence, and therefore $\Sigma_\infty(\theta)\geq0$. As we argued before, this implies that $\Sigma_\infty$ is a foliation current and being closed (by Step~$1$) we deduce that it is a foliation cycle, as we wanted to show.

\subsubsection*{Step 3: $\Sigma_{\infty} \in \overline{\cF_X}$}

Let $(r, \tilde\psi_r,\tilde A_r)$ be the 1-parameter family of solutions, whose existence in ensured by Proposition \ref{smoothex}, which coincides with $(r_n, \psi_n,A_n)$ at $r=r_n$. Observe that, since $\cE_n$ is unbounded, we can write
\begin{equation}\label{funs}
\Sigma_{\infty}(\gamma)= \lim_{n \rightarrow \infty} \frac{\fe(r_n)}{\cE_n}\,,
\end{equation}
so $\Sigma_{\infty}\in \overline{\cF_{X}}$ if and only if
\[
\lim_{n \rightarrow \infty} \frac{\fe(r_n)}{\cE_n}=0\,.
\]
Since $\Sigma_{\infty}(\gamma) \leq 0$ by Taubes's Theorem~\ref{T.taubes}, it is enough to prove that there is no constant $C>0$ such that
\begin{equation}\label{Eq.fe}
-\fe(r_n) \geq C \cE_n\,.
\end{equation}

The following lemma is key in what follows.

\begin{lemma}\label{fbounds}
For $r$ large enough, there is a constant $C$ independent of $r$ such that
\[
|\fcs(r)| \leq C r^{\frac{2}{3}} \cE_{r}^{\frac{4}{3}}
\]
\[
|\fe_{\nu}(r)| \leq C \cE_{r}
\]
\end{lemma}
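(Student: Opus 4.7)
The plan is to prove both bounds by combining Taubes's a priori estimates from Lemma~\ref{L.TaubesEstimates} on the spinor $\psi_r=(\alpha_r,\beta_r)$, which give control of $F_{A_r}$ in $L^1$ and $L^2$, with a standard elliptic gauge-fixing argument for the globally defined 1-form $a_r=A_r-A_1$.

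First I would establish the curvature estimates
\[
\|F_{A_r}\|_{L^1(M)} \leq C(\cE_r + 1), \qquad \|F_{A_r}\|_{L^2(M)}^{2} \leq C\,r(\cE_r+1).
\]
These follow from the Seiberg--Witten equation $*F_{A_r}=r(\lambda-\psi_r^\dagger\sigma\psi_r)+\bom$ by splitting $\lambda-\psi_r^\dagger\sigma\psi_r$ into its $X$-component, equal to $1-|\alpha_r|^2+|\beta_r|^2$, and its perpendicular component, whose pointwise norm is bounded by $2|\alpha_r||\beta_r|$. The bounds $r(1-|\alpha_r|^2)\geq -C$ and $|\beta_r|^2\leq C(|1-|\alpha_r|^2|+r^{-1})/r$ control the first piece, while Cauchy--Schwarz handles the cross term $r\int_M|\alpha_r||\beta_r|\,\mu$ in terms of $\cE_r$. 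The bound on $\fe_\nu$ is then immediate: since $da_r=F_{A_r}-F_{A_1}$, we may write $\fe_\nu = \int_M(\Gamma+\bom')\wedge F_{A_r} + O(1)$, the $O(1)$ arising from the $r$-independent term $\int_M(\Gamma+\bom')\wedge F_{A_1}$. By H\"older's inequality, $|\fe_\nu|\leq \|\Gamma+\bom'\|_{L^\infty}\|F_{A_r}\|_{L^1}+O(1)\leq C(\cE_r+1)$, which absorbs into $C\cE_r$ as soon as $\cE_r\geq 1$.

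For the Chern--Simons bound I would first observe that $\fcs$ is gauge-invariant. Since $A_r$ and $A_1$ are connections on the same line bundle $E$, the class $[da_r]=[F_{A_r}-F_{A_1}]$ vanishes in $H^2(M;\RR)$ (in fact, $a_r$ is itself a global primitive of $da_r$), and a short Stokes computation then shows that $\int_M a_r\wedge da_r$ is unchanged under both small gauge transformations $a_r\to a_r+df$ and large gauge transformations, the integer-periods part of the added closed 1-form pairing against $[da_r]=0$. This lets me place $a_r$ in Coulomb gauge $d^{*}a_r=0$, with its harmonic component in a bounded fundamental domain of $H^1(M;\RR)/H^1(M;\ZZ)$. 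Standard elliptic estimates then give $\|a_r\|_{W^{1,p}}\leq C(1+\|da_r\|_{L^p})$ for $1<p<\infty$, and the Sobolev embedding $W^{1,3/2}(M)\hookrightarrow L^3(M)$ yields $\|a_r\|_{L^3}\leq C(1+\|da_r\|_{L^{3/2}})$.

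With these tools in place, Riesz--Thorin interpolation between the $L^1$ and $L^2$ bounds on $da_r$ gives
\[
\|da_r\|_{L^{3/2}} \leq \|da_r\|_{L^1}^{1/3}\|da_r\|_{L^2}^{2/3} \leq C\,r^{1/3}(\cE_r+1)^{2/3},
\]
and H\"older then delivers
\[
|\fcs| \leq \|a_r\|_{L^3}\|da_r\|_{L^{3/2}} \leq C\|da_r\|_{L^{3/2}}^{2} \leq C\,r^{2/3}(\cE_r+1)^{4/3},
\]
which for $\cE_r\geq 1$ collapses to the claimed bound. The principal hurdle will be the gauge-fixing step: one must verify not only the invariance of $\fcs$ under large gauge transformations (which relies crucially on $[da_r]=0$), but also that after reducing the harmonic part of $a_r$ to a bounded fundamental domain of $H^1(M;\RR)/H^1(M;\ZZ)$ one obtains a uniform bound on the harmonic component itself, so that the Coulomb gauge elliptic estimate genuinely controls the full $L^3$ norm of $a_r$ rather than only its projection onto the orthogonal complement of the harmonic 1-forms.
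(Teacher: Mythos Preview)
Your proposal is correct and essentially reconstructs the argument the paper delegates to \cite[Lemma~4.3]{T09}: $L^1$ and $L^2$ curvature bounds from the a priori estimates, Coulomb gauge, the critical Sobolev embedding $W^{1,3/2}\hookrightarrow L^3$, and $L^{3/2}$ interpolation yield the $\fcs$ bound, while the $\fe_\nu$ bound is the $L^1$ curvature estimate in disguise (this is exactly the paper's one-line proof via $|\Sigma_r(\Gamma+\bom')|\leq C$). One simplification to your ``principal hurdle'': since $da_r=F_{A_r}-F_{A_1}$ is globally \emph{exact}, Stokes gives $\int_M \eta\wedge da_r=0$ for \emph{every} closed $\eta$, so $\fcs$ is unchanged when you subtract the full harmonic part of $a_r$ outright---there is no need to invoke large gauge transformations or reduce to a fundamental domain of $H^1(M;\RR)/H^1(M;\ZZ)$.
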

\begin{proof}
The first bound on the $\fcs$ functional follows from the proof of~\cite[Lemma~4.3]{T09} and the assumption that $\cE_r$ is unbounded. As for the second bound, we notice that
\[
\frac{|\fe_{\nu}(r)|}{\cE_r}=|\Sigma_{r}(\Gamma+\bom')|+O(\cE^{-1}_r)
\]
and clearly $|\Sigma_{r}(\Gamma+\bom')|\leq C$ for some constant $C$.
\end{proof}

To show that there is no constant $C>0$ for which Equation~\eqref{Eq.fe} holds, let us assume the contrary. Set $\fb(r):={-2\fa_{0}(r)}/{r}$. By Proposition \ref{smoothex}, the functions $\fb(r)$ and $\fa_{0}(r)$ are differentiable on the intervals $I_k:=(\rho_k, \rho_{k+1})$. Moreover, it is straightforward to check that they verify the relation (cf.~\cite[Section~4]{T09}):
\[
\frac{d\fb}{dr} =\frac{\fcs}{r^2}-\frac{2 \fe_{\nu}}{r^2}\,,
\]
so, in view of Lemma~\ref{fbounds}, we obtain the bound
\[
\Big|\frac{d\fb}{dr}\Big| \leq C \Big(\frac{\cE_{r}}{r}\Big)^{\frac{4}{3}}
\]
on $I_k$, from which we deduce that
\[
|\fb(r)| \leq |\fb(r_0)|+\int_{r_0}^{r}  \Big(\frac{\cE_{\rho}}{\rho}\Big)^{\frac{4}{3}} d \rho
\]
and hence
\begin{equation}\label{fa1}
|\fa_{0}(r)| \leq \frac12r|\fb(r_0)|+\frac12r\int_{r_0}^{r}  \Big(\frac{\cE_{\rho}}{\rho}\Big)^{\frac{4}{3}} d \rho\,.
\end{equation}
Since $\fa_0$ is a continuous function, the inequality~\eqref{fa1} holds for all $r\geq r_0$ with $r,r_0\in[1,\infty)$.

Next, the definition of the functional $\fa(r)$ (notice that $D_{A_r}\psi_r=0$ for solutions of the Seiberg--Witten equations), Lemma~\ref{fbounds}, and our assumption that $-\fe(r_n) \geq C \cE_n$, allow us to write the bound
\[
|\fa_{0}(r_n)| \geq C \bigg(r_n \cE_n-\cE_n-r_n^{\frac{2}{3}} \cE_{n}^{\frac{4}{3}}\bigg) \,.
\]
Since we are assuming that the energy growth is bounded as $\cE_n\leq Cr_n^\theta$ for some $\theta<1$, we easily infer that $ \limsup_{n\to\infty} r_{n}^{\frac{2}{3}} \cE^{\frac{4}{3}}_{n}(r_n \cE_n)^{-1}=0$, and therefore, for large enough $r_n$, the previous bound implies
\begin{equation}\label{fa2}
|\fa_{0}(r_n)| \geq C r_n \cE_n\,.
\end{equation}

Accordingly, Equations~\eqref{fa1} and~\eqref{fa2} hold simultaneously, if
\begin{equation}\label{ineq}
\cE_n \leq C \int_{r_0}^{r_n} \Big(\frac{\cE_{\rho}}{\rho}\Big)^{\frac{4}{3}} d \rho+C
\end{equation}
as $n \rightarrow \infty$. Finally, combining this estimate with the assumption~\eqref{eq.assum}, we derive that
\[
r_n^\theta\leq C(1+r_n^{\frac{4\theta-1}{3}})
\]
provided that $\theta\neq\frac14$, and
\[
r_n^{\frac14}\leq C\ln r_n
\]
when $\theta=\frac14$, for all $n$ large enough. This yields a contradiction with the fact that $\theta\in (0,1)$. We then conclude that there is no constant $C>0$ for which Equation~\eqref{Eq.fe} holds, and hence $\lim_{n\to\infty} \frac{\fe(r_n)}{\cE_n}=0$, which completes the proof of the theorem.

\bibliographystyle{amsplain}

\end{document}